\newtheorem{theorem}{Theorem}[section]
\theoremstyle{definition}
\newtheorem{proposition}[theorem]{Proposition}
\newtheorem{lemma}[theorem]{Lemma}
\newtheorem{definition}[theorem]{Definition}
\newtheorem{remark}[theorem]{Remark}
\newtheorem{corollary}[theorem]{Corollary}
\newtheorem{assumption}[theorem]{Assumption}
\definecolor{burgundy}{rgb}{0.5, 0.0, 0.13}
\definecolor{carmine}{rgb}{0.59, 0.0, 0.09}
\definecolor{bostonuniversityred}{rgb}{0.8, 0.0, 0.0}
\definecolor{alizarin}{rgb}{0.82, 0.1, 0.26}
\definecolor{amber}{rgb}{0.55, 0.71, 0.0}
 	\definecolor{aqua}{rgb}{0.0, 1.0, 1.0}
\definecolor{asparagus}{rgb}{0.53, 0.66, 0.42}
\definecolor{burntorange}{rgb}{0.8, 0.33, 0.0}
	\definecolor{byzantium}{rgb}{0.44, 0.16, 0.39}
 \newcommand{\changed}[1]{{#1}}
  \newcommand{\frankchanged}[1]{{#1}}
   \newcommand{\newfrankchanged}[1]{{#1}}
\def\BB{\widetilde{B}}
\def\be{\begin{equation}}
\def\ee{\end{equation}}
\def\+{\,+\,}
\def\m{\,-\,}
\def\={\;=\;}
\def\mod{\;\text{\rm mod}\;}
\def\CC{\widetilde{C}}
\def\ZZ{\widetilde{Z}}
\def\AA{\widetilde{A}}
\def\Q{\mathbf Q}   %  \def\Q{\mathbb Q}    % \def\Q{\mathbf{Q}}
\def\Z{\mathbf Z}   %  \def\Z{\mathbb Z}    % \def\Z{\mathbf{Z}}
\def\N{\mathbf N}   %  \def\N{\mathbb N}    % \def\N{\mathbf N}
\def\C{\mathbf C}   %  \def\C{\mathbb C}    % \def\C{\mathbf C}  
\def\R{\mathbf R}   %  \def\R{\mathbb R}    % \def\R{\mathbf R}  
\def\F{\mathbf F}   %  \def\F{\mathbb F}    % \def\F{\mathbf F}
\def\X{X}   % \def\X{\mathbf X}
\def\ii{i}   
\def\s{\sigma} \def\a{\alpha}  \def\g{\gamma}   \def\z{\zeta}  \def\i{^{-1}}
\def\sm#1#2#3#4{\bigl(\begin{smallmatrix}#1&#2\\#3&#4\end{smallmatrix}\bigr)}
\def\wed{\small{\bigwedge}^2}
\def\wed{\mbox{\small$\bigwedge^2$}}
 \def\ve{\varepsilon}
\def\tV{\widetilde V}   \def\tV{v}  % must decide
\def\wH{\widetilde{H}}
\def\wF{\widetilde{F}}
\def\tf{\widetilde f}
\def\BZ{\mathbf Z}
\def\BP{\mathbf P}
\def\BQ{\mathbf Q}
\def\BR{\mathbf R}
\def\BC{\mathbf C}
\def\BZ{\Z}
\def\BQ{\Q}
\def\BR{\R}
\def\BC{\C}
\def\calO{\mathcal O}
\def\P{\BP}
\def\D{\Delta}
\def\G{\Gamma}
\def\L{\mathrm L}
\def\la{\langle}
\def\ra{\rangle}
\def\ep{\epsilon}
\def\SL{\mathrm{SL}}
\def\CS{\mathrm{CS}}
\def\Vol{\mathrm{V}}
\def\th{\theta}
\def\ssm{\smallsetminus}
\def\map{\;\longrightarrow\;}
\def\nn{n}
\def\ll{m}
\def\Phih{\widehat{\Phi}}
\def\e{\mathbf{e}}
\def\Li{\mathrm{Li}}
\def\r{\mathfrak{r}}
\def\Frob{\mathrm{Frob}}
\def\wD{D}  % \widetilde D no longer used; D_\z(1) has been dropped
\def\wF{\widetilde{F}}
\def\QQ{\mathfrak{Q}}
\def\BGL{\mathrm{BGL}}
\def\Tor{\mathrm{Tor}}
\def\SL{\mathrm{SL}}
\def\GL{\mathrm{GL}}
\def\PGL{\mathrm{PGL}}
\def\PSL{\mathrm{PSL}}
\def\q{\mathfrak{q}}
\def\eps{\epsilon}
\def\Pic{\mathrm{Pic}}
\def\et{\text{\'{e}t}}
\def\fet{\text{\emph{\'{e}t}}}
\def\ur{\mathrm{ur}}
\def\Qbar{\overline{\Q}}
\def\Gal{\mathrm{Gal}}
\def\OL{\mathcal{O}}
\def\Hom{\mathrm{Hom}}
\def\Fgal{\widetilde{F}}
\def\ww{\widetilde w_F}
\def\Aut{\mathrm{Aut}}
\def\Tsim{\;\stackrel T\sim\;}
\def\Ssim{\;\stackrel S\sim\;}
\def\G{\Gamma} 
\def\J{\mathbf J}
\begin{document}

%%%%%%%%%%%%%%%%%%%%%%{page1}

% OLDER TITLES: 
% \title{Explicit Chern classes, the quantum dilogarithm, and the Bloch group}
% \title{Bloch groups, algebraic $K$-theory, and units}
\title[Bloch groups, algebraic $K$-theory, units, and Nahm's Conjecture]{
Bloch groups, algebraic $K$-theory, \\ units, and Nahm's Conjecture}
\author{Frank Calegari}
\address{Department of Mathematics \\
        University of Chicago \\
%        5734 S. University Avenue \\
        Chicago, IL 60637, USA \newline
        {\tt \url{http://math.uchicago.edu/~fcale}}
        }
\email{fcale@math.uchicago.edu}
\author{Stavros Garoufalidis}
\address{% Max Planck Institute for Mathematics \\
         % Vivatsgasse 7, 53111 Bonn, GERMANY \newline
  International Center for Mathematics, Department of Mathematics \\
  Southern University of Science and Technology \\
  Shenzhen, China \newline
  {\tt \url{http://people.mpim-bonn.mpg.de/stavros}}}
\email{stavros@mpim-bonn.mpg.de}
\author{Don Zagier}
\address{Max Planck Institute for Mathematics \\
  Bonn, Germany and
International Centre for Theoretical Physics \\ Trieste, Italy
  \newline
         {\tt \url{http://people.mpim-bonn.mpg.de/zagier}}
       }
\email{dbz@mpim-bonn.mpg.de}
%\address{International Centre for Theoretical Physics \\ Strada Costiera 11
%34151 Trieste, Italy}       
%\thanks{F.C. was supported in part by NSF  Grants
%  DMS-1404620, DMS-1701703, and DMS-2001097.}
%F.C. and S.G. were supported in part by NSF. %\\
%\newline
%1991 {\em Mathematics Classification.} Primary 57N10. Secondary 57M25.
%\newline
%{\em Key words and phrases: Chern classes, algebraic K-theory, $K_3$, Bloch
%group, quantum dilogarithm, cyclic quantum dilogarithm.
%}
%}
%
%\subjclass[2010]{57N10, 57M25.}

\begin{abstract}
Given an element of the Bloch group of a number field~$F$ and a natural 
number~$\nn$, we construct an explicit unit in the field    
$F_\nn=F(e^{2 \pi i/\nn})$, well-defined up to $\nn$-th powers of nonzero elements 
of~$F_\nn$.  The construction uses the cyclic quantum dilogarithm, and under 
the identification of the Bloch group of~$F$ with the $K$-group  $K_3(F)$
gives \changed{(up to an unidentified invertible scalar)} 
a \changed{formula} for a certain abstract Chern class from~$K_3(F)$. 
The units we define are conjectured to coincide with numbers appearing in the 
quantum modularity conjecture for the Kashaev invariant of knots (which was 
the original motivation for our investigation), and also appear in 
the radial asymptotics of Nahm sums near roots of unity. This latter 
connection is used to prove Nahm's conjecture relating the 
modularity of certain $q$-hypergeometric series to the vanishing of the
associated elements in the Bloch group of~$\Qbar$.  
\end{abstract}

\maketitle

{\footnotesize
\tableofcontents
}

%%%%%%%%%%%%%%%%%%%%%%%%%%%%%%%%%%%%%%%%%%%%%%%%%%%%%%%%%%%%%%%%%%%%%%%%%%%%
%%%%%%%%%%%%%%%%%%%%%%%%%%%%%%%%%%%%%%%%%%%%%%%%%%%%%%%%%%%%%%%%%%%%%%%%%%%%

\section{Introduction}
\label{sec.intro}

The purpose of the paper is to associate to an element $\xi$ of the Bloch 
group of a number field $F$ and a primitive $\nn$th root of unity~$\z$ an 
explicit \changed{$S$-unit (where~$S$ is independent of~$\z$ and can often 
be taken to be trivial)} $R_\z(\xi)$ in the \changed{cyclotomic extension}
$F_\nn=F(\z)$, well-defined up to $\nn$-th powers of nonzero elements of $F_\nn$.
Our construction uses the cyclic quantum dilogarithm and is shown to
\changed{agree, up to an unidentified invertible scalar, with the abstract Chern 
class map on $K_3(F)$ if the latter is identified with the Bloch group.}
The \changed{$S$-unit} is \changed{also} conjectured (and checked numerically
in many cases) 
to coincide with a specific number that appears in the Quantum Modularity
Conjecture of the Kashaev invariant of a knot~\changed{\cite{Za:QMF}}. This was
in fact the starting point of our investigation\changed{, as described
in detail in~\cite{GZ:quantum} and in section~\ref{sec.motivation} below}.  

As a surprising consequence of our main theorem we were able to prove
\changed{a famous conjecture of Werner Nahm asserting} that the
modularity of certain $q$-hypergeometric series (``Nahm sums'') implies
the vanishing of certain explicit elements in the Bloch group of~$\Qbar$.
A precise statement will be given in Section~\ref{sub.intro.nahm} of this
introduction.

\subsection{Bloch groups and associated units}
\label{sub.intro.bloch} 

We first recall the definition of the classical Bloch group, as introduced 
in~\cite{Bloch}. Let~$Z(F)$ denote the free abelian group on 
$\P^1(F)=F\cup\{\infty\}$, i.e.~the group of 
formal finite combinations  $\xi=\sum_i n_i [X_i]$ with 
$n_i\in\Z$ and $X_i\in\P^1(F)$.

\begin{definition}
\label{def.BF} 
The {\it Bloch group} of a field $F$ is the quotient 
\be
\label{defBF}
B(F)=A(F)/C(F)\,, 
\ee
where $A(F)$ is the kernel of the map 
%\be \label{defd}
% d:\,Z(F)\map \wed F^{\times}\,, \qquad [X]\;\mapsto\;
%   \begin{cases} (X) \wedge (1-X) &\text{ if $X\not\in\{0,1,\infty\}$,} \\
%   0 &\text{ if $X\in\{0,1,\infty\}$,} \end{cases} \ee
\be
\label{defd}
d:\,Z(F)\map \wed F^{\times}\,, \qquad [X]\;\mapsto\; (X) \wedge (1-X) 
\ee
(and $[0],\,[1],\,[\infty]\mapsto0$) and $C(F)\subseteq A(F)$ the group 
generated by the {\it five-term relation}
\be
\label{5term}
\xi_{X,Y} \= [X] \m [Y] \+  \biggl[ \frac{Y}{X}\biggr] 
\m \biggl[\frac{1-X^{-1}}{1-Y^{-1}}\biggr] \+ \biggl[\frac{1-X}{1-Y} \biggr]
\ee
with $X$ and $Y$ ranging over~$\P^1(F)$ (but forbidding arguments 
$\frac00$ or $\frac\infty\infty$ on the right-hand side). 
% \changed{We always have the relations $[1]=3\,[0]=[x]+[1/x]=[x]+[1-x]-[0]$ in~$B(F)$.}
%  
\changed{We remark that there are a number of different definitions of the Bloch
  group in the literature which usually agree up to~$6$-torsion. With our convention, 
 the relations $[1]=3\,[0]=[X]+[1/X]=[X]+[1-X]-[0]=0$ hold in~$B(F)$ for all~$X$.}
 \frankchanged{Our Bloch group is a quotient of Suslin's Bloch group~$\BB(F)$
 by a subgroup of exponent~$2$ (see Lemma~\ref{suslincomparison}).}
% 
% \changed{Note that $[1]=3\,[0]=0$ in $B(F)$ with this definition.}
% 
% \changed{Note that, with this definition, [1]~is trivial (take $X=Y$ in~\eqref{5term}) 
% but [0] is a (usually non-zero) 3-torsion element of $B(F)$.}
\end{definition}

\smallskip

In this paper, we will study an invariant of the Bloch group whose
values are units in $F_\nn$ modulo $\nn$th~powers of units, where $\nn$ 
is a natural number and $F_\nn$ the field  obtained by adjoining to $F$ 
a primitive $\nn$-th root of unity~$\z=\z_\nn$. The extension~$F_\nn/F$ is 
Galois with Galois group $G=\Gal(F_\nn/F)$, and $G$~admits a canonical map
\be
\label{eq.chi}
\chi:\,G \map(\Z/n \Z)^{\times}
\ee
determined by $\sigma\z = \z^{\chi(\sigma)}$. The powers 
$\chi^j$ ($j\in\Z/\nn\Z$) of this character define eigenspaces 
$\bigl(F_\nn^{\times}/F_\nn^{\times \nn}\bigr)^{\chi^j}$ in the obvious way as the 
set of $x\in F_\nn^{\times}/F_\nn^{\times \nn}$ such that $\s(x)=x^{\chi^j(\s)}$ for 
all~$\s\in G$, and similarly for $(\OL_\nn^{\times}/\OL_\nn^{\times \nn})^{\chi^j}$ 
or $(\OL_{S,\nn}^{\times}/\OL_{S,\nn}^{\times \nn})^{\chi^j}$, where 
$\OL_\nn$ (resp.~$\OL_{S,\nn}$) is the ring of integers (resp.~$S$-integers)
of~$F_\nn$. Our main result is the following theorem.

\begin{theorem}
\label{thm.R} 
Suppose that~$F$ does not contain any non-trivial $n$th root of unity. 
Then there is a %canonical 
\changed{map}
\be
\label{eq.Rmap}  
R_\z:\, B(F)/\nn B(F)\map 
\bigl(\OL_{S,\nn}^{\times}/\OL_{S,\nn}^{\times \nn}\bigr)^{\chi\i}
\;\subset\;\bigl(F_\nn^{\times}/F_\nn^{\times \nn}\bigr)^{\chi\i}
\ee
for some finite set~$S$ of primes depending only on~$F$.  If $\nn$ is prime 
to a certain integer $M_F$ depending on~$F$, then the map $R_\z$ is injective 
and its image is contained 
in~$\bigl(\OL_\nn^{\times}/\OL_\nn^{\times \nn}\bigr)^{\chi\i}$,
and equal to this if~$\nn$ is prime.
\end{theorem}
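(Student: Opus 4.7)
\medskip

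\noindent\textbf{Proof plan for Theorem 1.2.} The plan is to first build $R_\z$ by hand on the free group $Z(F)$, using the cyclic quantum dilogarithm, and then verify in succession: (i) Galois equivariance into the correct eigenspace, (ii) vanishing on the five-term relations, (iii) descent to $B(F)/nB(F)$, and finally (iv) the injectivity/surjectivity statements by comparison with a Chern class map on $K_3(F)$.

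\medskip

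\noindent\emph{Step 1: Construction on $Z(F)$.} For $X\in F\setminus\{0,1\}$, set
\[
\wD_\z(X) \= \prod_{k=1}^{\nn-1}(1-\z^k X)^{k}\,,
\]
and define $R_\z([X])\in F_\nn^\times/F_\nn^{\times \nn}$ to be the class of an $\nn$-th root of $\wD_\z(X)$ (the class does not depend on the choice of root). Equivalently, one views $R_\z([X])$ as the class of the cyclic quantum dilogarithm value. A direct computation using the substitution $k\mapsto\chi(\s)k$ in the product shows that $\s(\wD_\z(X))\equiv\wD_\z(X)^{\chi(\s)\i}$ modulo $F_\nn^{\times \nn}$, so $R_\z([X])\in(F_\nn^\times/F_\nn^{\times \nn})^{\chi\i}$.

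\medskip

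\noindent\emph{Step 2: Integrality and the set $S$.} For $v\nmid \nn$ a finite place of $F_\nn$ with $v(X)\ge 0$ and $v(1-X)\ge 0$, each factor $1-\z^k X$ is a unit at $v$ unless $v$ divides the resultant of $T^\nn-1$ with $(1-X)$-like expressions. Controlling this gives a finite set $S=S(F,\nn)$ of primes (depending only on $F$ and on $\nn$), outside of which the class lies in $\OL_{S,\nn}^\times/\OL_{S,\nn}^{\times \nn}$. When $\nn$ is coprime to a suitable $M_F$, one can further exclude all primes outside those above $\nn$, and ramification/cyclotomic arguments then show the image consists of global units.

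\medskip

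\noindent\emph{Step 3: The five-term relation.} This is the main computational step. I would prove that for all admissible $(X,Y)\in F\times F$,
\[
R_\z\bigl(\xi_{X,Y}\bigr)\;\equiv\;1\pmod{F_\nn^{\times \nn}}\,,
\]
by invoking the Faddeev--Kashaev pentagon identity for the cyclic quantum dilogarithm, which in its classical specialization reads as a product identity between the five terms on the right-hand side of \eqref{5term} up to an explicit correction that is itself an $\nn$-th power. The pentagon identity is the cyclic analogue of the Rogers five-term relation, and one must verify carefully that the correction factor (typically involving powers of $\z$ and Gauss-sum-like quantities) lies in $F_\nn^{\times \nn}$. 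Together with Step 1, this shows $R_\z$ descends to a map $A(F)/(C(F)+\nn Z(F))$, hence to $B(F)/\nn B(F)$. I expect this step to be the main technical obstacle.

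\medskip

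\noindent\emph{Step 4: Injectivity and the image.} Under Suslin's theorem one identifies $B(F)\otimes\BZ[1/M_F]$ with $K_3^{\mathrm{ind}}(F)\otimes\BZ[1/M_F]$ for a suitable integer $M_F$ depending on $F$ (absorbing $2$-torsion and the order of the Milnor $K_2$ contribution). The strategy is to identify $R_\z$ with (a twist of) the mod-$\nn$ étale Chern class
\[
c_{2,2}:\,K_3^{\mathrm{ind}}(F)/\nn\;\longrightarrow\;H^1_{\et}(\OL_{S,\nn},\mu_\nn^{\otimes 2})\,,
\]
which via Kummer theory and taking $\chi\i$-isotypic part lands exactly in $(\OL_{S,\nn}^\times/\OL_{S,\nn}^{\times \nn})^{\chi\i}$. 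Injectivity of $c_{2,2}$ (for $\nn$ prime to $M_F$) is known by work of Soulé/Dwyer--Friedlander; combined with the identification, this yields injectivity of $R_\z$. For $\nn$ prime, one compares $\BF_\nn$-dimensions: $\dim_{\BF_\nn}\bigl(\OL_\nn^\times/\OL_\nn^{\times \nn}\bigr)^{\chi\i}$ is computed by Dirichlet's unit theorem plus a character computation, and matches $\dim_{\BF_\nn}B(F)/\nn$ by the Borel regulator, giving surjectivity for dimension reasons.

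\medskip

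The hard part is Step 3, the five-term identity modulo $\nn$-th powers, because matching the phase corrections in the pentagon identity with elements of $F_\nn^{\times \nn}$ requires a careful global tracking of cyclotomic units and Gauss-sum factors. The second most delicate point is the precise identification in Step 4 of $R_\z$ with the étale Chern class, since although both maps land in the same group, identifying them canonically (rather than up to an unknown scalar) requires either a syntomic/motivic construction of the Chern class or a computation at a sufficiently rich test element.
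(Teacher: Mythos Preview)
Your overall architecture (construct via cyclic quantum dilogarithm, check five-term, compare to a Chern class) matches the paper's, but Step~1 contains a genuine error that breaks the construction. You set $\wD_\z(X)=\prod_{k=1}^{\nn-1}(1-\z^kX)^k$ with the argument~$X$ itself, and then propose to take an $\nn$-th root of this and view its class in $F_\nn^\times/F_\nn^{\times\nn}$. But an $\nn$-th root of a generic element of $F_\nn^\times$ does not lie in $F_\nn^\times$, so this does not define an element of $F_\nn^\times/F_\nn^{\times\nn}$ at all. The paper's construction is different: one applies $D_\z$ to an $\nn$-th root $x$ of~$X$, i.e.\ $P_\z(X)=D_\z(x)=\prod_k(1-\z^kx)^k$, and this lives in $H^\times/H^{\times\nn}$ where $H=F_\nn(\{X^{1/\nn}\})$ is the universal Kummer extension. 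The resulting map $P_\z$ on $Z(F)$ genuinely takes values in the larger group $H^\times/H^{\times\nn}$, not in $F_\nn^\times/F_\nn^{\times\nn}$.

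The descent from $H$ to $F_\nn$ is the missing idea. It is \emph{not} available term-by-term: for a single $[X]$ there is an obstruction in $H^2(\Phi,\mu)$ (with $\Phi=\Gal(H/F_\nn)$), and the paper shows via an explicit cocycle computation that this obstruction equals the cup product $X\cup(1-X)$ under the Kummer isomorphism $F^\times/F^{\times\nn}\simeq H^1(\Phi,\mu)$. Hence $P_\z(\xi)$ descends to $F_\nn^\times/F_\nn^{\times\nn}$ only for $\xi\in A(F;\Z/\nn\Z)$, i.e.\ when $d(\xi)=0$ in $\wedge^2(F^\times/F^{\times\nn})$. Even then the lift is not unique; a further argument using Sah's lemma and the hypothesis $(\nn,w_F)=1$ is needed to pin down a unique lift in the $\chi^{-1}$-eigenspace. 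Your Step~1 skips this entire mechanism, and consequently your Step~2 integrality discussion (which treats $R_\z([X])$ as already defined for a single~$X$) has no object to analyze. Once Step~1 is repaired along these lines, your Steps~3 and~4 are broadly in line with the paper (the five-term relation via the Kashaev--Mangazeev--Stroganov identity, and injectivity/surjectivity via comparison with the \'etale Chern class and a dimension count), though note that the paper only establishes $R_\z=c_\z^\gamma$ for some $\gamma\in(\Z/\nn\Z)^\times$, not a canonical identification.
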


\changed{The map~$R_{\z}$ satisfies various natural compatibilities as one varies
either~$\nn$ or the field~$F$; see~Lemmas~\ref{lemma:compatibility}
and~\ref{lemma:compatibility2}.}

\begin{remark}
Note that the field $F_\nn$ and the character $\chi$ of~\eqref{eq.chi} \changed{depend
only on $\nn$ and not on the primitive} $\nn$th root of unity $\z$. The map $R_\z$ 
from $B(F)$ to $F_\nn^{\times}/F_\nn^{\times\nn}$ does depend on~$\z$, but in a 
very simple way, described by either of the formulas
\be
\label{eq.2actions}
\s\bigl(R_\z(\xi)\bigr)\,=\,R_{\s(\z)}(\xi)\;\quad(\s\in G),\qquad
R_\z(\xi) \,=\,R_{\z^k}(\xi)^k \quad\;(k\in(\Z/\nn\Z)^\times)\,, 
\ee
where the simultaneous validity of these two formulas explains why the image 
of each map~$R_\z$ lies in the $\chi\i$~eigenspace of 
$F_\nn^{\times}/F_\nn^{\times\nn}$.
\end{remark}

\begin{remark} \label{quotableremark}
The optimal definition of~$M_F$ is somewhat complicated to state. However, one may
take it to be~$6\,\Delta_F\,|K_2(\OL_F)|$. \changed{When~$n$ is not
divisible by~$9$, one may take~$M_F$ to be~$2 \,\Delta_F\,|K_2(\OL_F)|$.
(Both assertions are proved in~\S\ref{sub.thm.c}.)}
%\changed{It is not always
%necessary to avoid the prime~$p = 3$. 
%For example, when~$F = \Q$, the map~$R_{\zeta_3}: B(\Q)/3 B(\Q)
%\rightarrow \mu_3 \simeq  \left(\Z[\zeta_3]^{\times}/\Z[\zeta_3]^{\times 3}\right)^{\chi^{-1}}$
%is an isomorphism which sends~$[0]$ to~$\zeta^{-1}_3$. More generally,
%when~$n$ is prime or square-free, one may take~$M_F$ to be~$2 \,\Delta_F\,|K_2(\OL_F)|$.}
%
%\changed{(We have to avoid~3 because $[0]$, which with our 
%definition is a usually non-trivial 3-torsion element of $B(F)$,
%is always in the kernel of the map~\eqref{eq.Rmap}.)}
\end{remark}

The detailed construction of the map $R_\z$ will be given in Section~2. A 
rough description is as follows.  Let $\xi=\sum n_i[X_i]$ be an element of 
$Z(F)$ whose image in $\wedge^2(F^\times/F^{\times \nn})$ under the map induced 
by~$d$ vanishes. We define an algebraic number $P_\z(\xi)$ by the formula 
\be 
\label{defRz}
 P_\z(\xi) \= \prod_i \changed{\frac{D_\z(x_i)^{n_i}}{D_{\z}(1)^{n_i}}} \,, 
\ee
where $x_i$ is some $\nn$th root of $X_i$ and $D_\z(x)$ is the 
{\it cyclic quantum dilogarithm function} 
\be
\label{defDz}
 D_\z(x) \= \prod_{k=1}^{\nn-1}(1\m \z^k x)^k\,.  
\ee
The number $P_\z(\xi)$ belongs to the Kummer extension $H_\xi$ of $F$ defined 
by adjoining all of the $x_i$ to~$F_\nn$ and is well-defined modulo~$H_\xi^\nn$. 
We show that for $\nn$ prime to some~$M_F$ it has the form $ab^{\nn}$ with $b$ 
in~$H_\xi^\times$ and $a\in F_\nn^\times$ (or even $a\in\OL^{\times}_\nn$ under a
sufficiently strong coprimality assumption about~$\nn$).
Then $R_\z(\xi)$ is defined as the image of $a$ modulo~$\nn$th powers.

\subsection{Algebraic \texorpdfstring{$K$}{K}-groups and associated units}
\label{sub.intro.Ktheory} 

A second main theme of the paper concerns the relation to the algebraic 
$K$-theory of fields. The group $B(F)$ was introduced by Bloch as a concrete 
model for the abstract \hbox{$K$-group} $K_3(F)$. It was proved by 
Suslin~\cite{Suslintwo} that, if $F$ is a number field, then 
(up to \hbox{2-torsion}) $K_3(F)$ is an extension of~$B(F)$ by the roots 
of unity in~$F$, and in this case one also knows by results of
Borel and Suslin--Merkurjev~\cite{SuslinTorsion}, \cite{Suslin}, \cite{Weibel} 
that $K_3(F)$ has the structure
\be
\label{K3F} 
K_3(F) \;\cong\; \Z^{r_2(F)}\;\oplus\quad 
\begin{cases} \Z/w_2(F)\Z & \text{if $r_1(F)=0$,}
\\ \Z/2w_2(F)\Z \,\oplus\,(\Z/2\Z)^{r_1(F)-1}& \text{if $r_1(F)\ge1$,}
\end{cases}
\ee
where $(r_1(F),r_2(F))$ is the signature of $F$ and $w_2(F)$ is the integer
\be 
\label{eq.w2F} 
w_2(F) \=  2\,\prod_pp^{\nu_p}\,,\qquad  
\nu_p \,:=\:\max\bigl\{\nu\in\Z\mid\z_{p^\nu}+\z_{p^\nu}\i\in F\bigr\}\,. 
\ee
For a detailed introduction to the algebraic $K$-theory of number 
fields, see~\cite{Weibel}.

Theorem~\ref{thm.R} is then a companion of the following result \changed{
expressed in terms of $K_3(F)$ rather than the Bloch group $B(F)$}:

\begin{theorem}
\label{thm.c}
Let $F$ be a number field. Then there is a canonical map 
\be
\label{eq.cmap}  
c_\z:\, K_3(F)/\nn K_3(F)\map 
\bigl(\OL_{S,\nn}^{\times}/\OL_{S,\nn}^{\times \nn}\bigr)^{\chi\i}
\;\subset\;\bigl(F_\nn^{\times}/F_\nn^{\times \nn}\bigr)^{\chi\i}
\ee
defined using the theory of Chern classes
for some finite set~$S$ of primes depending only on~$F$. If $\nn$ is prime 
to a certain integer $M_F$ depending on~$F$, then the map \changed{$c_\z$} is
injective and its image is contained 
in~$\bigl(\OL_\nn^{\times}/\OL_\nn^{\times \nn}\bigr)^{\chi\i}$, and equal to this 
if~$\nn$ is prime.
%, and 
%if~$\nn$ is also square-free, then the map 
%$R_\z: 
%B(F)/\nn B(F)\to\bigl(\OL_\nn^{\times}/\OL_\nn^{\times \nn}\bigr)^{\chi\i}$ 
%is an isomorphism.}
%\be 
%\label{eq.cmap}
%c_{\z}:\, K_3(F)/\nn K_3(F)\;\to\;
%\left(F_\nn^{\times}/F_\nn^{\times \nn}\right)^{\chi^{-1}}\;,
%\ee
%defined using the theory of Chern classes,  which is an isomorphism onto 
%$\bigl(\OL_\nn^{\times}/\OL_\nn^{\times \nn}\bigr)^{\chi\i}$ for 
%$\nn$~square-free and prime to~$M_F$. 
%\rmk{\rm A theorem can't mention the reasons why it's true!}
\end{theorem}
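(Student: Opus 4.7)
The plan is to construct $c_\z$ as a twisted étale Chern class and then to deduce its injectivity and image description by comparing $c_\z$ with the Bloch-group map $R_\z$ of Theorem~\ref{thm.R}, via Suslin's identification of $K_3(F)$ with an extension of $B(F)$ by the roots of unity in $F$.

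For the construction, enlarge $S$ so that it contains all primes of $F$ dividing $\nn$, and start from the universal étale Chern class
\[
c_{2,1}\colon K_3(F)/\nn K_3(F) \map H^1_{\et}\bigl(\mathrm{Spec}\,\OL_{F,S},\,\mu_\nn^{\otimes 2}\bigr).
\]
By Kummer theory $H^1_{\et}(\OL_{S,\nn},\mu_\nn)\cong \OL_{S,\nn}^\times/\OL_{S,\nn}^{\times \nn}$, and when $\nn$ is coprime to $|G|$ (with $G=\Gal(F_\nn/F)$) the Hochschild--Serre spectral sequence yields
\[
H^1_{\et}\bigl(\mathrm{Spec}\,\OL_{F,S},\mu_\nn^{\otimes 2}\bigr) \;\cong\; \bigl(\OL_{S,\nn}^\times/\OL_{S,\nn}^{\times \nn}\otimes \mu_\nn\bigr)^G.
\]
Fixing the primitive root $\z$ trivialises the $\mu_\nn$ factor and identifies the $G$-invariants with the $\chi\i$-eigenspace appearing in~\eqref{eq.cmap}, since $\sigma$ sends $x\otimes\z$ to $\sigma(x)\otimes\z^{\chi(\sigma)}$. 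This defines $c_\z$, and the dependence on the choice of $\z$ obeys the rule~\eqref{eq.2actions}. Without any coprimality assumption on $\nn$ one still obtains such a map, at the cost of further enlarging~$S$.

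For the injectivity, image, and surjectivity statements, I invoke the theorems of Merkurjev--Suslin and Suslin: up to $2$-torsion there is an exact sequence
\[
0 \map \mu_F \map K_3^{\mathrm{ind}}(F) \map B(F) \map 0,
\]
and $K_3(F)$ differs from $K_3^{\mathrm{ind}}(F)$ only by the Milnor piece $K_3^M(F)$, which is $2$-torsion by Bass--Tate. Hence, when $\nn$ is coprime to the orders of $\mu_F$, of $K_3(F)_{\mathrm{tors}}=\Z/w_2(F)\Z$ from~\eqref{K3F}, and of $K_2(\OL_F)$ --- all accommodated by taking $M_F$ to be a multiple of $6\,\Delta_F\,|K_2(\OL_F)|$ as in the remark --- reduction mod $\nn$ of the above sequence yields an isomorphism $K_3(F)/\nn K_3(F)\cong B(F)/\nn B(F)$. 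The crux is that this isomorphism intertwines $c_\z$ with $R_\z$; given that, Theorem~\ref{thm.R} immediately yields injectivity and the fact that the image lies in $\OL_\nn^\times/\OL_\nn^{\times \nn}$. Surjectivity for $\nn$ prime follows by a rank comparison using Borel's theorem together with the structure~\eqref{K3F}.

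The main obstacle is this last compatibility $c_\z\leftrightarrow R_\z$: one must identify the abstract étale Chern class, evaluated on an explicit Bloch-cycle representative $\sum n_i[X_i]\in A(F)$, with the concrete product~\eqref{defRz} of cyclic quantum dilogarithm values. This reduces to verifying that $D_\z(x)$, modulo $\nn$-th powers, satisfies the five-term relation and the triviality on Steinberg symbols in exactly the same manner as the universal Chern class does on the corresponding classes in $K_3$ --- which is precisely the content of the explicit construction of $R_\z$ to be carried out in Section~2. Once this naturality is established, all remaining assertions of the theorem transfer formally from Theorem~\ref{thm.R}, and the coprimality conditions encoded in $M_F$ are exactly what is needed to prevent the comparison from being disturbed by roots of unity, by decomposable $K$-theory, or by primes in $S$.
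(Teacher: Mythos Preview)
Your construction of $c_\z$ is essentially correct and matches the paper's Section~\ref{sec.chern}. The problem lies in the second half of your argument, where you propose to deduce the injectivity, unit-image, and surjectivity statements for $c_\z$ by transferring them from Theorem~\ref{thm.R} (about $R_\z$) via an identification $c_\z \leftrightarrow R_\z$. This is circular: in the paper's logical structure, Theorem~\ref{thm.R} is \emph{deduced from} Theorem~\ref{thm.c} together with the comparison Theorem~\ref{thm.cR}, not the other way around. The paper says so explicitly just after stating Theorem~\ref{thm.c}: ``the proof of Theorem~\ref{thm.R} relies upon the precise computation of~$K_3(F)$ and the properties of~$c_\z$ given above.'' So Theorem~\ref{thm.R} is not available to you here as an input.

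Moreover, the compatibility ``$c_\z$ intertwines with $R_\z$'' that you call the crux is not a formality reducible to checking the five-term relation for $D_\z$: it is the content of Theorem~\ref{thm.cR}, whose proof (Section~\ref{sec.cRcompare}) passes through reduction to finite fields and a Chebotarev argument, and itself uses the injectivity of $c_\z$ already established. The paper proves Theorem~\ref{thm.c} entirely within $K$-theory and Galois cohomology, with no reference to $R_\z$ or Bloch groups: injectivity is Lemma~\ref{lemma:sahinj2} (the Chern class $K_3(F)\otimes\Z_p \to H^1(\OL_F[1/p],\Z_p(2))$ is an isomorphism by Quillen--Lichtenbaum/Merkurjev--Suslin, and the inflation--restriction kernel vanishes by Sah's lemma when $(\nn,w_2(F))=1$); the upgrade from $S$-units to units is Lemma~\ref{lemma:avoidS2}, using Keune's injection of the $\chi^{-1}$-part of the Picard group into $K_2(\OL_F)/p^\ll$; and surjectivity for prime $\nn$ is a cardinality count, since Proposition~\ref{prop.calOm} shows that both sides are isomorphic to $(\Z/\nn\Z)^{r_2(F)}$.
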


We note that the proof of Theorem~~\ref{thm.R} relies upon the precise 
computation of~$K_3(F)$ and the properties of~$c_{\z}$ given above.
Finally, in view of the near isomorphism between~$B(F)$ and $K_3(F)$, one 
might guess that the two maps $P_\z$ and $c_\z$ are the same, at least up to 
a simple scalar.
% (which one can speculate is a sign and a power of~$2$). 
%\rmk{``scalar" replaceable by ``sign and power of~2\,?"}
This is the content of our next theorem. 

\begin{theorem}
\label{thm.cR}
For $\nn$ prime to~$M_F$, the map $R_\z$ equals $c_\z^\gamma$ for some 
$\gamma\in(\Z/\nn\Z)^{\times}$.
\end{theorem}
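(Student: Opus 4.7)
The plan is to exploit the near-isomorphism $B(F)/\nn B(F) \cong K_3(F)/\nn K_3(F)$ together with the naturality of both $R_\z$ and $c_\z$, and to deduce that the resulting natural automorphism is multiplication by a single scalar.

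First, by Suslin's theorem the kernel of $K_3(F) \twoheadrightarrow B(F)$ (up to $2$-torsion) is cyclic of order dividing $w_2(F)$, and $M_F$ is divisible by $2 w_2(F)$. Hence $\gcd(\nn, M_F) = 1$ forces the natural map $K_3(F)/\nn K_3(F) \to B(F)/\nn B(F)$ to be an isomorphism, so $R_\z$ and $c_\z$ become injective homomorphisms from the single abelian group $G := B(F)/\nn B(F)$ into $\bigl(\OL_\nn^{\times}/\OL_\nn^{\times \nn}\bigr)^{\chi\i}$. For $\nn$ prime, both images equal the full eigenspace by Theorems~\ref{thm.R} and~\ref{thm.c}, so $\phi := c_\z^{-1} \circ R_\z$ is an automorphism of~$G$; the general case of $\nn$ coprime to $M_F$ then reduces to the prime case by compatibility with the reductions modulo $p$ for primes $p \mid \nn$.

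To pin down $\phi$ I would use naturality. The formula~\eqref{defDz} makes $R_\z$ manifestly functorial under base change $F \hookrightarrow F'$ (among fields with $\gcd(\nn, M_{F'}) = 1$), and $c_\z$ is functorial by construction of Chern classes. So $\phi$ is a natural automorphism of the functor $F \rightsquigarrow B(F)/\nn B(F)$. I would then invoke a rigidity result — a variant of Soul\'e's characterization of the \'etale Chern class — to conclude: the space of natural transformations $K_3(-)/\nn \to H^1(-, \mu_\nn^{\otimes 2})$ (into which the target embeds by Kummer theory) is a free cyclic $\Z/\nn\Z$-module, so any two natural injections differ by a scalar unit $\gamma \in (\Z/\nn\Z)^{\times}$. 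An explicit alternative is to evaluate both $R_\z$ and $c_\z$ on a single well-chosen universal class — say the five-term combination $\xi_{X,Y}$ of~\eqref{5term} in $B(\Q(X, Y))$, or a cyclotomic-unit class in $B(\Q(\z_\ell))$ for an auxiliary prime~$\ell$ — and to read off~$\gamma$ from the comparison.

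The main obstacle is precisely the rigidity step: naturality alone does not force $\phi$ to be a scalar, since $G$ could in principle decompose into natural summands on which $\phi$ acts by different units. Appealing to Soul\'e requires verifying that $R_\z$ fits into the framework, i.e., that the construction in Section~2 (the cyclic dilogarithm formula~\eqref{defRz} together with the Kummer descent cutting $P_\z(\xi)$ from the extension $H_\xi$ down to $F_\nn$) genuinely represents an \'etale cocycle for a Chern class on $K_3(F)/\nn$. Once this identification is made, the comparison with $c_\z$ reduces to a Galois-cohomological computation on one test class, and the scalar $\gamma$ emerges uniformly across all of~$G$.
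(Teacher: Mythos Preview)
Your proposal has two genuine gaps.

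\textbf{Circularity.} You invoke Theorem~\ref{thm.R} to assert that $R_\z$ is injective and (for prime~$\nn$) surjective onto the eigenspace, and then form the automorphism $\phi = c_\z^{-1}\circ R_\z$. But in the paper the proof of Theorem~\ref{thm.R} is \emph{deduced from} Theorem~\ref{thm.cR}: the only way injectivity of $R_\z$ is established is by first identifying it up to a unit with~$c_\z$, whose injectivity is Lemma~\ref{lemma:sahinj2}. So you cannot use Theorem~\ref{thm.R} as input here.

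\textbf{The rigidity step.} Even if one granted that both maps were isomorphisms onto the same eigenspace, your argument that $\phi$ is a scalar is not a proof, as you yourself note. The functor $F\mapsto B(F)/\nn B(F)$ lands in groups of rank~$r_2(F)$, and naturality under field extensions does not by itself force a natural automorphism to be multiplication by a single element of $(\Z/\nn\Z)^\times$. Appealing to a Soul\'e-type classification requires first verifying that $R_\z$ actually arises from an \'etale Chern class, which is essentially the content of the theorem you are trying to prove.

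The paper's route is completely different and sidesteps both issues by reduction to finite fields. For each prime $\q$ of~$\OL_F$ of norm $q\equiv -1\pmod\nn$ splitting completely in~$F$, both $R_\z$ and $c_\z$ are compatible with local maps $R_{\z,\q},\,c_{\z,\q}:B(\F_q)\otimes\Z/\nn\Z\to\F_{q^2}^\times/\F_{q^2}^{\times\nn}$. A Chebotarev argument (Proposition~\ref{prop:cheb}(c)) shows that the global class $c_\z(\xi)$ is determined up to a scalar by the set of such~$\q$ at which $c_{\z,\q}(\xi)$ vanishes. The key technical input is then Theorem~\ref{theorem:modpq}: for \emph{every} such~$\q$ the local map $R_{\z,\q}$ is an isomorphism. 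Since both source and target are cyclic of order~$\nn$, this forces $R_{\z,\q}(\xi)=0\iff\xi=0\iff c_{\z,\q}(\xi)=0$, so $R_\z(\xi)$ and $c_\z(\xi)$ have the same vanishing locus and hence agree up to a scalar. The proof of Theorem~\ref{theorem:modpq} is itself nontrivial: it uses the explicit torsion class $\eta_\z\in B(\Q(\z+\z^{-1}))$ of~\eqref{eq.etaz}, a Kummer-theoretic construction with the element $\tau=\prod_k(1-\z^k a)^k$ for a carefully chosen integer~$a$, and a second application of Chebotarev to exhibit at least one prime at which $R_{\z,q}$ is visibly nontrivial.
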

The constant~$\gamma$ does not depend on the underlying field;
both our construction and the Chern class map are well behaved in finite 
extensions, so we can compare the maps over any two fields with
the maps in their compositum. 
We conjecture that the constant~$\gamma$ is, up to sign, a power 
of~$2$ that is independent of \frankchanged{both~$F$} and~$\nn$. \frankchanged{More
optimistically, one might
further guess that~$\gamma$ is exactly~$2$.}
To \frankchanged{motivate} our conjecture,
and to determine~$\gamma$, it suffices 
to compute the image under both maps~$R_\z$ and $c_\z$ of some element of 
$K_3(F)/\nn K_3(F)$ of exact 
order~$\nn$. For each root of unity~$\z$ of order~$\nn$, there is a specific 
element~$\eta_{\z}$ (eq.~\eqref{eq.etaz})
of the finite Bloch group~$B(\Q(\z+\z\i))$ that is of 
exact order~$\nn$. Using the relation of the map $R_\z$ to the radial 
asymptotics of certain $q$-series called Nahm sums discussed in 
Section~\ref{sec.nahm}, we will prove
\be
\label{CompRz}
R_\z(\eta_\z) \= \z^2
\ee
(Theorem~\ref{thm.eta}). On the other hand, certain expected functorial 
properties of the map $c_\z$, discussed in Section~\ref{subsection:speculative} 
indicate that up to sign and a small power of~$2$, we have:
\be
\label{Compcz}
c_\z(\eta_\z) \overset{?} 
\= \z\,,
\ee
and in combination with~\eqref{CompRz} this justifies our conjecture
concerning $\gamma$.

The above-mentioned relation between our \emph{mod~$n$ regulator map} 
on Bloch groups and the asymptotics of Nahm sums near roots of unity is also 
an ingredient of our proof of Nahm's conjecture (under some 
restrictions) relating the modularity of his sums to torsion in the Bloch 
group.  The argument, described in Section~\ref{sub.nahmconj}, uses the full 
strength of Theorem~\ref{thm.R} and gives a nice demonstration of the 
usefulness, despite its somewhat abstract statement, of that theorem.

Theorem \ref{thm.R} \changed{also}
motivates a mod~$\nn$ (or \emph{\'etale}) version of the 
Bloch group of a number field~$F$, defined by
\be
\label{BFn}
B(F;\Z/\nn\Z) \= A(F;\Z/\nn\Z)/(\nn Z(F)+ C(F))\,,
\ee
where $A(F;\Z/\nn\Z)$ is the kernel of the map 
$d:Z(F)\to\wedge^2(F^\times/F^{\times\,\nn})$ induced by~$d$. This  is studied 
in Section~\ref{sec.K2F}, where we establish the following relation 
to~$K_2(F)$.

\begin{theorem}
\label{thm.K2}
The \'etale Bloch group is related to the original Bloch group by an exact 
sequence
\be
\label{Skype}
0 \map B(F)/\nn B(F) \map B(F;\Z/\nn \Z) \map K_2(F)[\nn] \map 0 \,,
\ee
%(Prop.~\QU, Section~\QU), 
where $K_2(F)[\nn]$ is the $\nn$-torsion in the $K$-group $K_2(F)$.
\end{theorem}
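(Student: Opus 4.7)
The asserted exact sequence is a universal coefficients statement for the two-term complex
\[
\calC^\bullet \;:=\; \bigl[\,Z(F)/C(F) \xrightarrow{\bar d} \wedge^2 F^\times\,\bigr]
\]
placed in cohomological degrees $0$ and $1$, whose cohomology groups are $H^0(\calC^\bullet) = B(F)$ by definition and $H^1(\calC^\bullet) = K_2(F)$ by Matsumoto's theorem (the image of $\bar d$ being precisely the Steinberg subgroup). The universal coefficients sequence for the derived reduction $\calC^\bullet \otimes^L \Z/n\Z$ then reads
\[
0 \longrightarrow B(F)/n \longrightarrow H^0(\calC^\bullet \otimes^L \Z/n\Z) \longrightarrow K_2(F)[n] \longrightarrow 0,
\]
and the theorem amounts to identifying this middle term with $B(F; \Z/n\Z)$.

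I would carry out the identification by writing the three maps down explicitly. The inclusion $\iota\colon B(F)/nB(F) \to B(F; \Z/n\Z)$ is induced by $A(F) \hookrightarrow A(F; \Z/n\Z)$. The Bockstein $\alpha\colon B(F; \Z/n\Z) \to K_2(F)[n]$ is defined as follows: for $\xi \in A(F;\Z/n\Z)$, write $d(\xi) = n\omega$ with $\omega \in \wedge^2 F^\times$; since $n[\omega] = [d(\xi)] = 0$ in $K_2(F)$, one may set $\alpha(\xi) := [\omega] \in K_2(F)[n]$, and a direct check confirms independence of all choices. Surjectivity of $\alpha$ is immediate: given $\tau \in K_2(F)[n]$, lift to $\omega \in \wedge^2 F^\times$, use $n[\omega] = 0$ to write $n\omega = d(\xi)$, so that $\xi \in A(F;\Z/n\Z)$ with $\alpha(\xi) = \tau$. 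The composition $\alpha \circ \iota$ vanishes since the choice $\omega = 0$ works whenever $\xi \in A(F)$.

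The remaining content --- exactness at the middle and injectivity of $\iota$ --- both reduce to the same snake-lemma assertion. Writing $I \subseteq \wedge^2 F^\times$ for the Steinberg subgroup, the short exact sequence
\[
0 \longrightarrow B(F) \longrightarrow Z(F)/C(F) \xrightarrow{\bar d} I \longrightarrow 0
\]
carries a connecting homomorphism $\delta\colon I[n] \to B(F)/nB(F)$ for multiplication by $n$. If $\xi \in A(F)$ satisfies $\xi = n\eta + c$ with $\eta \in Z(F)$ and $c \in C(F)$, then $d(\eta) \in I[n]$, and the obstruction to rewriting $\xi$ in the form $n\xi' + c'$ with $\xi' \in A(F)$ is precisely $\delta(d(\eta))$; similarly, the inclusion $\ker \alpha \subseteq \operatorname{image}\iota$ is obtained by upgrading a lift $\omega$ that vanishes in $K_2(F)$ to a genuine Steinberg element $d(\eta)$, so $\xi - n\eta \in A(F)$. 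Hence everything comes down to proving that $\delta = 0$, i.e.\ that every $n$-torsion Steinberg symbol admits an $n$-torsion lift in $Z(F)/C(F)$. This is the main obstacle: it is not a formal consequence of the setup but uses that the subgroup $C(F)$ of five-term relations is rich enough to absorb such torsion. I would prove it by analysing generators of $I[n]$ via the five-term relation~\eqref{5term}, reducing to $n$-torsion symbols $x \wedge (1-x)$ with $x$ tied to roots of unity or $n$-th powers, and constructing the required lift in each case --- this is where the specific combinatorics of the Bloch group enters in an essential way.
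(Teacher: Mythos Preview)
Your universal-coefficients framing is exactly the right way to think about the statement, and in fact your explicit Bockstein map $\alpha$ is the same as the connecting map $\delta\colon B(F;\Z/n\Z)\to K_2(F)[n]$ that the paper writes down at the end of Section~\ref{sec.K2F}. The gap is that you have overlooked the standing hypothesis under which the paper proves the theorem: $F$ contains no $p$th root of unity for any prime $p\mid n$ (stated at the beginning of Section~\ref{sec.K2F} and again mid-proof). This forces $F^\times$, hence $\wedge^2 F^\times$, to have no $n$-torsion. That single observation dissolves what you call the ``main obstacle'': since $I\subseteq\wedge^2 F^\times$, one has $I[n]=0$, so your connecting homomorphism $\delta\colon I[n]\to B(F)/nB(F)$ is trivially zero and no combinatorics with the five-term relation is needed. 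The paper uses exactly this: it notes that $\Tor^1(\Z/n\Z,\wedge^2 F^\times)=0$ under the hypothesis, and then the rest is the diagram chase you outlined.

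Without the hypothesis your argument actually breaks earlier than you notice: the ``direct check'' of well-definedness of $\alpha$ fails. The element $\omega$ with $d(\xi)=n\omega$ is determined only up to $(\wedge^2 F^\times)[n]$, and elements of this group need not lie in the Steinberg subgroup~$I$ (e.g.\ if $\zeta_p\in F$ then $\zeta_p\wedge a$ is $p$-torsion in $\wedge^2 F^\times$ but typically maps to a nonzero class $\{\zeta_p,a\}\in K_2(F)$). So $\alpha$ is not even defined in the generality you are implicitly working in, and your proposed attack on $I[n]$ via five-term relations is both unnecessary (once you impose the hypothesis) and unlikely to succeed (if you do not). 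Once you add the line ``since $\zeta_p\notin F$ for $p\mid n$, the group $\wedge^2 F^\times$ has no $n$-torsion,'' your proof is complete and coincides with the paper's.
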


There is a corresponding exact sequence (Equation~\eqref{eq:kshort})
with~$B(F)/\nn B(F)$ \changed{and $B(F;\Z/\nn \Z)$ replaced by Galois
cohomology groups, and these sequences coincide for~$\nn = p^m$
prime to the number~$w_F$ of roots of unity of~$F$.}
%\be
%\label{defAFm}
%A(F;\Z/\nn\Z)= \Ker( d: Z(F) \to \wedge^2(F^\times/F^{\times \nn}),
%\qquad [X] \mapsto X \wedge (1-X) \,.
%\ee
\smallskip
A large part of the story that we have told here for the Bloch group 
$B(F)$ and the third $K$-group $K_3(F)$ can be generalized to higher Bloch 
groups $B_m(F)$ and $K_{2m-1}(F)$ with~$m\ge2$, and here the \'etale version 
really comes into its own, because the higher Bloch groups as originally 
introduced in~\cite{Za:texel} have several alternative definitions that 
are only conjecturally isomorphic and are difficult or impossible to compute 
rigorously, whereas their \'etale versions turn out to have a canonical 
definition and be \changed{more} amenable to \changed{numerical} computations.  The study of the 
higher cases has many \changed{points} in common with the $m=2$ case studied here, 
but there are also many new aspects, and the discussion will therefore be 
\changed{left to a future time}.
%given in a separate paper~\cite{CGZ} which is work in progress.

\subsection{Nahm's Conjecture}
\label{sub.intro.nahm}

The \changed{$S$}-unit constructed in Section~\ref{sub.intro.bloch} 
also appears in connection with the asymptotics near roots of unity
of certain $q$-hypergeometric series called Nahm sums. These series are 
defined by
$$
f_{A,B,C}(q) \=
\sum_{m\in\Z_{\ge0}^r} \frac{q^{\frac12 m^tAm + Bm+C}}{(q)_{m_1}\cdots(q)_{m_r}} \,,
$$
\changed{where~$(q)_r=\prod_{k=1}^r(1-q^k)$ is the quantum $r$-factorial},
$A \in M_r(\Q)$ is a positive definite symmetric matrix, $B$ 
an element of $\Q^r$, and~$C$ a rational number. Based on ideas coming from
characters of rational conformal field theories, Nahm conjectured a relation
between the modularity of the associated holomorphic function 
$\tf_{A,B,C}(\tau)=f_{A,B,C}(e^{2\pi i\tau})$ in the complex upper half-plane 
and the vanishing of a certain element or elements in the Bloch group 
of~$\Qbar$.
(See \cite{Nahm},~\cite{Zagier}, and Section~\ref{sec.nahm} for more details.)
This relation conjecturally goes in both directions, but with the implication
from the vanishing of the Bloch elements to the modularity of certain Nahm
sums not yet having a sufficiently precise formulation to be studied. 
The conjectural implication from modularity to vanishing of Bloch elements, 
on the other hand, had a completely precise formulation, as follows. Let~$A$ 
be as above and $(X_1,\dots,X_r)$ the unique solution in~$(0,1)^r$ 
of {\it Nahm's equation}
$$
1 \m X_i \=  \prod_{j=1}^r X_j^{a_{ij}} \qquad (i=1,\dots,r) \,.
$$
Then Nahm shows that the element $\xi_A=\sum_{i=1}^r [X_i]$ belongs to
$B(\BR\cap\Qbar)$, and his assertion is the following theorem, which we will
prove as a consequence of the injectivity statement in Theorem~\ref{thm.R}.

\begin{theorem}[Nahm's Conjecture]
\label{thm.half.nahm}
If the function $\tf_{A,B,C}(\tau)$ is modular for some $A$, $B$ and $C$ as 
above, then $\xi_A$ vanishes in the Bloch group of $\Qbar$.
\end{theorem}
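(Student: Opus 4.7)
The plan is to deduce the vanishing of $\xi_A$ in $B(\Qbar)$ from the injectivity of the regulator map $R_\z$ in Theorem~\ref{thm.R} by using the modularity of $\tf_{A,B,C}$ to pin down the radial asymptotics of $f_{A,B,C}(q)$ near roots of unity. Let $F$ be any number field containing the coordinates $X_1,\dots,X_r$ of the Nahm solution, so that $\xi_A\in B(F)$. Since $B(\Qbar)$ is a torsion-free divisible group and $B(F)$ is finitely generated, it suffices to show that $\xi_A\in \nn B(F)$ for infinitely many primes $\nn$ coprime to $M_F$: any such element of $B(F)$ is necessarily torsion, hence trivial in $B(\Qbar)$.

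First I would analyze $f_{A,B,C}(q)$ in the regime $q=\z\,e^{-t/\nn}$ with $t\to 0^+$ and $\z$ a primitive $\nn$th root of unity, in the spirit of Section~\ref{sec.nahm}. A saddle point analysis in the $r$ summation variables (grouped by their residues modulo~$\nn$) produces a leading asymptotic governed by a potential whose critical equations are precisely Nahm's equations $1-X_i=\prod_j X_j^{a_{ij}}$. I expect the leading multiplicative constant in that expansion to be, up to a transcendental factor depending only on $A$, $B$, $C$ and $t$, a product of values of the cyclic quantum dilogarithm $D_\z(x_i)$ at $\nn$th roots $x_i$ of the Nahm solution coordinates $X_i$, whose class modulo $\nn$th powers of $F_\nn^\times$ is by construction~\eqref{defRz} precisely $R_\z(\xi_A)$. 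This identification is the bridge between the Bloch group regulator of Theorem~\ref{thm.R} and the analytic asymptotics of Nahm sums.

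On the other hand, modularity of $\tf_{A,B,C}$ on some congruence subgroup forces the radial asymptotics at $\z=e^{2\pi\ii a/c}$ to be obtained from the Fourier expansion at $\ii\infty$ by an explicit $\SL_2(\Z)$ transformation, whose leading coefficient is an algebraic combination of the Fourier coefficients of $\tf_{A,B,C}$ at $\ii\infty$ and of elementary factors depending only on $c$ and $\z$. Comparing the two descriptions of the same leading constant forces $R_\z(\xi_A)$ to be trivial in $(F_\nn^\times/F_\nn^{\times\nn})^{\chi\i}$ for all sufficiently large primes $\nn$ coprime to $M_F$. The injectivity statement in Theorem~\ref{thm.R} then gives $\xi_A\in \nn B(F)$ for each such $\nn$, and the reduction above completes the argument.

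The hardest step is the comparison of the two asymptotic expansions: one must isolate the factor on the modular side that matches $R_\z(\xi_A)$ and verify that the remaining factors (weight, nebentypus, the $c\tau+d$ prefactor, and the algebraic Fourier coefficients of $\tf_{A,B,C}$) all lie in $F_\nn^\times$ modulo $\nn$th powers, not merely in a larger Kummer extension. This requires careful tracking of signs, roots of unity, and of the normalization constant $C$ in~$f_{A,B,C}$; it is here that the \emph{full} strength of Theorem~\ref{thm.R}, in particular the identification of the image of $R_\z$ with the $\chi\i$-eigenspace of $\OL_{S,\nn}^\times/\OL_{S,\nn}^{\times\nn}$, is essential.
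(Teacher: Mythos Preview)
Your proposal is correct and follows essentially the same route as the paper: compare the radial asymptotic of $f_{A,B,C}$ at a primitive $\nn$th root of unity given by Theorem~\ref{thm.nahmEM} (whose leading constant carries $P_\z(\xi_A)^{1/\nn}$) with the asymptotic forced by modularity, conclude that $R_\z(\xi_A)$ is trivial for infinitely many~$\nn$, and apply the injectivity in Theorem~\ref{thm.R}. The only notable difference is that the paper compares against the known asymptotic at the cusp~$0$ (i.e.\ $q\to1$, where the leading constant is an explicit nonzero algebraic number~$K$ from~\cite{Zagier}) rather than the Fourier expansion at~$i\infty$, and is careful that the comparison only gives $\Phi_\z(0)^r\in F_\nn$ for a \emph{fixed} power~$r$ independent of~$\nn$---so one deduces triviality of $R_\z(\xi_A)^r$, which suffices since~$\nn$ ranges over primes.
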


We remark that the vanishing condition can be (and often is) stated by 
saying that $\xi_A$ is a torsion element in the Bloch group of the smallest 
real (but in general not totally real) number field containing all the~$X_i$, 
but when we take the image of this Bloch group in the Bloch group 
of~$\Qbar$ or~$\BC$, then the torsion vanishes,
\changed{because~$B(\Qbar)$ and~$B(\BC)$ are
  %are known to be torsion-free.}
  uniquely divisible~\cite[Theorem~6.3]{Suslinclosed} and so in
  particular are torsion free.}

\changed{
\subsection{\changed{Motivation from quantum topology}}
\label{sec.motivation}

In this subsection --- which will not be used anywhere else in this paper --- we
% discuss the historical motivation which led to the results presented here.
discuss the empirical discoveries that led us to conjecture the results presented here.
A much more detailed discussion of these ideas and of the experimental results
can be found in~\cite{GZ:quantum} (Introduction, Sections~1, 5 and~9, and~Appendix).

The story has its origin in Quantum Topology and one of its most prized problems, the 
Volume Conjecture of Kashaev, which relates the Jones polynomial of a hyperbolic knot 
with hyperbolic geometry. More precisely, the Volume Conjecture~\cite{Volume} asserts 
that the Kashaev invariant $\la K \ra_N$ of a hyperbolic knot (which is equal to the
value of the $N$-th colored Jones polynomial at $e^{2 \pi i/N}$~\cite{MM}) grows
exponentially at a rate proportional to the hyperbolic volume $\Vol(K)$ of the knot:
\be
\lim_N \frac{1}{N} \log |\la K \ra| = \frac{\Vol(K)}{2\pi} \,. 
\ee
A refinement of the Volume Conjecture asserts the existence of an asymptotic expansion
\be 
\label{ArithConj2}
\la K\ra_N\;\sim\; N^{3/2}\,\Phih^K (2 \pi i/N),
\qquad \Phih^K(h)= e^{\tV(K)/h} \Phi^K(h)
\ee
to all orders in $1/N$ as $N\to\infty$, where
$\tV(K)=i \Vol(K) + \CS(K) \in \BC/4\pi^2\BZ$
is the complexified volume of $K$  
and $\Phi^K(h)$ is a power series which satisfies $\mu\delta(K)^{1/2} \Phi^K(h)
\in F_K[[h]]$ where $F_K$ is the trace field of the hyperbolic knot, $\mu$ is some 
eighth root of unity, and $\delta(K)$ is a non-zero number in $F_K$ related to the 
Ray-Singer torsion of~$K$. For instance, for the simplest hyperbolic (figure-eight) 
knot, whose trace field is $F_{4_1}=\Q(\sqrt{-3})$, the first few terms of the series 
$\Phi^{4_1}(h)$ are given by
$$ \Phi^{4_1}(h) \= \frac1{\root4\of3}\,\Bigl(1 \+ \frac{11}{72\sqrt {-3}} h  
\m \frac{697}{31104}h^2\m\frac{724351}{33592320\sqrt{-3}}h^3 
\+\cdots\Bigr) \,.
$$
In~\cite{Za:QMF}, the third author observed that if we extend the Kashaev invariant
to a function $\J^{(K)}:\Q/\Z\to\Qbar$ by Galois equivariance (i.e., by setting 
$\J^{(K)}(a/N):=\s_{-a}\bigl(\la K\ra_N\bigr)$ for $N>0$ and $(a,N)=1$), 
then~\eqref{ArithConj2} can be improved to 
\be
\label{ModConj2}
\J^K(-1/X) \;\sim\;  X^{3/2}\, \J^K(X) \, \Phih^K (2 \pi i/X)\,,
\ee
to all orders in $1/X$ as $X\to\infty$ in~$\Q$ with bounded denominator (note that 
$\J^K(X)=1$ if $X=N\in\N$), and more generally that for any $\sm abcd\in\SL(2,\Z)$ 
we have
\be
\label{ModConj3}
\J^K\Bigl(\frac{aX+b}{cX+d}\Bigr) \;\sim\;
(cX+d)^{3/2}\, \J^K(X) \,
e^{\tV(K)(X+d/c)}\,\Phi^K_{a/c}\Bigl(\frac{2\pi i}{cX+d}\Bigr)
\ee
to all orders in $1/X$ as $X\to\infty$ in~$\Q$ with bounded denominator, where 
$\Phi^K_\a(h) \in\Qbar[[h]]$ ($\a\in\Q$) is a power series depending only on $\a\in\Q/\Z$.
This conjectural modular property led to the concept 
of a \emph{quantum modular form}~\cite{Za:QMF}, and its more recent development, that 
of a \emph{holomorphic quantum modular form}~\cite{GZ:quantum, Za:HQMF}. 
Experiments for various knots and various values of~$\a$ suggested that the power
series $\Phi^K_\a$ is the product of a number $\kappa_\a\in\Qbar$ with a power series
having coefficients in
the cyclotomic extension $F_K(e^{2\pi i\a})$ of the trace field of the knot.  
The story that led to the present paper was then the striking empirical discovery that 
the quotient of~$\kappa_\a$ by~$\kappa_0$ was always the product of a root 
of unity 
%where $c$ is the denominator of~$\a$, 
and the $c$-th root of an
$S$-unit $\eps_\a^K$ in $F_K(e^{2\pi i\a})$ with $S$ independent
of $\a$, where $c$ is the denominator of $\a$, and 
%and the $c$th root of an $S$-unit 
%$\ve_\a^K$ in~$F_K(e^{2\pi i\a})$ (with $S$ independent of~$\a$), and
 furthermore that 
this unit, which is well-defined only up to $c$th powers, transforms according to
Equation~\eqref{eq.2actions}. For example, numerical computations given
in~\cite{Za:QMF} for the $4_1$ knot and~$c=5$ suggested that $\ve_\a^K$ in this case
equals $(\z^4+1)/\z(\z-1)^2$ with
$\z=e^{2\pi i(\a-1/3)}\in F_{4_1}(e^{2\pi i\a})=\Q(e^{2\pi i/15})$.

On the other hand, it is well known that a hyperbolic knot (and more generally,
a complete, finite-volume hyperbolic 3-manifold) gives an element $\xi_K$
in the Bloch group $B(F_K)$, or equivalently of the third $K$-group $K_3(F)$,
which determines (via the regulator map) the complexified volume of~$K$,
and this led to the guess that the units appearing in the quantum modularity
conjecture might depend only on~$\xi_K$.  Moving away from hyperbolic manifolds
and of quantum topology, these observations prompted the third author to ask
the first author during an Oberwolfach meeting (in July 2011) whether he could
suggest a construction of a map $c_\z$ as in Theorem~\ref{thm.c}.
The answer was positive, but of course with no proof that the units coming from
the Kashaev invariant and the units given by $c_\z$ were connected, leading to
an initial two-author version of the current paper with an abstract saying that
we conjectured that a number that could not be defined was equal to a number
that could not be computed!  In the following years, the number of authors
increased by one and it was discovered that the asymptotic expansions of 
Nahm sums at roots of unity also involved a unit with very similar properties,
which together with Nahm's
construction of an element of the Bloch group associated canonically to any
Nahm sum suggested the existence of a map $R_\z$ as in Theorem~\ref{thm.R} as
well as of the map~$c_\z$ in algebraic $K$-theory.
The map $R_\z$ has the fortunate property of
being well-defined and computable.%, and comparable to the Chern class map~$c_\z$.
Eventually we found complete constructions of both
maps, as explained in the current paper.   But the basic disclaimer of the 
old abstract remains true: the quantum modular conjecture is still open, so
that we cannot rigorously prove even the existence of the units~$\ve_\a^K$.
There is, however, a conjectural description of the power series $\Phi_\a^K$
occurring in~\eqref{ModConj3}, as given by Tudor Dimofte and the second
author~\cite{DG,DG2}, and these {\it can} be related to the map~$R_\z$, as
discussed in Section~7 of~\cite{GZ:quantum}, so in conjunction with the
extensive numerical computations of the putative units coming from the
quantum modular conjecture described in the appendix of that paper we can
conjecture with a fair degree of conviction that these numbers do indeed
always coincide.}

\subsection{Plan of the paper}
\label{sub.plan}

In Section~\ref{sec.PR}, \frankchanged{we begin with  some preliminaries on the Bloch group.
We then}
recall the cyclic quantum dilogarithm and
use it, together with some basic facts about Kummer extensions, to define 
the map $R_\z$. The fact that the map $R_\z$ satisfies the 5-term relation 
follows from some state-sum identities of 
Kashaev--Mangazeev--Stroganov~\cite{KMS}, reviewed in 
Section~\ref{sub.5term}. The remaining statements of Theorem~\ref{thm.R} are 
deduced from Theorems~\ref{thm.c} and~\ref{thm.cR}.

In Section~\ref{sec.chern} we recall the basic properties of Chern classes
and use them to define the map $c_\z$ and prove Theorem~\ref{thm.c}.
Its proof follows from Lemmas~\ref{lemma:sahinj} and~\ref{lemma:avoidS2}.

The comparison of the maps $c_\z$ and $R_\z$ is done via reduction
to the case of finite fields. This reduction is discussed in 
Section~\ref{sec.ff}, and the proof of Theorem~\ref{thm.cR} is given in
Section~\ref{sec.cRcompare}.

In Section~\ref{sec.K2F}, we discuss the connection of our map $R_\z$ 
with Tate's results on $K_2(\calO_F)$.

%The units produced by our map $R_\z$ have also appeared in two
%related contexts, namely in the Quantum Modularity Conjecture concerning 
%the asymptotics of the Kashaev invariant at roots of unity, and in
%the asymptotics of Nahm sums at roots of unity.

In Section~\ref{sec.nahm}, we state the
connection of our map $R_\z$ with the radial asymptotics of Nahm sums
at roots of unity and give two applications: a proof of 
Equation~\eqref{CompRz} (as a consequence of a special modular Nahm sum, 
the Andrews-Gordon identity), and a proof of Theorem~\ref{thm.half.nahm}.

\begin{remark}
During the writing of this paper, we learned that Gangl and 
Kontsevich in unpublished work also proposed the map~$P_{\z}$ as an explicit 
realization of the Chern class map. Although they did not check
in general that the image of~$P_{\z}$ could be lifted to a suitable 
element~$R_{\z} \in (F^{\times}_{\nn}/F^{\times \nn}_{\nn})^{\chi\i}$, they did 
propose an alternate proof of the~$5$-term identity using cyclic algebras. 
Goncharov also informs us that he was aware many years ago that the 
function~$P_{\z}$ should be an explicit realization of the Chern class map.
\end{remark}

%%%%%%%%%%%%%%%%%%%%%%%%%%%%%%%%%%%%%%%%%%%%%%%%%%%%%%%%%%%%%%%%%%%%%%%%%%%%
%%%%%%%%%%%%%%%%%%%%%%%%%%%%%%%%%%%%%%%%%%%%%%%%%%%%%%%%%%%%%%%%%%%%%%%%%%%%

\section{The maps \texorpdfstring{$P_\z$}{Pz} and  
\texorpdfstring{$R_\z$}{Rz}}
\label{sec.PR}

\frankchanged{
\subsection{Preliminaries on the Bloch group}

In this section, we compare our definition to that of Suslin~\cite[\S1]{Suslintwo},
and give an alternate presentation of~$B(F)$ as a subquotient of~$\Z[F^{\times} \setminus 1]$.

\begin{definition}
\label{def.BFsuslin} 
Suslin's {\it Bloch group} of a field $F$ is the quotient 
$$
\BB(F)=\AA(F)/\CC(F)\,, 
$$
where~$\ZZ(F) = \Z[F^{\times} \setminus 1]$,  $\AA(F)$ is the kernel of the map 
$$
d: \ZZ(F)\map \wed F^{\times}, \qquad [X] \mapsto (X) \wedge (1-X) 
$$
and $\CC(F)\subseteq \AA(F)$ the group 
generated by the five term relation~(\ref{5term})
for~$X$ and~$Y$ ranging over~$F^{\times} \setminus 1$ with~$X \ne Y$.
\end{definition}

\newfrankchanged{Following~(\ref{BFn}), we define~$\BB(F;\Z/\nn\Z):=\AA(F;\Z/\nn\Z)/(n \ZZ(F) + \CC(F))$.}

\begin{lemma} \label{suslincomparison}
There is a surjection~$\BB(F) \rightarrow B(F)$ 
whose kernel is the~$2$-torsion
group generated by the elements~$\langle X \rangle = [X] + [X^{-1}]$ for~$X \in F^{\times} \setminus 1$.
\newfrankchanged{There is surjection~$\BB(F;\Z/\nn\Z) \rightarrow B(F;\Z/\nn\Z)$ whose
kernel is generated by the elements~$\langle X \rangle$.}
\end{lemma}

\begin{proof}
\newfrankchanged{The argument is the same in both cases so we only consider the first map.}
Since~$\CC(F)$ includes into~$C(F)$, there is
a natural map~$\BB(F) \rightarrow B(F)$. By~\cite[Lemma~1.3]{Suslintwo},
the element~$c:=[X]+[1-X]$ in~$\BB(F)$ does not depend on~$X$. But the image of~$c$
in~$B(F)$ is given by~$[X]+[1-X]=[0]$, so~$[0]$ lies in the image. Moreover, $[1]=0$
and~$[\infty]=-[0]$ in~$B(F)$ so the map is surjective.
The image of~$\langle X \rangle$ in~$B(F)$ is~$[X]+[X^{-1}] =0$. 
There is an isomorphism
$$\BB(F) = \BB(F) \oplus [0]\Z \oplus [1]\Z  \oplus [\infty]\Z/([0]=c, [1]=0, [\infty]=-[0]),$$
and the map~$\BB(F) \rightarrow B(F)$ factors through this isomorphism in the obvious way.
It therefore suffices to show
that any specialization of the~$5$-term identity to a ``forbidden'' pair
of parameters~$X$ and~$Y$ can be written 
as a sum of elements of the form~$\langle X \rangle$, $[1]$, and~$c - [0]$. 
Up to symmetries, this reduces to the case where one takes~$Y$ to be either~$0$, $1$, $\infty$, or~$X$.
The resulting specializations of~(\ref{5term}) are as follows:
$$
\begin{aligned}
0= \ & [X]-[0]+[0]-[0]+[1-X] =  c - [0], \\ 
0 = \ & [X]-[1]+[1/X]-[\infty]+[\infty] =   \langle X \rangle - [1], \\
0 = \ & [X] - [\infty] + [\infty] - [1-1/X] + [0] =  [X] + [1/X] - [1/X]-[1-1/X]+[0] \\
= \ &  \langle X \rangle - c + [0],  \\
0 = \ & [X] - [X] +[1] - [1] + [1] =  [1],
\end{aligned} $$
and thus the kernel consists precisely of the subgroup generated by~$\langle X \rangle$.
Finally, the fact that~$2\langle X \rangle  = 0$ in~$\BB(F)$ follows by~\cite[Lemma~1.2]{Suslintwo}.
\end{proof}
}

\subsection{The map \texorpdfstring{$P_\z$}{Pz}}

\changed{Let $\nn$ be a positive integer $\nn$ and~$\z=\z_\nn\in F_\nn$ be a
primitive~$\nn$th root of unity, which we usually consider as fixed and
omit from the notations. Let $F$ be a field of characteristic 
prime to~$\nn$ and~$F_\nn  = F(\z_\nn)$.}

Let~$\mu = \langle \z \rangle$ denote the \changed{$\Gal(F_{\nn}/F)$-module of~$\nn$th roots 
of unity. Note that~$\mu$ naturally has an action of~$\Gal(E/F)$ for any Galois extension~$E/F$
containing~$F_{\nn}$.}

The {\it universal Kummer extension} 
is by definition the extension $H/F_\nn$ obtained by adjoining~$\nn$th 
roots of every element in~$F$. Let~$\Phi = \Gal(H/F_\nn)$. 
We have \cite[Chpt.VI]{Lang:algebra}:

\begin{lemma}
\label{lemma:easy}
The extension~$H/F$ is Galois. There is a natural isomorphism
$$
\phi: F^{\times}/F^{\times \nn} 
\;\simeq\; \Hom(\Phi,\mu) \;\simeq\; H^1(\Phi,\mu)$$
given by $X \,\mapsto\, (\sigma \in \Phi \mapsto \sigma x/x)$,
where $x\in H^{\times}$ is any element that satisfies $x^\nn = X$,
\changed{and where~$\Hom$ denotes continuous homomorphisms with
respect to the usual topology on Galois groups.} \end{lemma}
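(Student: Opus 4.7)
The plan is to carry out a standard Kummer-theoretic argument, transferred from the extension $H/F_\nn$ to the subgroup of $F_\nn^\times/F_\nn^{\times \nn}$ generated by the image of $F^\times$. I organize the proof into three steps.

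\emph{$H/F$ is Galois.} The subextension $F_\nn/F$ is cyclotomic, hence Galois, so it suffices to check normality of $H/F$: any $F$-embedding $\sigma\colon H \hookrightarrow \overline{F}$ restricts to an element of $\Gal(F_\nn/F)$ on $F_\nn$, and for each $x \in H$ with $x^\nn = X \in F^\times$ the image $\sigma(x)$ is another $\nn$-th root of $X$, hence equals $\zeta^j x \in H$. Thus $\sigma(H) \subseteq H$.

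\emph{Construction of $\phi$.} For $X \in F^\times$ and any $x \in H$ with $x^\nn = X$, and for $\sigma \in \Phi = \Gal(H/F_\nn)$, the element $\sigma(x)/x$ has $\nn$-th power $\sigma(X)/X = 1$ (since $\sigma$ fixes $F$) and so lies in $\mu$. As $\mu \subset F_\nn$ is $\Phi$-fixed, $\sigma \mapsto \sigma(x)/x$ is a group homomorphism:
\[
\phi(X)(\sigma\tau) \,=\, \sigma\!\left(\tfrac{\tau(x)}{x}\right)\cdot\tfrac{\sigma(x)}{x} \,=\, \phi(X)(\tau)\,\phi(X)(\sigma).
\]
Changing $x$ to $\zeta^i x$ leaves this formula unchanged (as $\sigma$ fixes $\zeta^i \in F_\nn$), and replacing $X$ by $Z^\nn X$ with $Z \in F^\times$ amounts to replacing $x$ by $Zx$, so $\phi$ descends to a map $F^\times/F^{\times \nn} \to \Hom(\Phi,\mu)$. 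The identification $\Hom(\Phi,\mu) \cong H^1(\Phi,\mu)$ is immediate since $\Phi$ acts trivially on $\mu$.

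\emph{Bijectivity.} Kummer theory for $H/F_\nn$ (which, by the definition of $H$, is generated over $F_\nn$ by $\nn$-th roots of elements of $F^\times \subset F_\nn^\times$) gives a perfect pairing of $\Phi$ with $\Delta := F^\times F_\nn^{\times \nn}/F_\nn^{\times \nn} \subseteq F_\nn^\times/F_\nn^{\times \nn}$, producing $\Delta \cong \Hom(\Phi,\mu)$, and one checks this agrees with $\phi$ composed with the tautological surjection $F^\times/F^{\times \nn} \twoheadrightarrow \Delta$. Surjectivity of $\phi$ follows, and its injectivity reduces to the descent identity
\[
F^\times \cap F_\nn^{\times \nn} \,=\, F^{\times \nn}.
\]
This descent is the main obstacle. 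Via Hochschild--Serre for $1 \to G_{F_\nn} \to G_F \to G \to 1$ (with $G = \Gal(F_\nn/F)$), the kernel of $F^\times/F^{\times \nn} \to F_\nn^\times/F_\nn^{\times \nn}$ is identified with $H^1(G,\mu)$, so the claim becomes the vanishing $H^1(G,\mu)=0$. Under the standing assumption $\gcd(\nn,6)=1$ together with the hypothesis of Theorem~\ref{thm.R} that $F$ contains no nontrivial $\nn$-th root of unity, this vanishing can be established by a norm argument: from $X = y^\nn$ with $y \in F_\nn^\times$ one obtains $N_{F_\nn/F}(y)^\nn = X^{[F_\nn:F]}$, and a Bezout step when $\nn$ and $[F_\nn:F]$ are coprime --- supplemented by direct analysis of the faithful $G$-action on $\mu$ to cover the remaining cases --- produces $Z \in F^\times$ with $X = Z^\nn$.
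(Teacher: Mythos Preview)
The paper's own proof is simply a citation to Lang's \emph{Algebra}, Chapter~VI, so your detailed argument goes well beyond what the paper provides. Steps~1 and~2 are correct and standard, and in Step~3 your reduction of injectivity to the vanishing of $H^1(G,\mu)$ via inflation--restriction is exactly right. You are also correct that an extra hypothesis is needed here: without the assumption that $F$ contains no nontrivial $\nn$-th root of unity the statement can actually fail (for instance, $F = \Q(\zeta_5,\sqrt{-11})$ and $\nn = 55$ give $G\simeq\Z/5\Z$ acting trivially on~$\mu_5$, so $H^1(G,\mu_5)=\Hom(\Z/5\Z,\mu_5)\ne 0$).

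However, your verification that $H^1(G,\mu)=0$ under the hypothesis $\mu^G=1$ is incomplete. The norm/B\'ezout step you sketch handles only the case $\gcd(\nn,[F_\nn:F])=1$, and this coprimality can fail even when $\mu^G=1$: already for $F=\Q$ and $\nn=p^2$ one has $[F_\nn:F]=p(p-1)$. The phrase ``direct analysis of the faithful $G$-action on~$\mu$ to cover the remaining cases'' is not an argument. A clean way to close the gap is Sah's lemma (which the paper itself invokes later, in the proof of Proposition~\ref{prop.Pdef}): since $G$ is abelian, $H^1(G,\mu)$ is annihilated by $\chi(g)-1$ for every $g\in G$, and the hypothesis $\mu^G=1$ says precisely that the ideal in $\Z/\nn\Z$ generated by all $\chi(g)-1$ is the unit ideal, forcing $H^1(G,\mu)=0$.
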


\frankchanged{
By Hilbert's Theorem~$90$, these groups are all isomorphic to~$H^1(F,\mu)$.
The Galois group~$\Gal(H/F)$ respects these isomorphisms. More
explicitly, any~$\sigma \in \Gal(H/F)$  acts trivially on~$F^{\times}$ and acts on 
both~$\Phi$ and~$\mu$ via the cyclotomic character.}

Consider the function 
\be
\label{defRz2}
% P_\z(X) \=  \wD_\z(x):= \frac{D_\z(x)}{D_\z(1)} \quad 
P_\z(X) := \changed{\frac{D_\z(x)}{D_{\z}(1)}} \;\,\in\;H^\times/H^{\times \nn}
  \qquad(X\in F^\times\ssm\{0,1\},\; x^\nn=X)\,, 
\ee
where $D_\z(x)$ is the cyclic quantum dilogarithm defined in~\eqref{defDz}. 
(We previously defined~$P_{\z}(X)$, in Equation~\eqref{defDz} of the 
introduction, as an element of~$H^{\times}$, but only its image modulo~$n$th 
powers was ever used, and it is more canonical to define it in the manner 
above.) \changed{We extend the definition of~$P_{\z}$ to~$X = 1$ and~$X = 0$ by the
same formula after insisting when~$X = 1$ on the choice~$x = 1$.
In particular, $P_{\z}(1) = 1$ and~$P_{\z}(0) = D_{\z}(1)^{-1}$.}

\begin{lemma} 
\label{lem.Rz} 
The functions $P_\z, D_{\z}:F^\times\to H^\times/H^{\times \nn}$ have the following 
properties.
% Suppose that $\nn$ is prime to~$6$, and $X\in F^\times_n$. Then
\begin{enumerate}[label=(\alph*)]
\item \label{parta.Rz}
$P_\z(X)$ is independent of the choice of $\nn$th root~$x$ of~$X$.
\item \label{partb.Rz}
\frankchanged{If~$n$ is odd, then $D_\z(1)= \zeta^{n/3} \mod H^{\times n}$,
where we interpret this to mean~$1 \mod H^{\times}$ if~$(3,n) = 1$.}
 \changed{
If~$n$ is even, then~$D_{\z}(1)^2$ is a root of unity~$\mod H^{\times n}$.}
\item \label{partc.Rz}
 $(P_\z(X)P_\z(1/X))^2=1$ for any~$X\in F_n^\times$.
\item \label{partd.Rz}
$P_\z(X)\in\;H^\times/H^{\times \nn}$ is invariant
under the action of $\Phi=\Gal(H/F_\nn)$.
\item \label{parte.Rz}
$\s(P_\z(X)) = P_\z(X)^{\chi^{-1}(\sigma)}$ for all $\s\in G$.
\end{enumerate}
% Since $\Phi = \Gal(H/F_\nn)$ is a normal subgroup of $\Gal(H/F)$ and 
% acts trivially on $P_\z(x) \in H^{\times}/H^{\times \nn}$,
% there is an action of the quotient $G = \Gal(F_\nn/F)$. 
% $P_\z(X)$ for any $X\in F_n^\times$ belongs to the eigenspace 
% $\{\xi\in H^{\times}/H^{\times \nn}\mid \sigma\xi = 
% \xi^{\chi^{-1}(\sigma)}\;\text{for all $\s\in\Gal(H/F)$}\}$.
\end{lemma}

\begin{proof}
  \changed{We begin by establishing an equality for~$D_{\z}(\z^m x)/D_{\z}(x)$
    which implies part~\ref{parta.Rz} and will be useful in the sequel.
For~$0 \le m \le n-1$, we have an equality
$$
\frac{D_{\z}(\z^m x)}{D_{\z}(x)} =
\prod_{k=0}^{n-1} \frac{(1 - \z^{k+m} x)^k}{(1 - \z^k x)^k} \,.
$$
Since~$\displaystyle{ \prod_{k=0}^{n-1} (1 - \z^{k+m} x)^m} = (1-X)^m$, we may also
 write
 $$
 (1 - X)^m \frac{D_{\z}(\z^m x)}{D_{\z}(x)} =
  \prod_{k=0}^{n-1} \frac{(1 - \z^{k+m} x)^{k+m}}{(1 - \z^k x)^k}
  = \prod_{k=0}^{m-1} (1 - \z^k x)^n \,,
 $$ and hence
  \be
  \label{referee}
   \frac{D_{\z}(\z^m x)}{D_{\z}(x)}  = \prod_{k=0}^{m-1} \frac{(1 - \z^k x)^n}{1 - X} 
   \in H^{\times n},
   \ee
      which proves part~\ref{parta.Rz}. We note also that this equality holds for \emph{any}~$m$,
     since adding multiples of~$n$ to~$m$ certainly doesn't change
     the LHS of equation~(\ref{referee}) and does not change
     the RHS since~$\prod_{k=0}^{n-1} (1 - \zeta^k x)^n/(1-X) = (1-X)^n/(1-X)^n = 1$.
For  the remainder of the argument,} note that,
because $P_\z(X)$ is defined only up to 
$\nn$th~powers, we can \changed{use the alternate equality}
%~\eqref{defRz2} by
\be
\label{16*}
P_\z(X) \=  \changed{\frac{1}{D_{\z}(1)}} \prod_{k\mod\nn}
\frac
{(1\m\z^kx)^k}
{(1\m\z^k)^k}
\quad\mod {H^\times}^\nn\qquad(x^n=X), 
\ee
\changed{for~$X \ne 1$,  where
the product is over~$k \not\equiv 0 \bmod \nn$.}
%(One could even include the $k=0$ term that was omitted in~\eqref{defDz},
%as long as one interprets the~$k = 0$ term~$(1/0)^{0 \mod \nn}$ to be~$1$.)}

\changed{Reversing the order of the product,
we deduce the equality (for~$n$ odd):
\be
\label{D1}
D_{\z}(1)^2 =
 \prod_{k=1}^{n-1} (1 - \z^k)^k (1 - \z^{-k})^{n-k}
= \prod_{k=1}^{n-1} (1 - \z^k)^n (- \z^k)^k = (-1)^{n(n-1)/2} \z^{(n-1)n(2n-1)/6} n^n.
\ee
We note that if~$(n,6) = 1$ then~$D_{\z}(1)^2 =  (-1)^{n(n-1)/2} n^n$ is a perfect~$n$th power,
and thus, since~$n$ is odd, $D_{\z}(1)$ is itself a perfect~$n$th power.
If~$3|n$, then~$3n | n^2$, and combining the same argument with
the formula~(\ref{D1}) above shows that~$D_{\z}(1)^2 \equiv  \zeta^{n/6} 
\equiv \zeta^{2n/3} \mod H^{\times n}$, and thus~$D_{\z}(1) \equiv \zeta^{n/3} \mod H^{\times}$.
If~$n$ is even, then from~(\ref{D1}) we see that~$D_{\z}(1)^2$ is transparently
a root of unity in~$H^{\times}$.
This establishes part~\ref{partb.Rz}. 
}

Replacing $k$ by $-k$ in the definition of $P_\z(1/X)$, gives \changed{ (working modulo~$H^{\times n}$)}
$$
\changed{P_{\z}(X)P_{\z}(1/X)\=\prod_{k = 1}^{n-1}
\frac{ 
(1-\z^kx)^k(1-\z^{-k}x^{-1})^{-k}}
{(1-\z^k)^k(1-\z^{-k})^{-k}}
\= \prod_{k= 1}^{n-1} \frac{(-\z^kx)^k}{ (-\z^k )^k  }  \= x^{n(n-1)/2},}$$
proving part~\ref{partc.Rz}.
%\changed{ The first statement follows by noting that the calculation
%above still makes sense for~$x = 1$ after omitting the term~$k \equiv 0 \bmod n$,
%from which it follows that~$P_{\z}(1)$ is both
%killed by~$2$ and by the odd number~$\nn$ and hence must be
%trivial.} (It can also be proved more explicitly by evaluating $D_{\z}(1)^n$ 
%itself for $(n,6)=1$ as the $(-1)^{n(n-1)/2}n^n$, which is an $\nn$th power 
%because $(-1)^{(n-1)/2}n$ is a square in~$\Q(\z)$.)  
For part~\ref{partd.Rz}, we note 
that the effect of an element~$\s\in\Phi$ on $D_\z(x)$ is to replace $x$ 
by $\z^i x$ for some $i$, so the result follows from part~\ref{parta.Rz}.
For part~\ref{parte.Rz}, we first observe that the statement makes sense because 
$\Phi = \Gal(H/F_\nn)$ is a normal subgroup of $\Gal(H/F)$ and hence acts 
trivially on $P_\z(X)\in H^{\times}/H^{\times\nn}$ by virtue of~\ref{partd.Rz}, so that 
the quotient $G = \Gal(F_\nn/F)$ acts on~$P_\z(X)$. For the
proof, we choose a lift of $\sigma \in G$ to $\Gal(H/F)$ that fixes~$x$. Then
\changed{
$$
\s P_\z(X) = \prod_k
\frac{(1-\s(\z)^kx)^k }{(1-\s(\z)^k )^k }
= 
\prod_k 
 \frac
 {(1-\z^{k\chi(\s)}x)^k}
 {(1-\z^{k\chi(\s)})^k}
= \prod_k 
 \frac
 {(1-\z^k x)^{k\chi(\s)\i}}
 {(1-\z^k)^{k\chi(\s)\i}}  
=P_\z(X)^{\chi(\s)\i}\,,
$$ 
where all products are over~$k \not\equiv 0 \mod \nn$ and all calculations are modulo 
$H^{\times\nn}$.}
\end{proof}

%\begin{remark}
%When $\nn$ is not prime to~6, then we could also make the calculations above 
%work after replacing the right-hand side of~\eqref{defRz2} by 
%$P_\z(X) = \frac{D_\z(x)}{D_\z(1)}$. (When~$(\nn,6)=1$ this is not 
%necessary since an elementary calculation shows that 
%then~$D_\z(1)\in\Q(\z)^\nn$.)
%\end{remark}

We extend the map $P_\z$ to the free abelian group~$Z(F)=\Z[\P^1(F)]$
by linearity as in~\eqref{defRz}, \changed{with~$P_{\z}(\infty) = P_{\z}(0)^{-1} = D_{\z}(1)$.}
%with $P_\z(0):=P_z(1)=P_\z(\infty)=0$.
% with $P_\z(X)$ set equal to~0 for $X\in\{0,1,\infty\}$.

\subsection{The map \texorpdfstring{$R_\z$}{Rz}}
\label{sub.Rz}
\changed{Let~$w_F = w_1(F)$ denote the number of roots of unity in~$F$.
The assumption in Theorem~\ref{thm.R} that~$F$ contains no non-trivial~$n$th roots
of unity is the assumption that~$(n,w_F) = 1$.}
The next proposition associates an element 
$R_\z(\xi) \in (F_\nn^{\times}/F_\nn^{\times \nn})^{\chi\i}$ to every element of 
\changed{$A(F;\Z/n\Z)$} as long as $(\nn,w_F)=1$. 
\frankchanged{More generally,
%In Section~\ref{sub.5term}, we show~$R_{\z}$ extends
%to a map on Bloch groups.
when~$(n,w_F) > 1$, we may define an element~$R_{\z}(\xi)^{w_F}
\in (F_\nn^{\times}/F_\nn^{\times \nn})^{\chi\i}$ which coincides with the~$w_F$th power
of~$R_{\z}(\xi)$ whenever~$(n,w_F)=1$.}
%\changed{More generally, one can define~$R_{\z}(\xi)^{w_F}$ for any~$\nn$.
 % (We shall upgrade~$R_{\z}$ to a map on Bloch groups in Section~\ref{sub.5term}.)}
Recall the group $A(F;\Z/\nn\Z)$
from subsection~\ref{sub.intro.bloch}.

\begin{proposition}
\label{prop.Pdef} Let~$\xi \in A(F;\Z/\nn\Z)$. 
\begin{enumerate}[label=(\alph*)]
\item \label{newparta}
The image of $P_\z(\xi)^{w_F}$ lifts
to~$F_\nn^\times/F_\nn^{\times \nn}$.  
\item \label{newpartb}
The image of~$P_{\z}(\xi)^{w_F}$ admits a unique
lift to~$F_\nn^\times/F_\nn^{\times \nn}$ on which~$G$ acts by~$\chi^{-1}$,
\frankchanged{that we denote by~$R_{\z}(\xi)^{w_F}$.}
If $\nn$ is prime to~$w_F$, then~$P_{\z}(\xi)$ itself admits a
unique lift~$R_\z(\xi) \in (F^{\times}_{\nn}/F^{\times \nn}_{\nn})^{\chi\i}$.
\end{enumerate}
\end{proposition}

\begin{proof}
For part~\ref{newparta}, by Hilbert~$90$ and inflation-restriction, there is 
a commutative diagram:
$$
\begin{diagram}
H^1(\Phi,\mu) & \rTo & H^1(F_\nn,\mu) & \rTo & H^1(H,\mu)^{\Phi} 
& \rTo^{\delta} & H^2(\Phi,\mu) \\
 & & \dEquals & & \dEquals & & \\
  & & F_\nn^{\times}/F_\nn^{\times \nn} 
& \rTo & \left(H^{\times}/H^{\times \nn}\right)^{\Phi} & & 
\end{diagram}
$$
That is, there is an obstruction to descending 
from~$\left(H^{\times}/H^{\times \nn}\right)^{\Phi}$ 
to~$F_\nn^{\times}/F_\nn^{\times \nn}$ which lands in~$H^2(\Phi,\mu)$.

We now claim that there is a commutative diagram as follows:
$$
\begin{diagram}
Z(F) & \rTo^{P_\z \qquad} &  
(H^{\times}/H^{\times\nn})^\Phi \\ \dTo^d  &  & \dTo_\delta \\
\wed(F^{\times}/F^{\times \nn}) & \rInto^{\quad \cup} & H^2(\Phi,\mu) \;,
\end{diagram}
$$
where the left vertical map is the one defined in~\eqref{defd}
and the bottom horizontal map is the map induced by the cup product from the 
isomorphism $F^{\times}/F^{\times \nn} \rightarrow H^1(\Phi,\mu)$ of 
Lemma~\ref{lemma:easy}. Note that the cup product is more naturally a 
map~$\wed H^1(\Phi,\mu)\rightarrow H^2(\Phi,\mu^{\otimes 2})$, 
but can be interpreted as in the theorem by using the 
trivialization~$\mu \simeq \Z/\nn\Z \simeq\mu^{\otimes 2}$ defined by 
the choice of the root of unity~$\z$.

We now show that the above diagram commutes. By linearity, it suffices to 
prove this for elements~$\xi$ of the form~$[X]$. Write~$X=x^\nn$ 
and~$1-X=y^\nn$. For~$Z \in F^{\times}/F^{\times \nn}$ and~$z^n = Z$, let 
(following Lemma~\ref{lemma:easy}), \changed{we may write}
$$
\sigma(z) = \z^{\phi(z,\sigma)} z.
$$ 
\changed{where~$\phi(z,\sigma)$ is defined to satisfy~$\z^{\phi(z,\sigma)} = \phi(z)(\sigma) \in \mu$.}
By definition, we have~$P_{\z}([X]) = \changed{D_\z(x)/D_{\z}(1)}$ modulo~$\nn$th powers. 
\changed{Since~$D_{\z}(1) = \z^{\nn/3}$  already lifts to~$F^{\times}_{\nn}/F^{\times \nn}_{\nn}$,
the obstruction to lifting~$P_{\z}([X])$ is the same as the obstruction
to lifting~$\wD_{\z}(x)$.}
Lifting~$\wD_{\z}(x)$ amounts to finding an 
element~$u \in H^{\times}$ such that~$\wD_\z(x)/u^{\nn}  \in F_\nn^{\times}$. 
\changed{In light of equation~(\ref{referee}),}
such a~$u$ would necessarily satisfy
\be
\label{defineh}
\left(\frac{\sigma u}{u}\right)^\nn = \frac{\sigma \wD_\z(x)}{\wD_\z(x)}
\= \frac{\wD_\z(\z^{\phi(x,\sigma)} x)}{\wD_\z(x)} \=
\biggl(\,\prod_{k=0}^{\phi(x,\sigma) - 1} \frac{1- \z^k x}{y}\,\biggr)^\nn.
\ee
The expression inside the~$\nn$th power is determined exactly 
modulo~$\mu = \langle \z \rangle$.  Hence we may define a cocycle
$$
h = h_{X}: \Phi \;\to\; H^\times/\mu\,, \qquad h(\s)\;:=\;
\prod_{k=0}^{\phi(x,\sigma)-1}\frac{1- \z^k x}{y} \;.
$$
\changed{To verify that~$h$ is a cocycle, it suffices to show 
that~$h(\sigma \tau) = h(\sigma) \sigma h(\tau) \in H^{\times}/\mu$. This identity holds
for~$h^n$  thought of as valued in~$H^{\times}$ by equation~(\ref{defineh}),
since in that formula it is manifestly given by the coboundary~$\sigma \mapsto \sigma 
\wD_\z(x)/\wD_\z(x)$. But this implies immediately that~$h$ itself satisfies this
equation modulo~$\mu$.}
Thus~$h$ gives an element of~$H^1(\Phi,H^\times/\mu)$, which by consideration 
of the exact sequence
$$
H^1(\Phi,H^{\times}) \longrightarrow H^1(\Phi,H^{\times}/\mu) 
\longrightarrow H^2(\Phi,\mu)
$$
maps to~$H^2(\Phi,\mu)$. This is actually an injection, because the 
first term vanishes by Hilbert~90. This is the image of~$\delta$; 
explicitly, the class~$\delta(h) \in H^2(\Phi,\mu)$ \changed{(or its inverse,
depending on one's convention for the boundary map)} is given by
\begin{align*}
\delta(h)(\sigma,\tau) =  & 
\ \frac{h(\sigma \tau)}{h(\sigma) \sigma h(\tau)} \\
= & 
\ \frac{1}{h(\sigma) \sigma h(\tau)} 
\prod_{k=0}^{\phi(x,\sigma) + \phi(x,\tau) - 1} \frac{1 - \z^k x}{y} \\
= & 
\ \frac{1}{h(\sigma)  \sigma h(\tau)} 
\prod_{k=0}^{\phi(x,\sigma)  - 1} \frac{1 - \z^k x}{y}
\prod_{k=0}^{\phi(x,\tau)  - 1} \frac{1 - \z^k \z^{\phi(x,\sigma)} x}{y}
\\
 = & \ \frac{1}{h(\sigma) \sigma h(\tau)} 
\prod_{k=0}^{\phi(x,\sigma)  - 1} \frac{1 - \z^k x}{y}
\prod_{k=0}^{\phi(x,\tau)  - 1} 
\frac{1 - \z^k \z^{\phi(x,\sigma)} x}{\z^{\phi(y,\sigma)} y} \cdot
\z^{\phi(y,\sigma)} 
\\
 = & \ \z^{\phi(x,\tau) \phi(y,\sigma)}
\end{align*} 

On the other hand, the class in~$H^1(\Phi,\mu)$ associated to~$X = x^\nn$ 
is the map $\tau \mapsto \z^{\phi(x,\tau)}$, and the class associated 
to~$1-X = y^\nn$ is the map $\sigma \mapsto \z^{\phi(y,\sigma)}$, and the 
exterior product of these two classes in~$H^2(\Phi,\z)$ is 
precisely~$\delta(h)$. The fact that the cup product gives an injection is 
an easy fact about the cohomology of abelian groups of exponent~$\nn$.
This concludes the proof of part~\ref{newparta}.

For part~\ref{newpartb}, suppose that~$\xi \in A(F;\Z/\nn\Z)$. By the argument above, 
there certainly exists an element in~$F_\nn^{\times}/F_\nn^{\times \nn}$ which 
maps to~$P_\z(\xi)$.
Let~$M$ denote the image of $F_\nn^{\times}/F_\nn^{\times \nn}$ in 
$(H^{\times}/H^{\times \nn})^{\Phi}$, and let~$S = F^{\times}/F^{\times \nn}$.
We have a short exact sequence as follows:
$$
0 \longrightarrow S \longrightarrow F_\nn^{\times}/F_\nn^{\times \nn}
\longrightarrow M \longrightarrow 0.
$$
Taking~$\chi^{-1}$-invariants is the same as tensoring with~$\Z/\nn\Z(1)$
and taking invariants. Hence there is an exact sequence
$$
\left(F_\nn^{\times}/F_\nn^{\times \nn}\right)^{\chi^{-1}} \longrightarrow
M^{\chi^{-1}} \longrightarrow H^1(G,S(1)).
$$
In particular, the obstruction to lifting to a~$\chi^{-1}$-invariant
element lies in~$H^1(G,S(1))$, and it 
suffices to prove that this group is annihilated by~$w_F$. 
By construction, the module~$S$ is trivial as a~$G$-module,
and hence the action of~$G$ on~$S(1)$ is via the character~$\chi$.
Sah's Lemma (\cite[Lem.8.8.1]{Lang}) 
%\rmk{reference correct? abbreviation ``Lem."?} 
implies that the self-map of
$H^1(G,S(1))$ induced by $g-1$ for any $g \in Z(G) = G$ is the zero map. 
On the other hand, since~$\chi: G \rightarrow (\Z/\nn\Z)^{\times}$
is the cyclotomic character, the greatest common divisor 
of~$\chi(g) - 1$ for~$g \in G$ is~$w_F  \Z/\nn\Z$. In particular,
the group is annihilated by~$w_F$. The result follows.
\end{proof}

\begin{remark}
\label{rem.effectiveR}
%I think this remark is not vacuous: it uses 
%the $\chi^{-1}$-invariance of $P/S^\nn$}
%(This will play a role in Section~\ref{sec.nahm}.)
%Here is a special case which will be relevant to us in the subsequent
%Section~\ref{sec.nahm}. 
Suppose $(w_F,\nn)=1$, and let $P \in H^\times$ be a representative
of $P_\z(\xi) \in H^\times/H^{\times \nn}$. Then the construction of the element 
$R_\z(\xi)$ whose existence is asserted by Proposition~\ref{prop.Pdef}
reduces to the problem of finding $S \in H^\times$ such that 
\begin{enumerate}[label=(\alph*)]
\item
$P/S^\nn \in F_\nn^\times$, and
\item
the image of $P/S^\nn$ in $F_\nn^\times/F_\nn^{\times \nn}$ lies in the 
$\chi^{-1}$-eigenspace, 
\end{enumerate}
since then $R_\z(\xi) = P/S^\nn \in (F_\nn^\times/F_\nn^{\times \nn})^{\chi^{-1}}$. 
In practice, $S$ will be constructed via a Hilbert~90 argument as an
additive Galois average, and the difficulty is ensuring that~$S \neq 0$.
See Section~\ref{sec.nahm}, where this is done for a particular~$P$
constructed as a radial limit of a Nahm sum. 
\end{remark}

\subsection{\changed{Compatibilities}}
\label{sub.primepowers}

In this section,  we discuss the compatibility of the map $R_\z$
with \frankchanged{respect to~$\nn$, and in particular we compare~$R_{\z}$
to~$R_{\z^q}$ for any divisor~$q$ of~$\nn$.}
This will be important in Section~\ref{sec.cRcompare},
where we consider the relation of our map and the Chern class in $K$-theory.
\changed{We also discuss the compatibility of~$R_{\z}$ with respect to a change of field.}

\begin{lemma}
\label{lemma:compatibility} 
Let $(\nn,w_F)=1$ and~$\z=\z_\nn$ as usual. Then
the following compatibilities hold:
\begin{enumerate}
\item  \label{lemma:compatibilitytwo}
If~$(\nn,k) = 1$, then $R_{\z^k}(X) = R_\z(X)^{k^{-1}}$.
\item 
Let~$\nn = qr$,  and let~$\z_{r} = \z^{q}_\nn$. Then the image of $R_{\z_\nn}(X)$ 
modulo~$r$th powers is equal to the image of $R_{\z_{r}}(X)$ under the map
\be 
\label{warning}
\left(F_r^{\times}/F_r^{\times r}\right)^{\chi^{-1}} \rightarrow 
\left(F_\nn^{\times}/F_\nn^{\times r}\right)^{\chi^{-1}}
\ee
induced by the inclusion.
\end{enumerate}
\end{lemma}

\changed{We note in passing that the map~(\ref{warning}) is not always injective (a
fact exploited in the proof of Lemma~\ref{invertible}).}

\begin{proof} 
The first statement reflects the fact that~$g R_{\z}  = R_{g (\z)}$
for~$g \in G = \Gal(F_\nn/F)$. For the second claim, 
\changed{we first note by
Lemma~\ref{lem.Rz}\ref{partb.Rz}  
  that~$D_{\z_n}(1) = \z^{n/3}_n \equiv \z^{r/3}_r = D_{\z_r}(1)$,
  where the equivalence is modulo~$r$th powers. (Either~$3|r$ in which
  case both sides are literally equal, or~$(3,r) = 1$ and~$3$rd roots of unity
  are~$r$th powers.)
Then}
we calculate
\begin{align*} 
\changed{D_{\z_n}(1) P_{\z_n}(X)}
&\= \prod_{k\mod\nn}\bigl(1-\z_n^kx\bigr)^k
\= \prod_{\substack{i\mod q\\ j\mod r}}\bigl(1-\z_n^{ri+j}x\bigr)^{ri+j} \\
&\;\equiv\;  \prod_{\substack{i\mod q\\ j\mod r}} \bigl(1-\z_q^i\z_n^jx\bigr)^j
%  &\;\equiv\;  
% \prod_{i\mod r}\biggl(\prod_{j\mod q}\bigl(1-\z_q^j\z_n^ix\bigr)\biggr)^i
\= \prod_{j\mod r}\bigl(1-\z_r^jx^q\bigr)^j \= 
\changed{D_{\z_r}(1) P_{\z_r}(X)},
\end{align*}
where the congruence is modulo~$r$th powers.
% it suffices to prove the result for~$D_\nn(x^{1/q})$ and~$D_r(x)$, 
% where we set 
% $\z_{r} = \z^{q}_\nn$ and~$\z_{q} = \z^{r}_\nn$ and write
% $D_\nn$ and $D_r$ for $D_{\z_\nn}$ and $D_{\z_r}$, respectively. We have
% $$ D_\nn(x^{1/q}) =  \prod_{k=0}^{rq-1} (1 - x^{1/q} \z^k_\nn)^k
% =\prod_{i=0}^{r-1} 
% \prod_{j=0}^{q-1} (1 - x^{1/q} \z^{i + r j}_\nn)^{i + r j}. $$
% Modulo $r$th powers, this is equal to
% \begin{align*}
% \prod_{i=0}^{r-1} \prod_{j=0}^{q-1}   (1 - x^{1/q} \z^{i + r j}_\nn)^{i}
% = & \  
% \prod_{i=0}^{r-1} \prod_{j=0}^{q-1} (1 - x^{1/q} \z^{i}_\nn \z^{j}_q)^{i}
% \\  = & \ \prod_{i=0}^{r-1}   (1 - x  \z^{iq}_\nn)^i \\
%  = & \ \prod_{i=0}^{r-1}   (1 - x  \z^{i}_{r})^i \\  = & \ D_r(x). 
% \end{align*}
\end{proof}

Next, we discuss a reduction of the map $P_{\z_\nn}$ to the case that $\nn$ is 
a prime power.

\begin{lemma}
\label{lem.mab}
Let $\nn=ab$ with $(a,b)=1$ and $\z$ a primitive $\nn$th root of unity.
If $X \in A(F;\Z/\nn\Z)$, let $u_\nn=R_{\z}(X)$, 
$u_a=R_{\z^b}(X)$ and $u_b=R_{\z^a}(X)$. Then $u_\nn$ determines 
and is uniquely determined by $u_a$ and $u_b$.  
\end{lemma}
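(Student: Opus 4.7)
The plan is to combine the Chinese Remainder Theorem with Lemma~\ref{lemma:compatibility}(2), and then verify injectivity of certain inclusion maps on $\chi^{-1}$-eigenspaces.

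Since $(a,b)=1$, for any abelian group $A$ the natural map $A/\nn A\to A/aA\oplus A/bA$ is an isomorphism. Applying this with $A=F_\nn^\times$ and restricting to $\chi^{-1}$-eigenspaces under the factorization $\chi=(\chi_a,\chi_b)$ coming from $(\Z/\nn\Z)^\times\cong(\Z/a\Z)^\times\times(\Z/b\Z)^\times$, one obtains a $G$-equivariant isomorphism
$$
\bigl(F_\nn^{\times}/F_\nn^{\times\nn}\bigr)^{\chi^{-1}}
\;\cong\;
\bigl(F_\nn^{\times}/F_\nn^{\times a}\bigr)^{\chi_a^{-1}}
\;\oplus\;
\bigl(F_\nn^{\times}/F_\nn^{\times b}\bigr)^{\chi_b^{-1}}.
$$
Applying Lemma~\ref{lemma:compatibility}(2) with $(r,q)=(a,b)$ and then with $(r,q)=(b,a)$ identifies the two components of $u_\nn$ on the right-hand side with the images of $u_a=R_{\z^b}(X)$ and $u_b=R_{\z^a}(X)$ under the natural inclusions $\iota_a,\iota_b$ from $F_a,F_b$ into $F_\nn$. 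This shows that $(u_a,u_b)$ determines $u_\nn$ via the inverse CRT isomorphism.

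For the converse direction, I need $\iota_a$ and $\iota_b$ to be injective on the relevant eigenspaces. By Kummer theory and inflation-restriction, and using that $G$ is abelian (as a subgroup of $(\Z/\nn\Z)^{\times}$) so that the conjugation action on $\Gal(F_\nn/F_a)$ is trivial, the kernel of the non-equivariant map $F_a^{\times}/F_a^{\times a}\hookrightarrow F_\nn^{\times}/F_\nn^{\times a}$ is identified with $\Hom(\Gal(F_\nn/F_a),\mu_a)$, carrying $G$-action via $\chi_a$ on $\mu_a$. An element $f$ in the $\chi_a^{-1}$-eigenspace then satisfies $(\chi_a(\sigma)^2-1)\,f=0$ in $\mu_a$ for every $\sigma\in G$.

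The main obstacle is to conclude $f=0$ from this condition, and this is where the standing hypothesis $(\nn,w_F)=1$ enters. For each prime $p\mid a$, it forces $\z_p+\z_p^{-1}\notin F$ and hence $[F(\z_p):F]>2$, so $\chi_a(G)$ reduces modulo $p$ to a subgroup of $(\Z/p\Z)^{\times}$ of order $>2$; one may therefore choose $\sigma\in G$ with $\chi_a(\sigma)\not\equiv\pm 1\pmod p$, making $\chi_a(\sigma)^2-1$ a unit modulo every power of $p$ dividing $a$. A prime-by-prime CRT argument on $\Z/a\Z$ then yields $\gcd_{\sigma}(\chi_a(\sigma)^2-1,\,a)=1$, forcing $f=0$. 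The symmetric argument handles $\iota_b$, and combining everything gives the claimed bijection between $u_\nn$ and the pair $(u_a,u_b)$.
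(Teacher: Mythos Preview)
Your treatment of the direction ``$(u_a,u_b)$ determines $u_\nn$'' is correct and is essentially the paper's argument: both use Lemma~\ref{lemma:compatibility}(2) together with the Chinese remainder decomposition of $F_\nn^{\times}/F_\nn^{\times\nn}$. The paper's converse is much more modest than yours: the phrase ``the converse is already shown'' refers back to the first sentence of the proof, which says that the image of $u_\nn$ modulo $a$th powers \emph{equals} the image of $u_a$ under $F_a^{\times}/F_a^{\times a}\to F_\nn^{\times}/F_\nn^{\times a}$, and similarly for~$b$. No injectivity of this inclusion is asserted or needed for the lemma as used in the paper.

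Your attempt to prove the stronger claim that $\iota_a$ is injective on $\chi_a^{-1}$-eigenspaces contains a genuine error. You assert that the standing hypothesis $(\nn,w_F)=1$ forces $\z_p+\z_p^{-1}\notin F$ for each prime $p\mid a$, hence $[F(\z_p):F]>2$. But $w_F$ is the number of roots of unity in~$F$, so $(\nn,w_F)=1$ only gives $\z_p\notin F$, not the stronger conclusion. Take $F=\Q(\sqrt5)$, $a=5$, $b=11$, $\nn=55$: then $w_F=2$ so $(\nn,w_F)=1$, yet $\z_5+\z_5^{-1}=\tfrac{-1+\sqrt5}{2}\in F$ and $[F(\z_5):F]=2$, so $\chi_a(G)=\{\pm1\}\subset(\Z/5\Z)^{\times}$ and $\chi_a(\sigma)^2-1=0$ for every~$\sigma$. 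Since $\Gal(F_\nn/F_a)\cong(\Z/11\Z)^{\times}$ has order~$10$, the kernel $\Hom(\Gal(F_\nn/F_a),\mu_5)\cong\Z/5\Z$ is nonzero and lies entirely in the $\chi_a^{-1}$-eigenspace, so $\iota_a$ is genuinely not injective there. The hypothesis under which your argument would go through is $(\nn,w_2(F))=1$, which is strictly stronger than what is assumed.
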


\begin{proof}
Part (2) of Lemma~\ref{lemma:compatibility} shows that the image of $u_\nn$ in
$F_\nn^\times/F_\nn^{\times a}$ is the image of $u_a$ under the natural map
$$
F_a^\times/F_a^{\times a} \to F_\nn^\times/F_\nn^{\times a} \,.
$$
Equivalently, $u_a$ determines $u_\nn$ up to an $a$th power, and similarly 
$u_b$ determines $u_\nn$ up to a $b$th power. This is enough to determine 
$u_\nn$ completely since $a$ and $b$ are coprime. The converse has already been shown.
\end{proof}

\begin{remark}
Both lemmas hold also for $(\nn,w_F)>1$
if we replace $R_\z$ by $R^{w_F}_{\z}$.
\end{remark}

\changed{
We also have:
\begin{lemma}
\label{lemma:compatibility2} 
Let~$E/F$ be a field extension, and assume that~$(n,w_E) = 1$ and~$\z = \z_{\nn}$.
Then the following diagram commutes:
\begin{diagram}
B(F)/n B(F) & \rTo_{R_{\z}} & \left(F_\nn^{\times}/F_\nn^{\times r}\right)^{\chi^{-1}} \\
\dTo & & \dTo \\
B(E)/n B(E) &  \rTo_{R_{\z}} & \left(E_\nn^{\times}/E_\nn^{\times r}\right)^{\chi^{-1}}.
\end{diagram}
\end{lemma}

\begin{proof}  
By construction, the maps~$P_{\z}(\xi)$  are compatible where the targets
on the RHS are replaced by the corresponding universal Kummer extensions.  But by the uniqueness of the 
lift~$R_{\z}(\xi)$ (Proposition~\ref{prop.Pdef}(b)), the diagram commutes.
\end{proof}
}

\subsection{The 5-term relation}
\label{sub.5term}

In this section,  we use a result of Kashaev, Mangazeev and Stroganov 
to show that the map $R_\z$ satisfies the $5$-term relation, and 
consequently descends to a map of the group $B(F;\Z/\nn\Z)$.

\begin{theorem}
\label{thm.5term} 
Let $F$ be a field and $F_\nn=F(\z)$, where $\z$ is a root of unity 
of order~$\nn$ prime to~$w_F$ and to the characteristic prime of~$F$. Then 
the map~$R_\z$ vanishes on the subgroup 
$C(F) \subset A(F;\Z/\nn\Z) \subset Z(F)$ generated by the~$5$-term 
relation, and therefore induces a map
$$
B(F) \longrightarrow  B(F)/\nn B(F) \longrightarrow B(F;\Z/\nn\Z)
\stackrel{R_\z}{\longrightarrow} 
\bigl(F_\nn^{\times}/F_\nn^{\times \nn}\bigr)^{\chi^{-1}} \,.
$$
\end{theorem}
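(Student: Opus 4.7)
The plan is to establish the stronger identity
\be
\label{planpent}
P_\z(\xi_{X,Y}) \= 1 \quad\text{in}\; H^\times/H^{\times \nn}
\ee
for every pair $X,Y \in \P^1(F)$. Since $R_\z(\xi_{X,Y})$ is defined (Proposition~\ref{prop.Pdef} and Remark~\ref{rem.effectiveR}) as the unique lift of $P_\z(\xi_{X,Y})$ to the $\chi^{-1}$-eigenspace of $F_\nn^\times/F_\nn^{\times \nn}$, the triviality~\eqref{planpent} forces $R_\z(\xi_{X,Y})$ to be trivial as well: one simply takes $S$ to be any $\nn$th root in $H^\times$ of a chosen representative of $P_\z(\xi_{X,Y})$, so that $P/S^\nn = 1 \in F_\nn^\times$ trivially lies in the eigenspace. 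Vanishing on the generators gives vanishing on all of $C(F)$; combined with the tautological vanishing on $\nn Z(F)$ (since $P_\z(\nn\eta) = P_\z(\eta)^\nn$ is automatically an $\nn$th power), the map $R_\z$ descends to $B(F;\Z/\nn\Z) = A(F;\Z/\nn\Z)/(\nn Z(F) + C(F))$ and thence to $B(F)/\nn B(F)$ as required.

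To establish~\eqref{planpent}, fix $\nn$th roots $x,y \in H^\times$ of $X$ and $Y$, and use the natural choices $y/x$, $(1-x\i)/(1-y\i)$, and $(1-x)/(1-y)$ as roots for the remaining three arguments of $\xi_{X,Y}$. By Lemma~\ref{lem.Rz}(a), each $D_\z$ factor is independent of the root choice modulo $H^{\times \nn}$, so the resulting five-fold product is a well-defined rational function in $x$ and $y$. The identity~\eqref{planpent} then reduces to a pentagon identity for the cyclic quantum dilogarithm --- precisely the state-sum identity of Kashaev--Mangazeev--Stroganov~\cite{KMS}. Extracting their identity and rearranging, the alternating product of the five $D_\z$-factors equals an explicit prefactor that, under the standing hypothesis $(\nn,6 w_F)=1$, can be verified to be an $\nn$th power in $H^\times$.

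The main obstacle will be this last verification. The KMS identity is naturally stated as an operator identity in a finite-dimensional cyclic representation of the Weyl algebra, so one must translate it into a multiplicative identity in a Kummer extension, and then carefully track the spurious prefactor that emerges. The coprimality assumption $(\nn, 6 w_F) = 1$ is used precisely to absorb this prefactor --- involving roots of unity, the small primes $2,3$, and a scalar akin to the $(-1)^{(\nn-1)/2} \nn$ encountered in the parenthetical calculation in the proof of Lemma~\ref{lem.Rz}(b) --- into $H^{\times \nn}$. Once~\eqref{planpent} is established, the equivariance statements~(c) and (d) of Lemma~\ref{lem.Rz} guarantee that the trivial lift automatically lies in the $\chi^{-1}$-eigenspace, completing the argument and the claimed factorization.
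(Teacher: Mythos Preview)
Your overall strategy matches the paper's: reduce to showing $P_\z(\xi_{X,Y})=1$ in $H^\times/H^{\times\nn}$ via the KMS identity, then invoke the uniqueness in Proposition~\ref{prop.Pdef} to conclude $R_\z(\xi_{X,Y})=1$. That part is sound.

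There is, however, a concrete error in your setup. The ``natural choices'' $(1-x^{-1})/(1-y^{-1})$ and $(1-x)/(1-y)$ are \emph{not} $\nn$th roots of $(1-X^{-1})/(1-Y^{-1})$ and $(1-X)/(1-Y)$: from $x^\nn=X$ one gets $1-x^\nn=1-X$, but certainly not $(1-x)^\nn=1-X$. So plugging these elements into $D_\z$ does not compute $P_\z$ of the corresponding arguments, and the five-fold product you assemble is not $P_\z(\xi_{X,Y})$ at all. Lemma~\ref{lem.Rz}(a) only says the answer is independent of \emph{which} $\nn$th root you pick; it does not allow you to substitute something that is not an $\nn$th root.

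The paper repairs this by introducing an \emph{independent} $\nn$th root $z$ of $Z:=(1-X)/(1-Y)$; then $yz/x$ is a genuine $\nn$th root of $YZ/X=(1-X^{-1})/(1-Y^{-1})$, and these are precisely the arguments appearing in the KMS identity (equation~(C.7) of~\cite{KMS}). That identity is already a scalar equation
\[
(\z y)^{\nn(1-\nn)/2}\,f(x,y\,|\,z)^\nn \;=\; \frac{D_\z(1)\,D_\z(y\z/x)\,D_\z(x/yz)}{D_\z(1/x)\,D_\z(y\z)\,D_\z(\z/z)}\,,
\]
whose left-hand side is visibly an $\nn$th power in~$H^\times$, so no translation from an operator formulation is needed and the ``spurious prefactor'' you worry about is already packaged as an $\nn$th power. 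Reducing modulo $H^{\times\nn}$ and applying Lemma~\ref{lem.Rz}(a),(b) gives the 5-term relation for~$P_\z$ directly. Once you correct the choice of roots along these lines, the remainder of your argument goes through exactly as in the paper.
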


\begin{proof}
Denote by~$H$ the universal Kummer extension as before.  Then it
suffices to show that the appropriate product of the functions~$D_\z$ is 
a perfect~$\nn$th power in~$H$. 

\newfrankchanged{It suffices to consider the case when~$X, Y \ne 0,1,\infty$
and~$X \ne Y$,
since, when~$\nn$ is odd, the groups~$B(F)/\nn B(F)$ and~$B(F;\Z/\nn \Z)$ coincide
with Suslin's Bloch group~$\BB(F)/\nn \BB(F)$  and the group~$\BB(F;\Z/\nn\Z)$ respectively  by Lemma~\ref{suslincomparison}.}
Let $X,\,Y,\,Z\in F^\times$ be related by $Z=(1-X)/(1-Y)$, and choose
$\nn$th roots $x,\,y,\,z$ of $X,\,Y,\,Z$.
Using the standard notation $(x;q)_k=(1-x)(1-qx)\cdots(1-q^{k-1}x)$ 
($q$-Pochhammer symbol) and following the notation of~\cite{KMS} 
(except that they use $w(x|k)$ for $(x\z;\z)_k\i$), we set
$$
f(x,\,y\,|\,z) = \sum_{k=0}^{\nn-1} \frac{(\z y;\z)_k}{(\z x;\z)_k} 
= \sum_{k \mod n} \frac{(\z y;\z)_k}{(\z x;\z)_k}\, 
z^k\;\in H,
$$
where the second equality follows from the relation between~$x$, $y$, and~$z$. 
By Equation (C.7) of~\cite{KMS}, we have
$$
(\z y)^{\nn(1-\nn)/2} \, f(x,\,y\,|\,z)^\nn 
\= \frac{D_\z(1) D_\z(y \z/x) D_\z(x/y z)}
{D_\z(1/x) D_\z(y \z) D_\z(\z/z)}.
$$
% The same relation obviously holds with~$P_\z$ instead of~$D_\z$.  
% If~$2\,|\,\nn$,
% then~$2\,|\,w_F$ because~$F$ has characteristic different from two. Hence 
% we may square both sides. Using the fact (which follows from 
% Lemma~\ref{lemma:easy}) that~$P_\z(\z t)/P_\z(t)$ 
% and~$P_\z(t)^2 P_\z(1/t)^2$ are perfect~$\nn$th powers in~$H$, we deduce that
% $$ \biggl(\frac{P_\z(1) P_\z(y \z/x) P_\z(x/y z)}
% {P_\z(1/x) P_\z(y \z) P_\z(\z/z)} \biggr)^2
% \= \biggl(\frac{P_\z(1) P_\z(y/x) P_\z(x/y z)}
% {P_\z(1/x) P_\z(y) P_\z(1/z)} \biggr)^2 \= 1 \mod H^{\times \nn}.  $$
% Since $P_\z(1) = 1$, this implies that 
%  $$ P^2_\z\left( \left[ \frac{Y}{X}\right] 
% + \left[\frac{X(1-Y)}{Y(1-X)}\right]
% \,-\,\left[\frac{1}{X}\right] - [Y] - \left[\frac{1-Y}{1-X} \right]\right)
%  \= 0\,. $$
% Using the relation between $P_\z(t)$ and $P_\z(1/t)$, we can also write 
% this as
% $$
% P^2_\z\left([X] - [Y] +  \left[ \frac{Y}{X}\right] 
% \,-\, \left[\frac{(1-X^{-1})}{(1-Y^{-1})}\right] \+ \left[\frac{1-X}{1-Y} 
% \right]\right)\=0\,. $$
Considering this modulo ~$\nn$th powers, 
and using 
Lemma~\ref{lem.Rz}, we find
% $$ 1 \= P_\z(X)\,P_\z(Y)\i\,P_\z\Bigl(\frac YX\Bigr)
% \,P_\z\Bigl(\frac{1-X\i}{1-Y\i}\Bigr)\i\,P_\z(Z)\,,$$
$$ 
1 \= P_\z(X)\,P_\z(Y)\i\,P_\z(Y/X)\,P_\z(YZ/X)\i\,P_\z(Z)\,.
$$
This is precisely the 5-term relation for the map~$P_\z$, and the 
uniqueness clause in Proposition~\ref{prop.Pdef} implies the same 5-term 
relation for the map~$R_\z$.
%  wich thus holds in~$H^{\times}/H^{\times n}$
% for~$P^2_{\z}$ and hence for~$P_{\z}$. The corresponding five
% term relation for~$R_{\z}$ thus evaluates to the unique element
% in~$\left(F_\nn^{\times}/F_\nn^{\times \nn}\right)^{\chi^{-1}}$ which
% maps to~$1 \in  H^{\times}/H^{\times n}$, and hence~$R_{\z}$
% also satisfies the~$5$-term relation.
\end{proof}

\subsection{An eigenspace computation}
\label{sub.eigenspace}

As in  Section~\ref{sub.intro.bloch}, we write $G=\Gal(F_\nn/F)$,
identified with a subgroup of $(\Z/\nn\Z)^\times$ via the map $\chi$ 
of Equation~\eqref{eq.chi}.
Since $F_\nn^\times/F_\nn^{\times \nn}$ is an $\nn$-torsion $G$-module, the 
$\chi^{-1}$ eigenspace makes sense and is given by
$$
\left(F_\nn^\times/F_\nn^{\times \nn}\right)^{\chi^{-1}}=
\{ x \in F_\nn^\times/F_\nn^{\times \nn} \, | \sigma x = x^{\chi(\sigma^{-1})} \,,
\text{for all} \, \sigma \in G \}\,,
$$
where $x^{\chi(\sigma^{-1})}$ is computed using any lift of 
$\chi(\sigma^{-1}) \in (\Z/\nn\Z)^\times$ to $\Z$.

In characteristic zero, one can also consider the action of $G$ on 
$M \otimes_\Z R$, where $R$ is a $\Z[G]$ module that contains the eigenvalues 
of $\sigma \in G$. For example, one can take $M=\calO_\nn^\times$ and $R=\C$.
If $\nn=p$ is prime, then one can take $R=\Z_p$, which contains the $(p-1)$th
roots of unity. In particular, if $\nn=p$, then one can define 
$\left(M \otimes_\Z \Z_p\right)^{\chi^{-1}}$, which will have the property that
$$
\left(M \otimes_\Z \Z_p\right)^{\chi^{-1}} \otimes \Z/p\Z = 
\left(M/pM\right)^{\chi^{-1}} \,.
$$

\begin{proposition}
\label{prop.calOm}
Suppose that~$F$ is disjoint from~$\Q(\z_\nn)$.
\begin{enumerate}[label=(\alph*)]
\item \label{parta.calOm}
There exists an isomorphism of $G$-modules
\begin{equation}
\label{eq.calOm}
\left(\calO_\nn^\times \otimes \C \right)^{\chi^{-1}} =\C^{r_2(F)} \,.
\end{equation}
\item \label{partb.calOm} If~$\nn=p$ is prime, so 
that~$\chi: G \rightarrow (\Z/p\Z)^\times$ admits a
Teichm\"{u}ller lift to~$\Z^{\times}_p$, then 
$$
\mathrm{rank}_{\Z_p} \left(\calO_\nn^\times \otimes \Z_p \right)^{\chi^{-1}}
=r_2(F)\,.
$$
If in addition $\chi$ and~$\chi^{-1}$ are distinct characters of~$G$, then
$$
\left(\OL_p^{\times}/\OL_p^{\times p}\right)^{\chi^{-1}} = 
\left(\Z/p\Z\right)^{r_2(F)} \,.
$$
\end{enumerate}
\end{proposition} 

\begin{proof}
Part~\ref{partb.calOm} follows easily from part~\ref{parta.calOm} and the above discussion, together 
with the fact that if $\chi \neq \chi^{-1}$ then the torsion in the unit 
group (which just comprises roots of unity) is in the~$\chi$-eigenspace 
and not the~$\chi^{-1}$-eigenspace.

For~\ref{parta.calOm}, let $\wF$ be the Galois closure of $F$ over $\Q$ and let 
$\Gamma=\Gal(\wF/\Q)$. By assumption, with $\wF_\nn=\wF(\z_\nn)$, we have 
$\Gal(\wF_\nn/\Q)=\Gamma\times G=\Gamma\times \left(\Z/\nn\Z\right)^\times$.
From the proof of Dirichlet's unit theorem, the unit group of $\wF_\nn$, 
tensored with~$\C$, decomposes equivariantly as
$$
\bigoplus_{W} W^{\dim(W|c=1)}\,,
$$
where $W$ runs over all the non-trivial irreducible representations of 
$\Gamma \times G$ and $c \in \Gamma$ is any complex conjugation, which we may
take to be $(c,-1) \in G \times \left(\Z/\nn\Z\right)^\times$ for a complex
conjugation $c \in \Gamma$. The irreducible representations of $W$ are of
the form $U \otimes V$ for irreducible representations $U$ of $\Gamma$
and $V$ of $G=\left(\Z/\nn\Z\right)^\times$. Note that
$$
\dim(U \otimes V|(c,-1)=1)=\dim(U|c=1)\dim(V|c=1)+
\dim(U|c=-1)\dim(V|c=-1)\,.
$$
If we take the $\chi^{-1}$-eigenspace under the action of the second factor,
the only representation $V$ of $G$ which occurs is $\chi^{-1}$, on which $-1$ 
acts by $-1$, and hence we are left with
$$
\Bigl(\OL_{\wF_\nn}^\times \otimes\C\Bigr)^{\chi^{-1}} \=
\bigoplus_V V^{\dim(V|c=-1)}\,,
$$
where the sum runs over all representations $V$ of $\Gamma$. In particular,
there is an isomorphism in the Grothendieck group of $G$-modules
$$
\bigl[\,\bigl(\OL_{\wF_\nn}^\times \otimes\C\bigr)^{\chi^{-1}}\,\bigr] \+
\bigl[ \OL_{\wF}^\times \otimes\C \bigr] \+ \bigl[ \C \bigr] 
\= \bigl[\C[G]\bigr] \,.
$$
Now take the $\Delta=\Gal(\wF/F)=\Gal(\wF_\nn/F_\nn)$-invariant part and take
dimensions, we obtain the equality
$$
\dim_\C \Bigl(\bigl(\OL_{F_\nn}^\times \otimes\C\bigr)^{\chi^{-1}} \Bigr) 
\+ (r_1+r_2-1) \+ 1\=r_1+2r_2 \,,
$$
where $(r_1,r_2)$ is the signature of $F$. The result follows.
\end{proof}

%%%%%%%%%%%%%%%%%%%%%%%%%%%%%%%%%%%%%%%%%%%%%%%%%%%%%%%%%%%%%%%%%%%%%%%%%%%%
%%%%%%%%%%%%%%%%%%%%%%%%%%%%%%%%%%%%%%%%%%%%%%%%%%%%%%%%%%%%%%%%%%%%%%%%%%%%

\section{Chern Classes for algebraic 
\texorpdfstring{$K$}{K}-theory}
\label{sec.chern}

In this section,  we will define the Chern class map~\eqref{eq.cmap}.

\subsection{Definitions}
\label{sub.definitions.chern}

In the following discussion,  \changed{certain isomorphisms
will depend on a choice of some primitive~$\nn$th root of unity~$\z$. In order to make
this clear, we shall write (in this section only) either~$=_{\z}$ or~$=$ to denote
isomorphisms which respectively do or do not depend on such a choice.}
Let $F$ be a number field, and let $\OL:=\OL_F$ denote the ring of integers 
of $F$. \changed{The Tate twist~$\Z_p(1)$ is defined to be the projective
  limit~$\projlim \mu_{p^n}$ of the~$p^n$th roots
of unity over all~$n$,  and $\Z_p(m):= \Z_p(1)^{\otimes m}$.}
The Galois
group~$G_F$ acts \changed{on~$\Z_p(m)$} via the $m$th power~$\chi^m$ of the cyclotomic character.
For all~$m\ge1$,  there exists a Chern class 
map:
$$
c: K_{2m-1}(F) \rightarrow H^1(F,\Z_p(m))\,.
$$
These Chern class maps arise as the boundary map of a spectral sequence, 
specifically, the Atiyah--Hirzebruch spectral sequence for 
\'etale $K$-theory. These maps were originally constructed by 
Soul\'e~\cite[Section~II]{Soule2}.  
We may compose this map with reduction mod $p^\ii$ to obtain a map:
$$
c: K_{2m-1}(F) \rightarrow H^1(F,\Z/p^\ii \Z(m)).
$$
By the Chinese remainder theorem, we may also piece these maps together 
to obtain a map:
$$
c: K_{2m-1}(F) \rightarrow H^1(F,\Z/\nn\Z(m) )
$$
for any integer~$\nn$.
%In this context, \'{e}tale cohomology groups are just Galois cohomology 
%groups.
%\footnote{For odd~$K$ groups of number fields, the \'etale cohomology group
%which arises can also  be identified with a Galois
%cohomology group. This is a manifestation of the
% isomorphisms~$K_{2m-1}(F) \otimes \Z_p \simeq K_{2m-1}(\OL_F[1/p])
% \simeq K_{2m-1}(\OL_F)$ for~$i > 1$.}
Let $\z$ be a primitive $\nn$th root of unity, $F_\nn=F(\z)$ and write $G$ 
for the (possibly trivial) Galois group $\Gal(F_\nn/F)$. Let~$\mu$ denote 
the module of~$\nn$ roots of unity. There is a canonical injection
$$
\chi: G \rightarrow  \mathrm{Aut}(\changed{\mu}) = (\Z/\nn \Z )^{\times} \,.
$$
By inflation--restriction, there is a canonical map: 
\be
\label{winj}
H^1(F,\Z/\nn\Z(m) ) \rightarrow H^1(F_\nn,\Z/\nn\Z(m) )^{G}
%% =  H^1(F_\nn,\Z/\nn\Z(1))^{\chi^{1-m}}.
\ee
For~$m \ge 1$, there is an invariant $w_m(F)\in\N$ that we will need.
It is defined in terms of Galois cohomology by
$$ 
w_m(F) \= \prod_{p} \bigl|H^0(F,\Q_p/\Z_p(m) )\bigr|\,, 
$$
Note that $w_1(F)$ is equal to $w_F$, the number of 
roots of unity in~$F$, and $w_2(F)$ agrees with~\eqref{eq.w2F} 
\changed{because the action of~$G_F$ on~$\Z/\nn \Z(2) \subset \Q_p/\Z_p(2)$ for any~$\nn$
precisely factors through~$\Gal(F(\z + \z^{-1})/F)$ for a primitive~$\nn$th root
of unity~$\z$.}
We also define
\be
\label{wtwiddle}
\ww \= \prod_{p} \bigl|H^0(\Fgal(\z_p + \z^{-1}_p),\Q_p/\Z_p(1))\bigr|\,. 
\ee
where~$\Fgal$ is the Galois closure of~$F$ over~$\Q$.
Thus $\ww$ is divisible only by the finitely many primes~$p$ such $\z_p$ 
belongs to~$\Fgal(\z_p + \z\i_p)$. If~$p|\ww$ and~$p > 2$, then~$p$ 
necessarily ramifies in~$F$.  Note that $\ww$ is always divisible by~$w_F$.

\begin{lemma}
\label{lemma:sahinj}
The map~\eqref{winj} is injective for integers~$\nn$ prime to~$w_m(F)$.
\end{lemma}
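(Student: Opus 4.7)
The plan is to identify the kernel of~\eqref{winj} via inflation--restriction, and then to kill that kernel by a Sah-type argument entirely parallel to the one used in the proof of Proposition~\ref{prop.Pdef}(b). The five-term exact sequence attached to the normal subgroup $G_{F_\nn} \subseteq G_F$ with quotient $G$ reads
$$0 \longrightarrow H^1\bigl(G,\,H^0(F_\nn,\Z/\nn\Z(m))\bigr) \longrightarrow H^1(F,\Z/\nn\Z(m)) \longrightarrow H^1(F_\nn,\Z/\nn\Z(m))^G,$$
so the kernel of~\eqref{winj} is the initial term. Because $F_\nn$ contains $\mu_\nn$, the group $G_{F_\nn}$ acts trivially on $\Z/\nn\Z(m) = \mu_\nn^{\otimes m}$, and hence $H^0(F_\nn,\Z/\nn\Z(m))$ is all of $\Z/\nn\Z(m)$, regarded as a $G$-module on which $G$ acts through the character $\chi^m$. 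Thus the task reduces to proving that $H^1(G,\Z/\nn\Z(m)) = 0$ for this $G$-action, whenever $\nn$ is coprime to $w_m(F)$.

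To achieve this vanishing I would first use the prime decomposition $\Z/\nn\Z(m) = \bigoplus_{\ell\mid\nn}\Z/\ell^{v_\ell(\nn)}\Z(m)$ of $G$-modules to reduce to showing $H^1(G,\Z/\ell^k\Z(m)) = 0$ for every prime divisor $\ell$ of $\nn$, where $k = v_\ell(\nn)$. Now Sah's lemma (\cite[Lem.~8.8.1]{Lang}) applies: $G$ is abelian, being a subgroup of $(\Z/\nn\Z)^\times$, so every $g\in G$ is central, and therefore multiplication by $\chi(g)^m - 1$ annihilates $H^1(G,\Z/\ell^k\Z(m))$ for each $g$. It is therefore enough to produce a single $g\in G$ with $\chi(g)^m - 1$ a unit in $\Z/\ell^k\Z$, or equivalently with $\chi(g)^m \not\equiv 1 \pmod \ell$.

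Finally, this is where the hypothesis $\gcd(\nn,w_m(F)) = 1$ enters. By definition, the $\ell$-part of $w_m(F)$ is the largest $\ell^\nu$ on which $G_F$ acts trivially via $\chi^m$; the hypothesis $\ell \nmid w_m(F)$ forces $\nu = 0$, so $G_F$ does not act trivially on $\mu_\ell^{\otimes m}$. Hence some element of $G_F$ satisfies $\chi^m \not\equiv 1 \pmod \ell$, and since the action of $G_F$ on $\mu_\nn$ factors through $G$, this descends to a $g\in G$ with the needed unit property. I do not anticipate any serious obstacle here: the whole argument is a standard Sah-style cohomology vanishing, and the only genuine content is translating the numerical hypothesis ``coprime to $w_m(F)$'' into the unit condition one prime at a time.
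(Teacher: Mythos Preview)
Your proposal is correct and follows essentially the same route as the paper: identify the kernel via inflation--restriction as $H^1(G,\Z/\nn\Z(m))$, then apply Sah's lemma to show that multiplication by $\chi(g)^m-1$ annihilates it, and finally use the coprimality hypothesis $(\nn,w_m(F))=1$ to produce, for each prime $\ell\mid\nn$, an element $g$ with $\chi(g)^m\not\equiv 1\pmod\ell$. Your write-up is in fact more careful than the paper's terse version in spelling out the prime-by-prime reduction and in making explicit that the action of $G_F$ on $\mu_\ell$ factors through~$G$.
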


\begin{proof}
The kernel of this map is~$H^1(F_\nn/F,\Z/\nn\Z(m) )$. Assume that this is 
non-zero.
By Sah's lemma, this group is annihilated by~$\chi^m(g) - 1$ for any~$g \in G$.
Equivalently, the kernel has order divisible by~$p | \nn$ if and only if the
elements~$a^m - 1$ are divisible by~$p$ for all~$(a,p) = 1$.
Yet this is equivalent to saying 
that~$H^0(F,\Z/p\Z(m) ) \subset H^0(F,\Q_p/\Z_p(m))$ is non-zero, and 
hence~$p | w_m(F)$.
\end{proof}

\changed{There is an isomorphism~$\Z/\nn\Z(1) = \mu$.
A choice of primitive~$\nn$th root of unity~$\z$ gives a trivialization~$t_{\z}:  
\Z/\nn\Z(1)   =_{\z} \Z/n\Z(m)$ defined by sending~$\z$ to~$\z^{\otimes n}$.
This isomorphism is not, in general, $G$-equivariant, but rather satisfies
\begin{equation}
t_{\z}(\sigma x) =  \chi^{1-m}(\sigma) \sigma t_{\z}(x)
\end{equation}
The dependence of~$t_{\z}$ on~$\z$ is given by~$t_{\z^k} = k^{m-1} t_{\z}$.
}
% coming from the choice
%of a given~$\nn$th root of unity~$\z$. 
By Hilbert~$90$, 
for a number field~$L$, there is a canonical 
isomorphism~$H^1(L,\mu) = L^{\times}/L^{\times \nn}$, and hence
(given~$\z$) an isomorphism
$$
H^1(F_\nn,\Z/\nn\Z(m) )^{G}
 =_{\z}  H^1(F_\nn,\Z/\nn\Z(1))^{\chi^{1-m}}
 = \left(F_\nn^{\times}/F_\nn^{\times \nn}\right)^{\chi^{1-m}} \,,
$$
\changed{where the first isomorphism is induced by~$t_{\z}$.}
Thus~$c$ and~$\z$ give rise to a map:
\be
\label{defc}
c_\z: K_{2m-1}(F) \rightarrow 
\left(F_\nn^{\times}/F_\nn^{\times \nn}\right)^{\chi^{1-m}}\,.
\ee

\subsection{The relation between \'etale cohomology and
Galois cohomology}
\label{section:relations}

There are isomorphisms that can be found in Sections 5.2 and 5.4
of~\cite{Weibel}
\be
\label{useful}
K_{2m-1}(F) \otimes \Z_p \simeq K_{2m-1}(\OL_F[1/p]) \simeq 
K_{2m-1}(\OL_F) \otimes \Z_p
\ee
for~$m > 1$. These isomorphisms are also reflected in the following 
isomorphism between \'etale cohomology groups and Galois cohomology groups:
$$
H^1_{\et}(\OL_F[1/p],\Z_p(m) ) \simeq H^1(F,\Z_p(m) )
$$
for~$m \ge 2$.  In particular, we may also view the Chern class maps
considered above as morphisms
$$c: K_{2m-1}(F) \otimes \Z_p \simeq
K_{2m-1}(\OL_F) \otimes \Z_p \rightarrow H^1_{\et}(\OL_F[1/p],\Z_p(m) ).$$

\begin{theorem}
\label{theorem:surjective2} 
For~$p > 2$, there is an isomorphism
$$
c: K_3(F)  \otimes \Z_p  \simeq K_3(\OL_F) \otimes \Z_p
\rightarrow H^1_{\fet}(\OL_F[1/p],\Z_p(2)).
$$
The rank of~$K_3(F)$ is~$r_2$. 
\end{theorem}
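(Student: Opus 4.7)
The plan is to deduce the isomorphism from the Quillen--Lichtenbaum conjecture (now a theorem thanks to the work of Voevodsky, Rost, Suslin, and many others) and to obtain the rank statement from Borel's theorem. Thus the proof becomes largely a matter of assembling the correct references and matching normalizations.

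First, I would invoke the canonical isomorphism $K_3(F) \otimes \Z_p \simeq K_3(\OL_F) \otimes \Z_p$ already recalled at the start of Section~\ref{section:relations} from~\cite{Weibel}, reducing the problem to showing that the Chern class map
\[
c:\; K_3(\OL_F) \otimes \Z_p \;\longrightarrow\; H^1_{\et}(\OL_F[1/p], \Z_p(2))
\]
is an isomorphism for odd~$p$. The Chern class used in Section~\ref{sub.definitions.chern} is precisely the edge map of the Atiyah--Hirzebruch spectral sequence for étale $K$-theory (following Soulé~\cite{Soule2}). Since $\OL_F[1/p]$ has $p$-adic étale cohomological dimension at most~$2$, this spectral sequence has only two non-vanishing rows in the relevant range and collapses, yielding a canonical identification of $K_3^{\et}(\OL_F[1/p];\Z_p)$ with $H^1_{\et}(\OL_F[1/p], \Z_p(2))$. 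The Quillen--Lichtenbaum theorem at odd primes then identifies algebraic $K$-theory with étale $K$-theory in this range, so the composite coincides with~$c$ and is an isomorphism.

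For the rank statement I would simply cite Borel's theorem, which asserts that $K_{2m-1}(\OL_F)$ has rank $r_2(F)$ when $m$ is even and $r_1(F)+r_2(F)$ when $m\ge3$ is odd. Specialising to $m=2$ yields rank $r_2(F)$ for $K_3(\OL_F)$, hence for $K_3(F)$, since the two agree modulo torsion by the localisation sequence already invoked above.

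The only real obstacle, such as it is, is bookkeeping: namely, checking that the Chern class $c$ defined via the étale $K$-theory spectral sequence in Section~\ref{sub.definitions.chern} agrees under the Quillen--Lichtenbaum isomorphism with the boundary/edge map implicit in Soulé's original construction. As both arise from the same Atiyah--Hirzebruch spectral sequence, this compatibility is essentially built in, and no genuinely new computation is required beyond careful reference-tracking. Hence the proof is complete once these standard facts are cited.
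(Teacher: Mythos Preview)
Your proposal is correct and follows essentially the same route as the paper's own proof sketch: invoke the Quillen--Lichtenbaum conjecture (Voevodsky--Rost) to identify the Chern class map with an isomorphism, and cite Borel's theorem for the rank. The paper additionally notes an alternative route via the Merkurjev--Suslin description of torsion in $K_3(F)$ combined with Soul\'e's surjectivity of the Chern class map, but your argument via the collapse of the Atiyah--Hirzebruch spectral sequence is equivalent in content to what the paper sketches.
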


\begin{proof}[Sketch] 
This follows from the Quillen--Lichtenbaum conjecture,
as proven by Voevodsky and Rost (see~\cite{Weibel}, \cite{Voe}).
In this case, it can also be deduced  from the description of torsion
in $K_3(F)$ by Merkurjev and Suslin~\cite{Suslin} (described in 
terms of $w_2(F)$ above) combined with Borel's theorem for the rank 
(see also Theorem~6.5 of~\cite{TateK}), and the result
of Soul\'e  that the Chern class map is surjective.  
\end{proof}

\begin{lemma}
\label{lemma:sahinj2}
Suppose that~$p \nmid w_2(F)$ for every~\changed{prime $p$ with
  $p^2|n$}. Then the map
\begin{equation}
\label{eq.cnv1}
c_\z: K_3(F) \rightarrow K_3(F)/\nn K_3(F) \rightarrow
\left(F_\nn^{\times}/F_\nn^{\times \nn}\right)^{\chi^{-1}}
\end{equation}
is injective on~$K_3(F)/\nn K_3(F)$.
\end{lemma}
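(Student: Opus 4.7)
The plan is to write $c_\z$ as the composition
$$
K_3(F)/\nn K_3(F) \;\stackrel{\overline{c}}{\longrightarrow}\; H^1(F,\Z/\nn\Z(2))
\;\stackrel{\mathrm{res}}{\longrightarrow}\; H^1(F_\nn,\Z/\nn\Z(2))^G
\;\stackrel{\sim}{\longrightarrow}\; \bigl(F_\nn^{\times}/F_\nn^{\times \nn}\bigr)^{\chi^{-1}},
$$
where $\overline c$ is the mod-$\nn$ reduction of the Chern class, the middle arrow is the map~\eqref{winj} with $m=2$, and the last arrow is the canonical identification obtained from Hilbert~90 together with the trivialization $\Z/\nn\Z(1) = \mu$ determined by~$\z$. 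It suffices to prove that the first two arrows are injective. Since $w_2(F)$ is always even, the hypothesis $p \nmid w_2(F)$ forces $p>2$, and I will assume tacitly that $\nn$ is a power of~$p$, so that Theorem~\ref{theorem:surjective2} applies directly.

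For the middle (restriction) arrow, I would apply Lemma~\ref{lemma:sahinj} with $i=m=2$: since $\nn$ is prime to $w_2(F)$, the restriction map is injective, and the problem reduces to injectivity of~$\overline c$.

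For~$\overline c$, the main tool is Theorem~\ref{theorem:surjective2}, which yields a canonical isomorphism $K_3(F)\otimes\Z_p \stackrel{\sim}{\longrightarrow} H^1(\OL_F[1/p],\Z_p(2)) \cong H^1(F,\Z_p(2))$. Under the hypothesis $p\nmid w_2(F)$, both sides are free $\Z_p$-modules of rank $r_2(F)$ (the torsion subgroup of $K_3(F)\otimes\Z_p$ being the $p$-part of the cyclic group of order $w_2(F)$, which is now trivial). I would then pass to mod-$\nn$ coefficients via the long exact sequence attached to
$$
0 \map \Z_p(2) \stackrel{\nn}{\map} \Z_p(2) \map \Z/\nn\Z(2) \map 0,
$$
which gives an injection $H^1(F,\Z_p(2))/\nn \hookrightarrow H^1(F,\Z/\nn\Z(2))$. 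Composing with the isomorphism above produces the desired injection $\overline c$, completing the argument.

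The main obstacle is minor bookkeeping rather than any substantive new idea: one must verify that the change-of-coefficients step is legal, which is precisely what the torsion-freeness supplied by the hypothesis $p\nmid w_2(F)$ guarantees, and one must track the Tate-twist identification so that the $G$-invariants of $H^1(F_\nn,\Z/\nn\Z(2))$ land in the $\chi^{-1}$-eigenspace of $F_\nn^{\times}/F_\nn^{\times \nn}$ after untwisting by~$\z$ (the extra twist coming from $\Z/\nn\Z(2)\otimes\mu^{-1} = \mu$). Both points follow immediately from the results already cited in the excerpt.
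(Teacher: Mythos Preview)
Your proposal is correct and follows essentially the same route as the paper: factor $c_\z$ through $H^1(F,\Z_p(2))/\nn \hookrightarrow H^1(F,\Z/\nn\Z(2))$ (using Theorem~\ref{theorem:surjective2} and the long exact sequence) and then through the restriction map to~$F_\nn$, with the last isomorphism coming from Hilbert~90. The paper re-derives the injectivity of the restriction step inline via inflation--restriction, while you cite Lemma~\ref{lemma:sahinj} directly; both are the same argument. One small clarification: the injection $H^1(F,\Z_p(2))/\nn \hookrightarrow H^1(F,\Z/\nn\Z(2))$ is automatic from the long exact sequence and does not require the torsion-freeness of $K_3(F)\otimes\Z_p$ --- the hypothesis $p\nmid w_2(F)$ is only used for the restriction map (and to ensure $p>2$), so your aside about torsion-freeness, while true, is not where the hypothesis is actually spent.
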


\begin{proof}
By the Chinese Remainder Theorem,  it suffices to consider the
case~$\nn = p^{\ll}$.
In light of Theorem~\ref{theorem:surjective2}, it suffices to show that
the map
$$
H^1(F,\Z_p(2))/{\nn} \rightarrow H^1(F,\Z/{\nn}\Z(2)) 
\rightarrow F_{{\nn}}^{\times}/F_{{\nn}}^{\times {{\nn}}}
$$
is injective. The kernel of the first map is~$H^0(F,\Z_p(2))/{{\nn}} = 0$.
The kernel of the second map is, via inflation--restriction, the group
$H^1(\Gal(F_n/F),H^0(F,\Z/{\nn}\Z(2)))$. \changed{If~$\nn = p$,
  then~$\Gal(F_n/F)$ has order prime to~$p$ and the
  cohomology group vanishes. If~$p^2 | n$,} this group is certainly zero 
unless
$$
H^0(F,\Z/{\nn}\Z(2)) \subset H^0(F,\Q_p/\Z_p(2))
$$
is non-zero, or in other words, unless~$p$ divides~$w_2(F)$.
\end{proof}

\subsection{Upgrading from \texorpdfstring{$F_\nn^{\times}$}{Fntimes} 
to \texorpdfstring{$\OL_{F_\nn}[1/S]^{\times}$}{OFtimes}}

The following is a consequence of the finite generation of~$K_3(F)$:

\begin{lemma}
\label{lemma:avoidS}
For any field~$F$, there exists a finite set~$S$ of primes 
which avoids any given finite set of primes not dividing~$\nn$
such that
the image of~$c_\z$ on~$K_3(F)/\nn K_3(F)$ may be realized by an element 
of~$\OL^{\times}_{F(\z)}[1/S]$. 
\end{lemma}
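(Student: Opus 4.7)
The plan is to reduce the statement to the familiar Kummer sequence description of \'etale $H^1$ and then use Chebotarev's density theorem to kill the resulting class group obstruction. First, I would exploit finite generation: since $K_3(F)$ is finitely generated (Borel for the rank, Merkur'ev--Suslin for torsion, as recalled in Theorem~\ref{theorem:surjective2} and~\eqref{K3F}), the quotient $K_3(F)/\nn K_3(F)$ is finite, so it suffices to exhibit a single $S$ that works simultaneously for the finitely many image classes.

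Next, I would rewrite $c_\z$ cohomologically. Applying Theorem~\ref{theorem:surjective2} one prime at a time and assembling the results via the Chinese remainder theorem, the Chern class already factors through
$$c:\,K_3(F)\;\longrightarrow\; H^1_{\et}(\OL_F[1/\nn],\Z/\nn\Z(2)).$$
Combining this with inflation--restriction and the trivialization $\Z/\nn\Z(2)\cong\mu_\nn$ coming from the choice of~$\z$, the map $c_\z$ factors through $H^1_{\et}(\OL_{F_\nn}[1/\nn],\mu_\nn)^{\chi^{-1}}$. For any finite set $S$ of primes of $F_\nn$ containing those above~$\nn$, the Kummer sequence on $\mathrm{Spec}(\OL_{F_\nn}[1/S])$ reads
$$0\,\longrightarrow\, \OL_{F_\nn}[1/S]^{\times}/\OL_{F_\nn}[1/S]^{\times \nn}\,\longrightarrow\, H^1_{\et}(\OL_{F_\nn}[1/S],\mu_\nn)\,\longrightarrow\, \Pic(\OL_{F_\nn}[1/S])[\nn]\,\longrightarrow\, 0.$$
Thus the obstruction to realizing an \'etale cohomology class by an $S$-unit modulo $\nn$th powers lies in $\Pic(\OL_{F_\nn}[1/S])[\nn]$.

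Finally, I would kill this obstruction by enlarging $S$. Since $\Pic(\OL_{F_\nn})$ is finite and, by Chebotarev's density theorem, every ideal class contains infinitely many prime ideals, we may choose finitely many primes generating $\Pic(\OL_{F_\nn})$ that lie outside the prescribed avoidance set and outside the primes dividing~$\nn$. With this choice, $\Pic(\OL_{F_\nn}[1/S])$ is trivial, so the Kummer sequence collapses to an isomorphism, and every element of $H^1_{\et}(\OL_{F_\nn}[1/S],\mu_\nn)$ --- in particular every element in the image of~$c_\z$ --- lifts to an element of $\OL_{F_\nn}[1/S]^{\times}$ modulo $\nn$th powers.

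The main (mild) obstacle is this last step: the \'etale reformulation and the Kummer sequence are standard, but one has to verify carefully that the Chebotarev input really allows both killing $\Pic$ and avoiding the prescribed finite set of primes simultaneously. This is the only place in the argument where the disjointness condition in the statement is actually used.
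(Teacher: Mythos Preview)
Your proposal is correct and follows essentially the same route as the paper: factor $c_\z$ through $H^1_{\et}(\OL_{F_\nn}[1/S],\mu_\nn)$, identify the obstruction to lifting to $S$-units as the $\nn$-torsion in $\Pic(\OL_{F_\nn}[1/S])$ via the Kummer sequence, and then kill this Picard group by adjoining to~$S$ a set of primes generating the class group while avoiding the prescribed finite set. The paper phrases this last step as the ``well-known'' fact that class group generators can be chosen avoiding any finite set, which is exactly your Chebotarev argument made explicit.
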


\begin{proof} Note that  without the requirement that~$S$ avoids
any given finite set of primes not dividing~$\nn$, the result
is a trivial consequence of the fact that~$K_3(F)$ is finitely generated.
The construction of~$c$ as a map to units in~$F_\nn^{\times}$
proceeded via Hilbert~$90$. In light of Theorem~\ref{theorem:surjective2} 
above, it suffices to do the same with~$H^1(F_\nn,\mu)$ replaced 
by~$H^1_{\et}(\OL_{F_\nn}[1/S],\mu)$ for some set~$S$ containing~$p| \nn$.
However, in this case, the class group intervenes, as there is an exact 
sequence (\cite{Milne}, p.125):
$$
\OL_{F_\nn}[1/S]^{\times}/\OL_{F_\nn}[1/S]^{\times \nn}
\rightarrow H^1_{\et}(\OL_{F_\nn}[1/S],\mu)
\rightarrow \Pic(\OL_{F_\nn}[1/S])[\nn]
$$
where~$M[\nn]$ denotes the~$\nn$-torsion of~$M$ and~$\Pic$ is the Picard 
group, which may be identified with the class group of~$\OL_{F_\nn}[1/S]$.
On the other hand, it is well known that one can represent generators 
in the class group by a set of primes avoiding any given finite set 
of primes, and hence for a set~$S$ including primes
for each generator of the class group, the last term vanishes.
\end{proof}

\subsection{Upgrading from \texorpdfstring{$S$}{S}-units to units}
\label{sub.updrading2}

We give the following slight improvement on Lemma~\ref{lemma:avoidS}.

\begin{lemma} 
\label{lemma:avoidS2} 
Suppose that any prime divisor~$p$ of~$\nn$ is odd and
divides neither the discriminant of~$F$ nor the order of~$K_2(\OL_F)$.
Then the image of~$c_\z$ on~$K_3(F)/\nn K_3(F)$ may be realized
by an element of~$\OL^{\times}_\nn$.
\end{lemma}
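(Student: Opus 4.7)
The plan is to strengthen Lemma~\ref{lemma:avoidS} by controlling two successive obstructions: first, the obstruction to lifting the Chern class from $H^1_{\et}(\OL_F[1/p],\Z_p(2))$ to $H^1_{\et}(\OL_F,\Z_p(2))$ (i.e., removing $1/p$), and second, the Picard-group obstruction to descending from étale $H^1$ over $\OL_{F_\nn}$ to genuine units in $\OL_\nn^{\times}$. By the Chinese Remainder Theorem it is enough to treat the case $\nn=p^\ell$ with $p$ odd, $p\nmid\Delta_F$, and $p\nmid|K_2(\OL_F)|$.

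First I would remove the $1/p$ upstairs. By Theorem~\ref{theorem:surjective2}, $c$ identifies $K_3(F)\otimes\Z_p$ with $H^1_{\et}(\OL_F[1/p],\Z_p(2))$. The localization (Gysin) sequence for the open immersion $\mathrm{Spec}\,\OL_F[1/p]\hookrightarrow\mathrm{Spec}\,\OL_F$ with coefficients $\mu_\nn^{\otimes 2}$ has error terms of the form $H^j(\kappa(v),\mu_\nn)$ for $v\mid p$ and $j\le 1$, and these vanish because $\kappa(v)$ has characteristic $p$ (so there are no nontrivial $p^\ell$th roots of unity and $\kappa(v)^\times$ has order prime to $p$). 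Hence the natural maps $H^i_{\et}(\OL_F,\mu_\nn^{\otimes 2})\to H^i_{\et}(\OL_F[1/p],\mu_\nn^{\otimes 2})$ are isomorphisms for $i=1,2$, and the analogous statement holds over $\OL_{F_\nn}$.

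Next I would pass from $F$ to $F_\nn$ by inflation-restriction. Since $p$ is odd and $p\nmid\Delta_F$, the element $\z_p+\z_p^{-1}$ cannot lie in $F$ (its presence would force ramification at $p$), and so $p\nmid w_2(F)$; by Sah's lemma this forces $H^*(G,\mu_\nn^{\otimes 2})=0$. The Hochschild-Serre spectral sequence then gives an isomorphism
\[
H^1_{\et}(\OL_F,\mu_\nn^{\otimes 2}) \;\stackrel{\sim}{\longrightarrow}\; H^1_{\et}(\OL_{F_\nn},\mu_\nn^{\otimes 2})^G \;\cong\; H^1_{\et}(\OL_{F_\nn},\mu)^{\chi^{-1}},
\]
where the second identification trivializes one factor of $\mu$ using the root of unity~$\z$. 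Hence the image of $c_\z$ already lives naturally in étale $H^1$ over the full ring $\OL_{F_\nn}$.

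The final step is to kill the Picard-group obstruction. Kummer theory on $\mathrm{Spec}\,\OL_{F_\nn}$ yields the exact sequence
\[
0 \to (\OL_{F_\nn}^\times/\OL_{F_\nn}^{\times\nn})^{\chi^{-1}} \to H^1_{\et}(\OL_{F_\nn},\mu)^{\chi^{-1}} \to \Pic(\OL_{F_\nn})[\nn]^{\chi^{-1}},
\]
so it remains to show that the image of $c_\z$ in the right-most group vanishes. By the Quillen-Lichtenbaum theorem (Voevodsky-Rost) one has $K_2(\OL_F)\otimes\Z/\nn\Z \cong H^2_{\et}(\OL_F,\mu_\nn^{\otimes 2})$, which vanishes by hypothesis. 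Running Hochschild-Serre simultaneously on the twisted Kummer sequences over $\OL_F$ and $\OL_{F_\nn}$, one identifies the relevant $\chi^{-1}$-eigenspace of $\Pic(\OL_{F_\nn})[\nn]$ with a subquotient of $H^2_{\et}(\OL_F,\mu_\nn^{\otimes 2})=0$, forcing the obstruction to vanish on the image of $c_\z$. The main obstacle is the precise bookkeeping in this last step: the twist of the Galois action by $\chi$ must be tracked carefully through both Kummer sequences and through Hochschild-Serre in order to match the $\chi^{-1}$-eigenspace of $\Pic[\nn]$ with the piece of $H^2_{\et}$ over $\OL_F$ that is killed by the vanishing of $K_2(\OL_F)\otimes\Z/\nn\Z$.
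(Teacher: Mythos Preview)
Your overall plan---kill the Picard obstruction using the hypothesis on $K_2(\OL_F)$ and then descend to genuine units---is the right shape, but the execution has a genuine gap: you cannot work with \'etale cohomology of $\mu_{p^\ell}$ over $\OL_F$ (or $\OL_{F_\nn}$) in the way you propose, because $p$ is not invertible there.

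Concretely, three of your steps break down for the same reason. In your first step, the Gysin identification of the local terms in the localization sequence with $H^j(\kappa(v),\mu_\nn)$ requires purity, and absolute purity needs the torsion order of the coefficients to be invertible on the base. Here the local terms are governed not by the residue field but by the punctured henselian disk, i.e.\ by $H^j(F_v,\mu_\nn^{\otimes 2})$ for $v\mid p$, and these groups are typically nonzero. In your Hochschild--Serre step, $\mathrm{Spec}\,\OL_{F_\nn}\to\mathrm{Spec}\,\OL_F$ is \emph{not} an \'etale $G$-cover: it is (totally) ramified at the primes above~$p$, so the spectral sequence you invoke is unavailable over the full ring of integers. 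Finally, your Kummer sequence on $\OL_{F_\nn}$ is not exact as a sequence of \'etale sheaves when $p$ is not a unit, so the displayed short exact sequence with $\Pic(\OL_{F_\nn})[\nn]$ is not valid. (Your appeal to Quillen--Lichtenbaum likewise produces $H^2_{\et}(\OL_F[1/p],\mu_\nn^{\otimes 2})$, not $H^2_{\et}(\OL_F,\mu_\nn^{\otimes 2})$.)

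The paper avoids all of this by never leaving $\OL[1/p]$ at the cohomological level. It runs the Kummer sequence on $\OL_{F(\z)}[1/p]$, where everything is well-behaved, and kills the $\chi^{-1}$-part of $\Pic(\OL_{F(\z)}[1/p])[\nn]$ by quoting Keune's injection of $(\Pic(\OL_{F(\z)}[1/p])/p^{\ll})_{\chi^{-1}}$ into $K_2(\OL_F)/p^{\ll}$, which vanishes by hypothesis. This gives $p$-units. The passage from $p$-units to units is then done \emph{elementarily at the level of unit groups}, via the exact sequence
\[
\OL_{F(\z)}^{\times}/\OL_{F(\z)}^{\times \nn}\;\longrightarrow\;\OL_{F(\z)}[1/p]^{\times}/\OL_{F(\z)}[1/p]^{\times \nn}\;\longrightarrow\;\bigoplus_{v\mid p}\Z/\nn\Z,
\]
together with the observation (this is where $p\nmid\Delta_F$ is really used) that $p$ is totally ramified in $F(\z)/F$, so $G$ acts trivially on the right-hand term and its $\chi^{-1}$-eigenspace is zero. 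You should reorganize your argument in this order: first Picard over $\OL[1/p]$, then the valuation map to strip the~$1/p$.
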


\begin{proof}
By Lemma~\ref{lem.mab}, it suffices to consider the case when~$\nn$ 
is a power of~$p$. Let~$\z = \z_\nn$.
The fact that~$p$ is prime to the discriminant of~$F$ 
implies that~$F(\z)/F$ is totally ramified at~$p$.
The image of~$c_{\z}$ factors 
through~$H^1_{\et}(\OL[1/p],\Z/\nn\Z(2))$, and, via inflation--restriction,
through~$H^1_{\et}(\OL_{F(\z)}[1/p],\Z/\nn\Z(1))$.
 The Kummer sequence for \'etale cohomology gives 
a short exact sequence:
$$
\OL_{F(\z)}[1/p]^{\times}/\OL_{F(\z)}[1/p]^{\times \nn}
\rightarrow   H^1_{\et}(\OL_{F(\z)}[1/p],\Z/\nn(1))
\rightarrow \Pic(\OL_F(\z)[1/p])[\nn]\,.
$$
The image of~$c_{\z}$ lands in the~$\chi^{-1}$-invariant part of the 
second group.
The~$\chi^{-1}$-invariant part~$M^{\chi^{-1}}$ of a~$G$-module~$M$
is non-zero if and only if the largest~$\chi^{-1}$-invariant 
quotient~$M_{\chi^{-1}}$ is non-zero. However,
by results of Keune~\cite{Keune}, there is an injection
$$
\left(\Pic(\OL_F(\z)[1/p])/p^{\ll} \right)_{\chi^{-1}} 
\rightarrow K_2(\OL_F)/p^{\ll} \,.
$$
In particular, the pushforward of the  image of~$c_{\z}$ to the Picard group
is trivial whenever~$K_2(\OL_F) \otimes \Z_p$ is trivial.
Since we are assuming that~$p$ does not divide the order of~$K_2(\OL_F)$,
we deduce that the image of~$c_{\z}$ is realized by~$p$-units.
We now upgrade this to actual units.
There is an exact sequence:
$$
(\OL_{F(\z)})^{\times}/ (\OL_{F(\z)})^{\times \nn} \rightarrow 
(\OL_{F(\z)}[1/p])^{\times}/(\OL_{F(\z)}[1/p])^{\times \nn} 
\rightarrow \bigoplus_{v|p} \Z/\nn \Z\,,
$$
where the last map is the valuation map. Since $p$ is totally ramified in 
$F(\z)/F$,  the action of $G$ on the final term is trivial. 
By assumption, the quotient~$\Gal(F(\z_p)/F)$ is non-trivial, and 
hence the~$\chi^{-1}$-invariants of the final term are zero.
Hence, after taking 
$\chi^{-1}$-invariants, we see that the image of~$c_\z$ comes from a unit.
\end{proof}

\subsection{Proof of Theorem~\ref{thm.c}}
\label{sub.thm.c}

We have all the ingredients to give a proof of Theorem~\ref{thm.c}.
Fix an odd natural number $\nn$ and a primitive $\nn$th root of unity $\z$.
Consider the Chern class map
$$
c_\z: K_3(F)/\nn K_3(F) \to %\rightarrow
\left(F_\nn^{\times}/F_\nn^{\times \nn}\right)^{\chi^{-1}} \,.
$$
from~\eqref{eq.cnv1}. When $\nn$ is \changed{either square-free or
if any prime~$p^2 | n$ is}
coprime to $w_2(F)$, the above map
is injective by Lemma~\ref{lemma:sahinj2}. When $\nn$ is 
\changed{furthermore} coprime to the
discriminant $\Delta_F$ of $F$ and the order of $K_2(\calO_F)$, 
Lemma~\ref{lemma:avoidS2}
implies that the above map factors through a map
$$
c_\z: K_3(F)/\nn K_3(F) \to %\rightarrow
\left(\calO_\nn^{\times}/\calO_\nn^{\times \nn}\right)^{\chi^{-1}},
$$
where $\calO_\nn$ is the ring of integers of $F_\nn$. 
\frankchanged{If~$p^2 | \nn$ and~$p | w_2(F)$ is odd, then either~$p | \Delta_F$ or~$p = 3$,
hence the assumptions~$(n, w_2(F) \Delta_F |K_2(\OL_F)|) = 1$ and~$n$ odd are equivalent 
to~$(n,6 \Delta_F |K_2(\OL_F)|) = 1$, and if~$n$ is furthermore
not divisible by~$9$, they are equivalent to~$(n, 2 \Delta_F |K_2(\OL_F)|) = 1$,
justifying the claims made concerning~$M_F$ in Remark~\ref{quotableremark}.}
When $\nn$ is 
\changed{prime (or more generally square-free)}, 
%and coprime to $w_2(F)$,
 then~\eqref{K3F} and 
Proposition~\ref{prop.calOm}\ref{partb.calOm} imply that both sides of the above equation
are abelian groups isomorphic to $(\Z/\nn\Z)^{r_2(F)}$. It follows that
when $n$ is square-free and coprime to \changed{$2 \Delta_F |K_2(\calO_F)|$},
then the above map is an injection of finite abelian groups of the same
order, and hence an isomorphism. This concludes the proof of 
Theorem~\ref{thm.c}.
\qed

%%%%%%%%%%%%%%%%%%%%%%%%%%%%%%%%%%%%%%%%%%%%%%%%%%%%%%%%%%%%%%%%%%%%%%%%%%%%
%%%%%%%%%%%%%%%%%%%%%%%%%%%%%%%%%%%%%%%%%%%%%%%%%%%%%%%%%%%%%%%%%%%%%%%%%%%%

\section{Reduction to finite fields}
\label{sec.ff}

As we will see in Section~\ref{sec.cRcompare},
the comparison of the maps $c_{\z}$ and $R_{\z}$ and the proof
of Theorem~\ref{thm.cR} require a reduction of both maps to the case of
finite fields. In this section, we review the local Chern classes and the 
Bloch groups of finite fields, and introduce local (finite field) versions
of the maps $c_\z$ and $R_\z$. We will be considering the case that 
$\nn$ is a prime power~$p^\ll$, and will denote by $\z$ a primitive 
$\nn$th root of unity. 

\subsection{Local Chern class maps}
\label{sub.localchern}

Let~$\q$ be a prime of norm $q \equiv -1 \mod \nn$ in~$\OL_F$.
\changed{We work with these primes for several (related) reasons.
The first is that the groups~$K_3(\F_q)$ we consider below have order~$q^2 - 1$,
and so to see interesting classes in~$K_3(F)/n K_3(F)$ we require that~$q^2 -1$
be divisible by~$\nn$, and this necessitates choosing~$q \equiv \pm 1 \bmod \nn$.
On the other hand, if~$q \equiv 1 \bmod \nn$, then~$\F_q$ contains
the~$\nn$th roots of unity, which we generally need to avoid in our
construction. The reason to avoid~$\nn$th roots of unity manifests itself
quite concretely in this setting: the Bloch group~$B(\F_q)$ itself turns
out to have order (more or less)~$q+1$ rather than~$q^2 - 1$, so it won't
see any interesting~$\nn$-torsion classes unless~$q \equiv -1 \mod \nn$.}
The residue field of~$\OL_F$ at~$\q$ is~$\F_q$, and the residue field
of~$\OL_{F(\z)}$ at a prime~$\QQ$ above~$\q$ is~$\F_{q^2} = \F_{q}(\z)$.
Following Lemma~\ref{lemma:avoidS}, suppose that~$S$ does not contain
any primes dividing~$q$.

\begin{lemma}
There exists a commutative diagram of Chern class maps as follows:
$$
\begin{diagram}
K_3(F)/\nn K_3(F) & 
\rTo^{c_\z} & 
(\OL[1/S]^{\times}_{F(\z)}/\OL[1/S]^{\times \nn}_{F(\z)})^{\chi^{-1}} \\
\dTo & & \dTo \\
K_3(\F_q)/\nn K_3(\F_q) & \rEquals^{c_{\z,\q}} & 
\F^{\times}_{q^2}/\F^{\times \nn}_{q^2}.
\end{diagram}
$$
\end{lemma}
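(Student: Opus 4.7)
The plan is to deduce the diagram from naturality of Soulé's Chern classes under the flat morphism $\Spec(\F_q) \to \Spec(\OL_F[1/S])$ coming from reduction modulo~$\q$ (well-defined because $\q\notin S$), together with Quillen's computation of $K_3$ of a finite field.

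First I would construct the left vertical arrow. The hypothesis $\q\notin S$ gives a ring homomorphism $\OL_F[1/S]\to \F_q$, and functoriality of algebraic $K$-theory produces $K_3(\OL_F[1/S])\to K_3(\F_q)$. Combined with the identifications $K_3(F)\otimes\Z_p\cong K_3(\OL_F)\otimes\Z_p\cong K_3(\OL_F[1/S])\otimes\Z_p$ recalled in Section~\ref{section:relations}, this descends to a map on $K_3/\nn K_3$. Next I would analyze the bottom row: by Quillen's theorem $K_3(\F_q)\cong\Z/(q^2-1)\Z$, and the congruence $q\equiv -1\pmod\nn$ gives $\nn\mid q^2-1$, so $K_3(\F_q)/\nn K_3(\F_q)\cong\Z/\nn\Z$. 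The local Chern class map $c_{\z,\q}$ is defined exactly as $c_\z$ was in Section~\ref{sub.definitions.chern}, namely via the Atiyah--Hirzebruch spectral sequence for \'etale $K$-theory of $\Spec\F_q$, composed with the Kummer isomorphism $H^1(\F_q,\mu_\nn)\cong \F_{q^2}^\times/\F_{q^2}^{\times\nn}$ and the untwisting $\mu_\nn^{\otimes 2}\cong\mu_\nn$ provided by~$\z$. The fact that $c_{\z,\q}$ is an isomorphism is classical: it recovers Quillen's identification of $K_{2m-1}(\F_q)$ with the units in the degree-$m$ residue extension, upon reducing modulo~$\nn$.

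Then I would construct the right vertical arrow. Pick a prime $\QQ\mid\q$ of $\OL_{F(\z)}$; since $\QQ\notin S$, reduction gives a surjection $\OL_{F(\z)}[1/S]^\times\to (\OL_{F(\z)}/\QQ)^\times=\F_{q^2}^\times$. This map is equivariant for the decomposition subgroup at~$\QQ$ in $G=\Gal(F_\nn/F)$, which because $q\equiv -1\pmod\nn$ and $\nn$ is a prime power surjects onto~$G$; hence the $\chi^{-1}$-eigenspace on the left maps into the corresponding eigenspace on the right (which in this local situation is the full group $\F_{q^2}^\times/\F_{q^2}^{\times\nn}$, since Frobenius acts on $\mu_\nn\subset\F_{q^2}$ by $\chi^{-1}$). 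Commutativity of the square then reduces to functoriality of the Atiyah--Hirzebruch spectral sequence under the map of schemes, compatibly with the Kummer identifications and with the common choice of~$\z$; this is built into Soulé's construction in~\cite{Soule2}.

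The main obstacle is the careful bookkeeping in the last step: one must check that the global Kummer identification $H^1(F_\nn,\mu)=F_\nn^\times/F_\nn^{\times\nn}$ and its local counterpart $H^1(\F_{q^2},\mu)=\F_{q^2}^\times/\F_{q^2}^{\times\nn}$ are matched by the \emph{same} choice of primitive root~$\z$ (rather than $\z$ and its image under a Galois conjugate depending on the choice of~$\QQ$), and that the twist by $\chi^{-1}$ used to define $c_\z$ globally agrees with the twist used locally. Once one verifies that the decomposition group at $\QQ$ is all of~$G$, so that the choice of~$\QQ$ above~$\q$ only affects everything by an element of~$G$ acting compatibly on both sides, this compatibility falls out, and the diagram commutes.
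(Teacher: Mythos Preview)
Your approach is essentially the paper's: both arguments reduce the commutativity to the naturality of Soul\'e's Chern class maps under the reduction $\OL_F[1/S]\to\F_q$, together with the unwinding of Hilbert~90 on the cohomology side. The paper is slightly more explicit in that it factors the left column through the completion $\OL_{F,\q}$ and invokes Gabber rigidity to identify $K_3(\OL_{F,\q};\Z_p)$ with $K_3(\F_q)\otimes\Z_p$, and correspondingly factors the right column through the unramified local cohomology $H^1_{\ur}(\OL_{F,\q},\Z/\nn\Z(2))$; your direct passage from the global $S$-integers to the residue field bypasses this intermediate step, which is fine.

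Two small corrections. First, the morphism $\Spec(\F_q)\to\Spec(\OL_F[1/S])$ is a closed immersion, not flat; fortunately flatness plays no role in the functoriality you need. Second, and more substantively, your claim that the decomposition group $D_\QQ$ ``surjects onto~$G$'' is false in general: since $\q$ is unramified in $F(\z)/F$ and the residue extension is $\F_{q^2}/\F_q$, the decomposition group is cyclic of order~$2$, whereas $G\hookrightarrow(\Z/\nn\Z)^\times$ can be much larger. This does not break your argument, because the diagram only asserts commutativity for a \emph{fixed} choice of~$\QQ$ (as in the paper), and the only place you actually use the decomposition group is the correct observation that Frobenius acts on $\F_{q^2}^\times/\F_{q^2}^{\times\nn}$ by $\chi^{-1}$. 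You should simply drop the surjectivity claim and the sentence about independence of~$\QQ$; the compatibility of the Kummer identifications under the common choice of~$\z$ then follows directly from functoriality, as you say.
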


\begin{proof} By the Chinese Reminder Theorem, we may reduce to
the case when~$\nn = p^{\ll}$.
There 
is an isomorphism~$K_3(F) \otimes \Z_p \simeq K_3(\OL_F) \otimes \Z_p$
(see Theorem~\ref{theorem:surjective2}).
Let~$\OL_{F,\q}$ be the completion 
of~$\OL_{F}$ at~$\q$.  We have a more general diagram as follows:
$$
\begin{diagram}
K_3(\OL_F)/\nn K_3(\OL_F) & 
\rTo^{c_\z} & H^1_{\et}(\OL_F[1/p],\Z/\nn \Z(2)) & \rTo & 
(\OL[1/S]^{\times}_{F(\z)}/
\OL[1/S]^{\times \nn}_{F(\z)})^{\chi^{-1}} \\
\dTo & & \dTo & &  \dTo \\
K_3(\OL_{F,\q};\Z_p)/\nn K_3(\OL_{F,\q};\Z_p)  & \rEquals^{c_{\z,\q}} 
&  H^1_{\ur}(\OL_{F,\q},\Z/\nn \Z(2)) & \rEquals & 
\F^{\times}_{q^2}/\F^{\times \nn}_{q^2} \\
\uEquals & & \uEquals & & \uEquals \\
K_3(\F_q)/\nn K_3(\F_q) & \rEquals^{c_{\z,\q}} &  H^1(\F_q,\Z/\nn \Z(2)) 
& \rEquals & \F^{\times}_{q^2}/\F^{\times \nn}_{q^2}.
\end{diagram}
$$
The image of~$H^{1}_{\et}(\OL_F[1/p])$ in the cohomology of~$\OL_{F,\q}$ 
for~$\q$ prime to~$p$ lands in the subgroup~$H^{1}_{\ur}$ of unramified 
classes. This subgroup is precisely the image of~$H^1(\OL/\q,\Z/\nn \Z(2))$ 
under inflation.  The \changed{first horizontal map on the right hand side was constructed previously,
and the other two horizontal maps on the right hand side are constructed in the same way.
The reason that the~$\chi^{-1}$-invariants do not appear on the factors~$ \F^{\times}_{q^2}/\F^{\times \nn}_{q^2}$ is that the Galois group already acts by~$\chi^{-1}$. More precisely, on the one hand, $\Gal(\F_{q^2}/\F_{q})$
is generated by Frobenius which acts as multiplication by~$q \equiv -1 \mod \nn$, and on the other 
hand, $\Gal(\F_{q^2}/\F_q) = \Gal(\F_q(\zeta_p)/\F_q)$, and the character~$\chi^{-1}$ is precisely
the non-trivial character of this group.}
The identification of the two lower horizontal lines is a reflection of 
Gabber rigidity, which implies 
that~$K_3(\OL_{F,\q};\Z_p) \simeq K_3(\F_{\q}) \otimes \Z_p$. 
\end{proof}

\begin{proposition}
\label{prop:cheb}
Let~$\wF$ denote the Galois closure of~$F$, and suppose 
that~$\z \notin \wF(\z + \z^{-1})$.  (Equivalently, suppose that~$\nn$
is prime to~$\ww$ of Equation~\ref{wtwiddle}.)
\begin{enumerate}[label=(\alph*)]
\item  \label{parta.cheb} There is a map:
$$
\begin{diagram}
K_3(F)/\nn K_3(F) & \rTo^{\bigoplus c_{\z,\q}} & 
\bigoplus 
\F^{\times}_{q^2}/\F^{\times \nn}_{q^2}
\end{diagram}
$$
where the sum ranges over all primes~$\q$ of prime 
norm~$q \equiv -1 \mod \nn$ which 
split completely in~$F$, or alternatively runs over all but finitely 
many primes~$q \equiv -1 \mod \nn$ which split completely in~$F$.
\item   \label{partb.cheb} 
The image of this map is isomorphic to the image of the global map~$c_\z$, 
which is injective if~$(n,w_2(F)) = 1$.
\item  \label{partc.cheb} 
For~$\xi \in K_3(F)$, the set
$$
\left\{ \q \subset \OL_F[1/S] \ \left| \ c_{\z,\q}(\xi) = 0 \right. \right\}
$$
(for any finite~$S$) determines the image of~$\xi$ up to a scalar. 
\end{enumerate}
\end{proposition}

\begin{proof} 
It suffices to consider the case when~$\nn = p^{\ll}$.
Let~$\xi \in K_3(F)$, and let the class of~$c_{\z}(\xi)$ be
represented by an~$S$-unit~$\eps$. Because 
of the Galois action, this gives rise via Kummer theory to 
a~$\Z/\nn\Z$-extension $H$ of~$F(\z + \z^{-1})$, 
and such that the reduction mod~$\q$ of~$\eps$ determines the 
element~$\Frob_{\q} \in \Gal(H/F(\z + \z^{-1}))$. (Explicitly, 
we have~$H(\z)=F(\z,\eps^{1/\nn})$.) Hence our assumptions imply that 
any prime~$q$ which splits completely in~$F(\z + \z^{-1})$ 
(which forces~$q \equiv \pm 1 \mod \nn$) and is additionally congruent 
to~$-1 \mod p$ must split in~$H$.
Let~$\wH$ denote the Galois closure of~$H$ over~$\Q$, and~$\wF$ the 
Galois closure of~$F$ over~$\Q$. 
Note that the Galois closure of~$F(\z + \z^{-1})$ 
is~$\wF(\z + \z^{-1})$. A prime~$q$ splits 
completely in~$H$ if and only if it splits completely in~$\wH$, and 
splits completely in~$F(\z+ \z\i)$ 
if and only if it splits completely in~$\wF(\z + \z^{-1})$. We have 
a diagram of fields as follows:
$$
\begin{diagram}
\wH & \rLine & \wH(\z) \\
\dLine & & \dLine \\
\wF(\z + \z^{-1}) & \rLine & \wF(\z) \end{diagram}
$$
By assumption, we have~$\z \notin \wF(\z + \z^{-1})$.
Since~$H/F(\z + \z^{-1})$ is cyclic of degree~$\nn$, it follows 
that~$\Gal(\wH/\wF(\z + \z^{-1}))$
is an abelian~$p$-group. On the other hand, 
$\Gal(\wF(\z)/\wF(\z + \z^{-1})) = \Z/2\Z$. 
so $\Gal(\wH(\z)/\wF(\z + \z^{-1}))$ is the direct sum 
of~$\Z/2\Z$ with a~$p$-group,
Let $\sigma \in \Gal(\wH(\z)/\wF(\z + \z^{-1})) \subset \Gal(\wH/\Q)$ 
denote an element of order~$2p$.
By the Chebotarev density theorem, there exist infinitely many 
primes~$q \in \Q$ with Frobenius element 
in~$\Gal(\wH/\Q)$ corresponding to~$\sigma$. By construction, the prime~$q$ 
splits completely in~$\wF(\z+\z\i)$ 
because the corresponding Frobenius element is trivial 
in~$\Gal(\wF(\z + \z^{-1})/\Q)$. On the other hand, 
since~$\sigma$ has order divisible by~$2$ and by~$p$, it is non-trivial in 
both~$\Gal(\wF(\z)/\wF(\z+\z\i)) = \Gal(\Q(\z)/\Q(\z + \z^{-1}))$ 
and~$\Gal(\wH/\wF(\z + \z^{-1}))$.  The first condition implies 
that~$q \equiv -1 \mod \nn$, and 
the second condition implies that~$q$ does not split completely in~$H$, a 
contradiction. The injectivity
(under the stated hypothesis) follows from Lemma~\ref{lemma:sahinj}.
\end{proof}
 
\begin{remark} 
The condition that~$\z \notin \wF(\z + \z^{-1})$ is automatic if~$p$ 
is unramified in~$F$, because then the ramification degree of~$\Q(\z)$ 
is~$p-1$ whereas the ramification degree of~$\wF(\z + \z^{-1})$ 
is~$(p-1)/2$ for~$p$ odd. If~$\z \in \wF(\z + \z^{-1})$, then 
there are no primes~$q$ which split completely in~$F$ and have 
norm~$-1 \mod \nn$.  In particular, when~$\z \in \wF(\z + \z^{-1})$, 
we have~$B(\F_q)\otimes\F_p=0$ for every prime~$q$ which splits completely 
in~$F$.
\end{remark}
 
\subsection{The Bloch group of \texorpdfstring{$\F_q$}{Fq}}
\label{section:hh}

In order to make our maps explicit, we must relate the Chern class map to
the Bloch group. Let  $p > 2$ and $q > 2$ be odd primes such that 
$q \equiv -1 \mod \nn$, where $\nn=p^\ll$. For a finite field $\F_q$, the 
group $\F^{\times}_q$ is cyclic, so $\bigwedge^2 \F^{\times}_q$ is a 
2-torsion group. 
Hence the Bloch group $B(\F_q)$ coincides with the pre-Bloch group after 
tensoring 
with $\F_p$, where the pre-Bloch group is defined as the quotient of the 
free abelian 
group \changed{on~$\mathbf{P}^1(\F_q)$} by the $5$-term relation \changed{(\ref{5term})}.
By~\cite{Hutch}, the Bloch group~$B(\F_q)$ is a cyclic group of order 
$q+1$ up to $2$-torsion. Moreover, following~\cite{Hutch}, one may 
relate~$B(\F_q)$ to the cohomology of $\SL_2(\F_q)$ in degree three, 
as we now discuss.

There is an isomorphism
$$
H_3(\SL_2(\F_q),\Z) \otimes \Z/\nn\Z \; \simeq\; \Z/\nn\Z.
$$
Let us describe this isomorphism more carefully. By a computation of 
Quillen, we know that $H_3(\SL_2(\F_q),\Z)$ is cyclic of order $q^2 - 1$. 
It follows that the $p$-part of this group comes from the 
$p$-Sylow subgroup. If one chooses an isomorphism
$$
\F_{q^2} \simeq (\F_q)^2
$$
of abelian groups, one gets a well defined map:
$$
\F^{\times}_{q^2} \= C \= \Aut_{\F_q}(\F_{q^2}) \rightarrow \GL_2(\F_q)
$$
which is well defined up to conjugation.
There is, correspondingly, a map $C^1 \rightarrow \SL_2(\F_q)$, where
$$
C^1 \= \text{Ker}\bigl(N: \F^{\times}_{q^2} \rightarrow \F^{\times}_q \bigr)\,.
$$
We refer to both $C$ and $C^1$ as the non-split Cartan subgroup. 
By Quillen's computation, we deduce that there is a canonical map:
$$
C^1 \= H_3(C^1,\Z) \rightarrow H_3(\SL_2(\F_q),\Z)
$$
which is an isomorphism after tensoring with $\Z/\nn \Z$.
There is a canonical isomorphism $C^1[\nn] = \mu$, where~$\mu$ denotes 
the~$\nn$th roots of unity. Hence to give an element of order $p$
in $H_3(\SL_2(\F_q),\Z)$ up to conjugation is equivalent to giving a primitive
$\nn$th root of unity $\z  \in C^1 \subset C = \F^{\times}_{q^2}$. 
From~\cite{Hutch}, there is a canonical map:
$$
H_3(\SL_2(\F_q),\Z) \rightarrow B(\F_q)\,,
$$
at least away from $2$-power torsion,
which is an isomorphism after tensoring with~$\Z/\nn \Z$. Given a root of 
unity $\z$, let $t$ denote the corresponding element of $\SL_2(\F_q)$. 
The corresponding element of $B(\F_q)$ is given 
(see~\cite{Hutch}) by:
$$
\sum_{k \mod \nn} \left[ \frac{t(\infty) - t^{k+1}(\infty)}{
t(\infty) - t^{k+2}(\infty)} \right]\,.
$$
This construction yields the same element for $\z$ and $\z^{-1}$.
We may represent $t$ by its conjugacy class in $\GL_2(\F_q)$, which has
determinant one and trace $\z + \z^{-1} \in \F_q$. The choice of 
$\z$ up
to (multiplicative) sign is given by this trace. Note that the congruence 
condition on $q$ ensures
that the Chebyshev polynomial with roots $\z + \z^{-1}$ has distinct 
roots which
split completely over $\F_q$. Explicitly, we may choose
$$ 
t \= 
\left(\begin{matrix} 0 & 1 \\ -1 & \z + \z^{-1} \end{matrix} \right) = 
A \left(\begin{matrix} \z^{-1} & 0  \\ 0 & \z \end{matrix} \right) A^{-1},
\qquad 
A  \= \left(\begin{matrix} \z & \z^{-1} \\ 1 & 1 \end{matrix} \right).
$$
Let~$F_k$ be the Chebyshev polynomials, so 
$\displaystyle{F_k(2 \cos \phi) = \frac{\sin k\phi}{\sin \phi}}$. 
Then
$$
t^k(\infty)  \= \frac{F_{k-1}(\z + \z^{-1})}{F_k(\z + \z^{-1})}\,,
$$
and an elementary computation then shows that the corresponding element in 
$B(\F_q) \otimes \Z_p$ is given by
\begin{equation}
\label{finitefield}
\sum_{\changed{k \mod n}} \left[1 -  \frac{1}{F_{k}(\z + \z^{-1})^2} \right]  
\changed{
=  \sum_{k \mod n} \left[  1 - \left(\frac{\z-\z^{-1}}{\z^{k}-\z^{-k}}\right)^2 \right],}
\end{equation}
%where~$\sim$ denotes equality in~$B(\F_q) \otimes \Z_p$ \changed{for~$p > 3$},
%since~$[x] = [1-1/x]$ up to~$3$-torsion, \changed{and~$[0]$ (which has
%been added to the sum on the right  with the inclusion of the term~$k = 0$) is at most~$3$-torsion. When~$p = 3$ and~$\nn > 3$ is a power of~$3$,
%modifying the element by a~$3$-torsion element will also not change the order.})

\subsection{The local Chern class map \texorpdfstring{$c_\z$}{cz}} 
\label{sub.reduction}

In this section, $q$ will denote a prime with~$q \equiv -1 \mod p^{\ll}$ 
which splits completely in~$F$. Let~$\q$ be a prime above~$q$.
There is a natural 
map~$B(F) \rightarrow B(\OL_F/\q) = B(\F_q)$.  
%The elements~$[0]$, $[1]$, 
%and~$[\infty]$ are trivial elements of~$B(F)$ and~$B(\F_q)$; 
The reduction 
map sends~$[x]$ to~$[\overline{x}]$ under the natural reduction 
map~$\BP^1(F) \rightarrow \BP^1(\F_q)$. 

\begin{lemma}  \label{lemma:commute}
Let~$p > 2$. There is a commutative diagram as follows:
$$
\begin{diagram}
B(F)/\nn B(F) & \rTo & \bigoplus B(\F_q) \otimes \Z/\nn \Z & &  \\
\dEquals & & \dLine_{\simeq} & &  \\
K_3(F)/\nn K_3(F) & \rTo & \bigoplus K_3(\F_q) \otimes \Z/\nn \Z & 
\rEquals^{\bigoplus c_{\z,\q}} &  
\bigoplus \F^{\times}_{q^2}/\F^{\times \nn}_{q^2}\,,
\end{diagram}
$$
where the product runs over all primes~$\q$ of norm~$q  \equiv -1 \mod \nn$ 
which split completely in~$F$, or alternatively all but finitely 
many such primes.
\end{lemma}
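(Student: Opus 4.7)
The plan is to break the diagram into its three structural pieces (the two verticals and the two horizontals) and verify each separately, deducing commutativity from the functoriality of all constructions involved.

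For the left vertical equality, I would invoke Suslin's theorem on $K_3^{\mathrm{ind}}$: there is a natural extension, valid up to $2$-torsion,
$$
0 \;\to\; \mu_F \;\to\; K_3^{\mathrm{ind}}(F) \;\to\; B(F) \;\to\; 0,
$$
and since $p > 2$ and the hypotheses of Theorem~\ref{thm.c} ensure that $n$ is prime to the relevant roots of unity, tensoring with $\Z/n\Z$ yields the canonical isomorphism $B(F)/nB(F) = K_3(F)/nK_3(F)$. For the right vertical, Quillen's computation $K_3(\F_q) \cong \Z/(q^2-1)\Z$ together with Hutchinson's description (Section~\ref{section:hh}) identifies $B(\F_q)$, up to $2$-torsion, with the order-$(q+1)$ quotient; since $q \equiv -1 \mod n$ forces $n \mid q+1$, the natural comparison induces an isomorphism $B(\F_q) \otimes \Z/n \;\simeq\; K_3(\F_q) \otimes \Z/n$.

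Next I would define the top horizontal map: for each prime $\q \mid q$, the ring map $\OL_{F,(\q)} \twoheadrightarrow \F_q$ sends $[X_i] \mapsto [\bar X_i]$, where an $X_i$ lying in $\q$, $1+\q$, or at infinity reduces to the trivial classes $[0], [1], [\infty]$ in $B(\F_q)$. The five-term relation is preserved (possibly degenerating) under ring homomorphisms, so this gives a well-defined reduction map $B(F)/nB(F) \to \bigoplus_\q B(\F_q) \otimes \Z/n\Z$ where the sum ranges over all but the finitely many primes dividing a denominator of some $X_i$. The bottom horizontal map is, after the right-hand identification $K_3(\F_q) \otimes \Z/n \simeq \F_{q^2}^\times / \F_{q^2}^{\times n}$ (which is the Chern class isomorphism between two cyclic groups of order $n$), precisely $\bigoplus c_{\z,\q}$ from the preceding lemma, factoring through $K_3(\OL_{F,\q};\Z_p) \cong K_3(\F_q) \otimes \Z_p$ by Gabber rigidity.

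The main obstacle is the commutativity of the left square, which reduces to naturality of the Suslin comparison $K_3^{\mathrm{ind}} \to B$ with respect to the reduction $\OL_{F,(\q)} \to \F_q$. Both the scissors-congruence definition of $B$ and the $H_3(\SL_2)$-based description of the image of $B(\F_q)$ in $K_3(\F_q)$ reviewed in Section~\ref{section:hh} are manifestly functorial in the underlying ring, so the naturality holds; the right square commutes by naturality of the Chern class. The one point demanding careful verification is that under the explicit Hutchinson formula
$$
[\bar x] \;\longmapsto\; \sum_{k=1}^{n-1}\left[\left(\frac{\z^k-\z^{-k}}{\z-\z^{-1}}\right)^2\right]
$$
of Section~\ref{section:hh}, the image of the reduction of a Bloch class $[x] \in B(F)$ matches the reduction of its image in $K_3(F)$ --- but this is guaranteed by the functorial characterization of the comparison map and so the commutativity propagates through the diagram.
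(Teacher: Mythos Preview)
Your approach is essentially the same as the paper's: invoke Suslin for the left vertical, Hutchinson (together with Quillen) for the right vertical, and deduce commutativity from the functoriality of both constructions in the underlying ring. The paper's own proof is three sentences long and says exactly this.

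One correction to your last paragraph: the Hutchinson formula
\[
\sum_{k=1}^{n-1}\left[\left(\frac{\z^k-\z^{-k}}{\z-\z^{-1}}\right)^2\right]
\]
from Section~\ref{section:hh} is not a map sending a general Bloch class $[\bar x]$ to something; it is the image in $B(\F_q)$ of a specific torsion generator coming from $\z \in C^1 \subset \SL_2(\F_q)$ via $H_3(\mu) \to H_3(\SL_2(\F_q)) \to B(\F_q)$. You do not need this explicit formula to prove commutativity of the square --- all that is required is that Suslin's comparison map $K_3^{\mathrm{ind}} \to B$ is natural for ring homomorphisms (which it is, since both sides and the map can be realized through $H_3(\GL)$), and you already say this correctly earlier in the paragraph. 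So the final sentence should simply be dropped or rephrased as a remark that the explicit generator of Section~\ref{section:hh} witnesses the right vertical isomorphism, not the commutativity.
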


\begin{proof} 
The isomorphism of the left vertical map is a theorem of 
Suslin~\cite[Theorem~5.2]{Suslintwo} \frankchanged{(taking into account Lemma~\ref{suslincomparison})}, and the isomorphism 
of the right vertical map 
follows from~\cite{Hutch}. 
The fact that the diagram commutes 
is a consequence of the fact that both constructions are compatible (and 
can be seen in group cohomology).
\end{proof}

Recall that an element $x$ of an abelian group $G$ is $p$-saturated if 
$x\not\in [p]G$, where~$[p]:G \rightarrow G$ is the multiplication by~$p$ map.
% Mon, 23 Feb 2015
% For an element x in an abelian group, x is p-saturated if x =/= p*y 
% for some y. (So it's about checking if generators of B(F) tensor Q 
% are generators of B(F)).

\begin{corollary} 
There is an algorithm to prove that a set of generators of~$B(F)$
is~$p$-saturated for~$p > 2$.
\end{corollary}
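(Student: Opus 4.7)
The plan is to turn $p$-saturation into a finite computation of $\F_p$-ranks of matrices arising from reductions modulo primes, exploiting the injective local Chern class map of Proposition~\ref{prop:cheb} together with the explicit description of $B(\F_q)\otimes\F_p$ from Section~\ref{section:hh}.

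First I would restrict attention to primes $p>2$ coprime to $w_2(F)$ and to $\ww$, so that Proposition~\ref{prop:cheb}(b) applies and, combined with Lemma~\ref{lemma:commute}, yields an injection
\begin{equation*}
B(F)/pB(F)\;\hookrightarrow\;\bigoplus_{\q}B(\F_q)\otimes\F_p\;\cong\;\bigoplus_{\q}\F_p\,,
\end{equation*}
where $\q$ runs over primes of $F$ of prime norm $q\equiv-1\pmod p$ splitting completely in $F$. The finitely many excluded primes (in particular those dividing $6w_2(F)\Delta_F|K_2(\OL_F)|$) can be treated separately by working directly with the global map $R_\z$ and the prime-power reduction of Lemma~\ref{lem.mab}; each individual bad prime requires only a finite computation in $F_{p}^\times/F_{p}^{\times p}$, so it suffices to focus on the generic case.

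Given candidate generators $\xi_1,\dots,\xi_k\in B(F)$ with $\xi_i=\sum_j n_{i,j}[X_{i,j}]$, the local image $v_\q(\xi_i)\in B(\F_q)\otimes\F_p\cong\F_p$ is explicitly computable: reduce each $X_{i,j}$ modulo $\q$, substitute into the formula of Section~\ref{section:hh} expressing an element of $B(\F_q)$ in terms of the chosen generator attached to $\z\in\F_{q^2}^\times$, take the discrete logarithm in the cyclic group $\F_{q^2}^\times/\F_{q^2}^{\times p}$, and weight by the multiplicities $n_{i,j}$. The algorithm enumerates such primes $\q$ in order of norm, accumulates the matrix $M_N=(v_\q(\xi_i))\in M_{N\times k}(\F_p)$ over the first $N$ primes, and computes $\mathrm{rank}\,M_N$ at each step.

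By the injectivity above, $\mathrm{rank}\,M_N$ stabilizes at the $\F_p$-dimension of the image of $\langle\xi_1,\dots,\xi_k\rangle$ in $B(F)/pB(F)\cong\F_p^{r_2(F)}$, and by Chebotarev density applied to the splitting field of any potential nontrivial kernel element (as in the proof of Proposition~\ref{prop:cheb}(c)) this stabilization occurs at some finite $N$. The generators $\xi_1,\dots,\xi_k$ are $p$-saturated if and only if the stabilized rank equals $r_2(F)$; the algorithm halts with a positive proof at the first $N$ for which $\mathrm{rank}\,M_N=r_2(F)$. The main obstacle is the lack of an a priori effective bound on $N$, but since the corollary only requires proving $p$-saturation when it holds, termination in the positive case is guaranteed and no effective Chebotarev estimate is needed.
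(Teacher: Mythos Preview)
Your approach is essentially the paper's: use the injection of Lemma~\ref{lemma:commute} together with Proposition~\ref{prop:cheb} into $\bigoplus_\q B(\F_q)\otimes\F_p$, compute local images for successively many~$\q$, and halt once the rank matches the a priori known $\F_p$-dimension of $B(F)/pB(F)$.

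Two minor cautions. First, your description of the local computation is slightly garbled: the formula of Section~\ref{section:hh} produces the \emph{generator} of $B(\F_q)\otimes\Z_p$ from a choice of~$\z$, not a recipe for writing an arbitrary $[\overline X]$ in terms of that generator; what you actually need is either a direct computation in the finitely presented abelian group $B(\F_q)$ (Smith normal form of the $5$-term relations, which is what the paper has in mind when it says ``computing $B(\F_q)$ is clearly algorithmically possible''), or else the local map $c_{\z,\q}$ to $\F_{q^2}^\times/\F_{q^2}^{\times p}$ followed by a discrete logarithm there. Second, your handling of the finitely many bad primes via the global map $R_\z$ invokes the injectivity of~$R_\z$, which at this point in the paper has not yet been proved (it comes in Section~\ref{sec.cRcompare}); the paper's own proof simply does not address those primes, so this is a shared informality rather than a discrepancy.
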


\begin{proof} 
Computing~$B(\F_q)$ is clearly algorithmically possible. Moreover,
we can \emph{a priori} compute $B(F) \otimes \Z_p$ as an 
abstract~$\Z_p$-module. Hence it suffices to find sufficiently many 
distinct primes~$\q$ such that the image of
a given set of generators has the same order as~$B(F)/\nn B(F)$.
\end{proof}

In light of the commutative diagram
of Lemma~\ref{lemma:commute}, we also use~$c_\z$ to denote the Chern
class map on~$B(F)/\nn B(F)$.

\subsection{The local \texorpdfstring{$R_\z$}{Rz} map} 

Suppose that~$q \equiv -1 \mod p$.
It follows that the field~$\F_{q}$ does not contain~$\zeta_p$, and so
Proposition~\ref{prop.Pdef} applies to give maps~$P_{\z}$ and~$R_{\z}$
which are well defined over this field. In particular, 
since~$(p,q-1) = 1$, all elements of~$\F_{q}$
are~$p$-th powers, and hence the Kummer extension $H$ is given by
$H = F_n$ and~$R_{\z}$ and~$P_{\z}$ coincide.

%%%%%%%%%%%%%%%%%%%%%%%%%%%%%%%%%%%%%%%%%%%%%%%%%%%%%%%%%%%%%%%%%%%%%%%%%%%%
%%%%%%%%%%%%%%%%%%%%%%%%%%%%%%%%%%%%%%%%%%%%%%%%%%%%%%%%%%%%%%%%%%%%%%%%%%%%

\section{Comparison between the maps 
\texorpdfstring{$c_\z$}{cz} and  \texorpdfstring{$R_\z$}{Rz}}
\label{sec.cRcompare}

The main goal of this section, 
carried out in the first subsection, is to prove 
Theorem~\ref{thm.cR}. The main result here is Theorem~\ref{theorem:modpq}, 
which says that that our mod~$\nn$ local regulator map $R_{\z,q}$ gives an 
isomorphism from $B(\F_q)\otimes\Z/n\Z$ to $\F^{\times}_{q^2}\otimes\Z/n\Z$
for any prime power~$n$ and prime $q\equiv-1\!\pmod n$. This implies in 
particular the existence of a curious ``mod-$p$-$q$ dilogarithm map" from 
$\F_q$ to $\Z/n\Z$, and in Section~\ref{modpqdilog}, 
we digress briefly to give an 
explicit formula for this map.  In the final subsection,  we describe the
expected properties of the Chern class map that would imply the conjectural 
equality~\eqref{Compcz} and hence, in conjunction with~\eqref{CompRz}, 
the evaluation $\gamma=2$ of the comparison constant~$\gamma$ occurring 
in Theorem~\ref{thm.cR}.

\subsection{Proof of Theorem \ref{thm.cR}}
\label{pf.cR}

Throughout this section, we set $n=p^\ll$, and let $\z$ denote a primitive 
$\nn$th root of unity.
For a prime~$q \equiv -1 \mod\nn$ that splits completely in~$F$, and 
for a corresponding prime~$\q$ above~$q$, let $R_{\z,\q}$ denote the map 
$B(\OL_F/\q) = B(\F_q) \rightarrow \F^{\times}_{q^2}/\F^{\times \nn}_{q^2}$. 

We have two maps we wish to compare. One of them is
$$
c_\z:\, B(F)/\nn B(F)\;\rightarrow  \;
\bigl(F^{\times}_{\nn}/F^{\times \nn}_{\nn}\bigr)^{\chi^{-1}}\,.
$$
Because~$B(F)$ is  a finitely generated abelian group, we may represent 
the generators of the image by~$S$-units for some fixed~$S$ (at this point 
possibly depending on~$n$) and consider the map
$$
c_\z:\,  B(F)/\nn B(F) \rightarrow 
(\OL_{F(\z)}[1/S]^{\times}/\OL_{F(\z)}[1/S]^{\times \nn})^{\chi^{-1}}
\hookrightarrow  \bigoplus \F^{\times}_{q^2}/\F^{\times \nn}_{q^2} 
\simeq  \bigoplus B(\F_q),
$$
where the final sum is over all but finitely may primes~$\q$ of 
norm~$q \equiv -1 \mod \nn$ which split completely in~$F$. 
We have the diagram
$$
\begin{diagram}
B(F)/\nn B(F)  & \rTo^{R_\z}  &  
(\OL_{F(\z)}[1/S]^{\times}/\OL_{F(\z)}[1/S]^{\times \nn})^{\chi^{-1}} \\
\dTo & & \dTo \\
\bigoplus B(\F_q) \otimes \Z/\nn \Z & \rTo^{R_{\z,\q}} &  \bigoplus
\F^{\times}_{q^2}/\F^{\times \nn}_{q^2}\,.
\end{diagram}
$$
We have already shown, by Chebotarev (Proposition~\ref{prop:cheb}\ref{partb.cheb}), 
that~$c_\z(\xi)$ for~$\xi \in K_3(F)$ is determined up to scalar by
the set of primes for which~$c_{\z,\q}(\xi) = 0$.
Hence the result is a formal consequence of knowing that the 
maps~$R_{\z,\q}$ are isomorphisms for all~$\q$ of 
norm~$q \equiv -1 \mod \nn$. This is exactly Theorem~\ref{theorem:modpq} 
below. 
\qed

\medskip

By~\eqref{K3F}, the~$p$-torsion
subgroup of~$K_3(\Q(\z + \z^{-1}))$ is isomorphic to~$\Z/\nn \Z$.
On the other hand, since~$\Q(\z + \z^{-1})$ is totally real, we have
an isomorphism:
$$K_3(\Q(\z + \z^{-1})) \otimes \Z_p \simeq \Z/\nn \Z.$$

\begin{lemma}
\label{lemma:hh}
Let~$p > 2$ and \changed{$\nn=p^\ll$}. Suppose that~$q \equiv -1 \mod \nn$ 
and~$q \not \equiv -1 \mod p\nn$. The prime~$q$ splits completely 
in~$\Q(\z + \z^{-1})$. Let~$\F_{q}$ denote the residue field at 
one of the primes above~$q$. Then the map
$$
K_3(\Q(\z + \z^{-1})) \otimes \Z_p \rightarrow B(\F_q) \otimes \Z_p
$$
is an isomorphism.
\end{lemma}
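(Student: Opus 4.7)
The plan is as follows. First, to verify the splitting claim, note that since $q \equiv -1 \mod \nn$, the Frobenius at $q$ in $\Gal(\Q(\z_\nn)/\Q)\cong(\Z/\nn\Z)^\times$ equals $-1$, which coincides with complex conjugation. The fixed field of complex conjugation is precisely $\Q(\z+\z\i)$, so $q$ splits completely there; and any prime $\QQ$ of $\Q(\z_\nn)$ lying above a prime $\q\mid q$ of $\Q(\z+\z\i)$ has residue field $\F_{q^2}$.

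Next I would check that both sides are cyclic of order exactly $\nn$. For the left-hand side, formula~\eqref{K3F} applied to the totally real field $\Q(\z+\z\i)$ gives, since $r_2=0$, that $K_3(\Q(\z+\z\i))$ is pure torsion with $p$-primary part equal to the $p$-primary part of $w_2(\Q(\z+\z\i))$; definition~\eqref{eq.w2F} shows this part is exactly $\nn$, so $K_3(\Q(\z+\z\i))\otimes\Z_p\cong\Z/\nn\Z$. For the right-hand side, Section~\ref{section:hh} identifies $B(\F_q)\otimes\Z_p$ (for $p>2$) with the $p$-primary part of a cyclic group of order $q+1$, and the two hypotheses $q\equiv -1\mod\nn$, $q\not\equiv -1\mod p\nn$ together say $v_p(q+1)=\ll$, so this too is $\Z/\nn\Z$.

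Since both groups are cyclic of the same order $\nn$, the reduction map is an isomorphism as soon as it is surjective, so it suffices to exhibit a single element of $K_3(\Q(\z+\z\i))\otimes\Z_p$ whose image is a generator of $B(\F_q)\otimes\Z_p$. The natural candidate is the element
\[
\eta_\z \;:=\; \sum_{k=1}^{\nn-1}\biggl[\biggl(\frac{\z^k-\z^{-k}}{\z-\z\i}\biggr)^{\!2}\,\biggr]\,,
\]
which lies in $B(\Q(\z+\z\i))$ because each summand is invariant under $\z\leftrightarrow\z\i$. Its reduction modulo $\q$ has the same shape, with $\z$ now read as the image of $\z_\nn$ in $\F_{q^2}$. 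This image is still a primitive $\nn$th root of unity, for otherwise $\z_p - 1$ would reduce to $0$ modulo $\q$; but $\z_p - 1$ is a unit at every prime not above $p$ (since $\prod_{i=1}^{p-1}(\z_p^i - 1) = p$, a unit at $q\neq p$). Hence the reduction of $\eta_\z$ is exactly the Bloch-group element displayed at the end of Section~\ref{section:hh}.

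The step I expect to be the main obstacle is packaging Hutchinson's identifications correctly: one must recognize the reduction of $\eta_\z$ as the image, under the canonical isomorphism $H_3(\SL_2(\F_q),\Z)\otimes\Z/\nn\Z\cong B(\F_q)\otimes\Z/\nn\Z$, of the canonical generator of $H_3(\SL_2(\F_q),\Z)\otimes\Z_p\cong\Z/\nn\Z$ corresponding to a primitive $\nn$th root of unity in the non-split Cartan $C^1\subset\GL_2(\F_q)$. Once this identification from~\cite{Hutch} is in hand, the image of $\eta_\z$ generates $B(\F_q)\otimes\Z_p$, the reduction map is surjective, and a surjection between two groups isomorphic to $\Z/\nn\Z$ is automatically an isomorphism.
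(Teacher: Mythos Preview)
Your proposal is correct and follows essentially the same approach as the paper: both arguments hinge on the explicit element $\eta_\z$ and the identification from Section~\ref{section:hh} (via~\cite{Hutch}) of its reduction with the canonical generator of $B(\F_q)\otimes\Z_p$ coming from a primitive $\nn$th root of unity in the non-split Cartan. The paper's proof is terser---it cites~\cite{Zickert} to assert directly that $\eta_\z$ generates the source, whereas you compute the orders of both sides and argue surjectivity; one small caveat is that your justification ``each summand is invariant under $\z\leftrightarrow\z^{-1}$'' only shows the arguments lie in $\Q(\z+\z^{-1})$, not that $d(\eta_\z)=0$, so you still need the group-homology origin (or the Zickert reference) to place $\eta_\z$ in the Bloch group.
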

 
\begin{proof} \changed{If~$\nn$ is odd}, a generator of~$B(\Q(\z + \z^{-1}))[\nn] 
\simeq K_3(\Q(\z + \z^{-1})) \otimes \Z_p$ is given explicitly by 
the element 
\be
\label{eq.etaz}
\eta_\z  \ \changed{:=
 \sum_{k \mod n} \left[  1 - \left(\frac{\z-\z^{-1}}{\z^{k}-\z^{-k}}\right)^2 \right] 
 =
\sum_{k \mod n} \left([0] + 
 \left[ \left(
\frac{\z^{k} - \z^{-k}}{\z - \z^{-1}}  \right)^2 \right] \right)}
\ee
\changed{(The equivalence of these expressions follows
from the identites~$[0] + [1/X] = [0] - [X] = [1-X]$.) This can also be proved by combining the calculation given on p.~40 of~\cite{Zagier} with the
ones given at the end of Section~\ref{section:hh}, or following an argument similar 
to the proof  of~\cite[Theorem~1.4]{Zickert}. (See also~\cite[Proposition~5.4]{Frenkel}.)}
%or~\cite{Proposition Theorem 1.4 of [38].}
%  a=((x-1/x)/(z-1/z))^2; [b,c,d,] = [1/(1-a), (z-1/z)^2/((x+z)*(x-z)*(x*z+1)*(z*x-1)) , (z-1/z)^2/((x+z)*(x-z)*(x*z+1)*(z*x-1))]
% (z-1)^2*x^2/((x+z)*(x-z)*(x*z+1)*(x*z-1)/(x-z)^2)
% \changed{This can be proved by an argument similar to the one at the end of Section~\ref{section:hh}, and
% can also be deduced from Theorem 1.4 of [38].}
% We refer also to Section\ref{sub.ag}, where  consequence of the Andrews-Gordon identity.
% This follows from Theorem~1.4 of~\cite{Zickert}. 
On the other hand, the reduction modulo any prime above~$q$ generates the latter group, as follows 
from the discussion in Section~\ref{section:hh} (\changed{see in particular equation~(\ref{finitefield}).)}
\end{proof}
 
We now prove Theorem~\ref{theorem:modpq} as mentioned above: 

\begin{theorem}
\label{theorem:modpq}
Let  \changed{$\nn$ be an odd} prime power and $q \equiv -1 \mod \nn$. Then the map
$$
R_{\z,q}: B(\F_q)  \otimes \Z/\nn \Z \rightarrow 
\F^{\times}_{q^2} \otimes \Z/\nn \Z
$$
is an isomorphism, where~$\z$ is an~$\nn$th root of unity.
\end{theorem}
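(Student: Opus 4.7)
My plan is to show that both sides are cyclic of order $n=p^\ell$, reduce to the case $n=p$ by a compatibility argument, and then verify that $R_{\z,q}$ is non-zero on the specific generator $\eta_\z$ coming from Lemma~\ref{lemma:hh}.

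First, both source and target are cyclic of order $n$: by Hutchinson's theorem recalled in Section~\ref{section:hh}, $B(\F_q)\otimes\Z/n\Z\cong\Z/n\Z$ because $n\mid q+1$, and similarly $\F_{q^2}^\times\otimes\Z/n\Z\cong\Z/n\Z$ because $n\mid q^2-1$. A homomorphism between cyclic groups of order $p^\ell$ is an isomorphism if and only if its reduction mod $p$ is non-zero, and by Lemma~\ref{lemma:compatibility}(2) that reduction equals the corresponding map $R_{\z^{p^{\ell-1}},q}$ attached to the primitive $p$-th root of unity $\z^{p^{\ell-1}}$. This reduces me to the prime case $n=p$, which I treat below.

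In the case $n=p$, the hypothesis $q\equiv-1\pmod p$ gives $\gcd(p,q-1)=1$, so every element of $\F_q^\times$ is a $p$-th power; consequently the universal Kummer extension of $\F_q$ coincides with $\F_{q^2}$ and $P_\z=R_\z$ over $\F_q$. Choosing $x_m\in\F_q$ with $x_m^p=X_m:=\bigl((\z^m-\z^{-m})/(\z-\z^{-1})\bigr)^2$, I compute
\[
R_{\z,q}(\eta_\z)\,=\,\prod_{m=1}^{p-1}D_\z(x_m)
\,=\,\prod_{m,k=1}^{p-1}(1-\z^k x_m)^k\pmod{\F_{q^2}^{\times p}}.
\]
To evaluate this I would use the polynomial identity $1-X_m=-s_{m-1}s_{m+1}$ (with $s_k=(\z^k-\z^{-k})/(\z-\z^{-1})$) together with the cyclotomic factorization $\prod_{k=0}^{p-1}(1-\z^k x_m)=1-X_m$, rearranging the double product into a form involving only roots of unity times explicit $p$-th powers. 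The expected answer is the local analogue of $R_\z(\eta_\z)^4=\z$ (equation~\eqref{CompRz}), which here reduces to a purely algebraic identity in $\F_{q^2}$ analogous in spirit to the Kashaev-Mangazeev-Stroganov identities used in Section~\ref{sub.5term}.

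For the non-triviality, observe that $R_{\z,q}(\eta_\z)$ lies in the $\chi^{-1}$-eigenspace, which in $\F_{q^2}^\times/\F_{q^2}^{\times p}$ coincides with the image of the norm-one subgroup $\mu_{q+1}$. For $q\not\equiv-1\pmod{p^2}$, the primitive $p$-th root $\z$ is not itself a $p$-th power in $\F_{q^2}^\times$, so the identity $R_{\z,q}(\eta_\z)^4\equiv\z$ (since $4$ is invertible mod $p$) forces $R_{\z,q}(\eta_\z)\neq 0$, giving the isomorphism for such $q$; by Lemma~\ref{lemma:hh} the class $\eta_\z$ is itself a generator of $B(\F_q)\otimes\F_p$ in this regime. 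The residual case $q\equiv-1\pmod{p^2}$ can be handled by a finer analysis tracking the $p$-adic order of $R_{\z,q}(\eta_\z)$ inside $\mu_{q+1}/\mu_{q+1}^p$, or alternatively by replacing $\eta_\z$ with a different generator of $B(\F_q)\otimes\F_p$ (since $\F_q$ is the base, one can use any element of $B(\F_q)$ detected by the Chern class $c_{\z,q}$, which is known independently to be an isomorphism by local $K$-theory).

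The main obstacle is carrying out the rearrangement of the double product $\prod_{m,k}(1-\z^k x_m)^k$ into the form $\z\cdot(\mathrm{$p$-th power})$ up to a fourth power: this bookkeeping is delicate because one must carefully track how the $p$-th roots $x_m$ combine with the factors $\z^k$, and the cleanest route is to group terms by fixing $k$ and summing the contribution of $m$ using the identities $s_{-m}=-s_m$ and $s_m^2=1+s_{m-1}s_{m+1}$. Once this identity is in hand, the theorem follows at once from the discussion of the norm-one subgroup above.
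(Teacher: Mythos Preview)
Your proposal has two genuine gaps. First, the central step---evaluating $R_{\z,q}(\eta_\z)$ directly over $\F_q$ as (a fourth root of) $\z$---is only stated as an expectation (``I would use\dots the expected answer is\dots''), not carried out. In the paper this identity is \emph{not} obtained by elementary manipulation of the product $\prod_{m,k}(1-\z^kx_m)^k$; it is proved much later (Theorem~\ref{thm.eta}) via the asymptotics of the Andrews--Gordon Nahm sum, so invoking it here would at best be a forward reference to a nontrivial analytic result. Second, your treatment of the case $q\equiv-1\pmod{p^2}$ fails: invoking $c_{\z,q}$ is circular (Theorem~\ref{theorem:modpq} is precisely what is used in Section~\ref{pf.cR} to compare $R_\z$ with~$c_\z$), and the ``finer analysis'' cannot succeed because when $p^2\mid q+1$ the reduction $\eta_{\z_p,q}$ is $p$-divisible in $B(\F_q)\otimes\Z_p$ and hence \emph{trivial} in $B(\F_q)\otimes\F_p$. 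Worse, after your reduction to $n=p$, every $q$ coming from an original $n=p^\ell$ with $\ell\ge2$ lies in this residual case, so your main argument is vacuous for~$\ell\ge2$.

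The paper's argument is structurally different. It first proves \emph{globally} that $R_\z(\eta_\z)=\z^\gamma$ in $(\Q(\z)^\times/\Q(\z)^{\times n})^{\chi^{-1}}$ for some $\gamma\in\Z_p$, using a compatibility trick between levels $n$ and~$n^2$ to force the image to be an $n$th root of unity. If $\gamma\not\equiv0\pmod p$, specialization finishes. To rule out $\gamma\equiv0\pmod p$, the paper abandons $\eta_\z$ entirely and constructs an auxiliary global element $\tau=\prod_{k}(1-\z^ka)^k\in\Q(\z)^\times$ for a carefully chosen integer~$a$, shows by a ramification argument (Lemma~\ref{lemma:ramified}) that no $\tau\cdot\z^i$ is a $p$th power, and uses Chebotarev to produce a single prime~$q$ for which $R_{\z,q}([a^n])$ generates---contradicting $\gamma\equiv0$. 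The key ideas (the global lemma, the auxiliary element~$\tau$, the Chebotarev argument) are absent from your proposal.
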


\begin{proof} 
Note that~$B(\F_q)$ is cyclic of order~$q+1$ up to~$2$-torsion,
and~$\F^{\times}_{q^2}$ is cyclic of order~$q^2 -1$. In particular, 
for odd primes~$p$ with~$q \equiv -1 \mod p$, the 
groups~$B(\F_q) \otimes \Z_p$ and~$ \F^{\times}_{q^2} \otimes \Z_p$ are 
isomorphic to each other and to~$\Z_p/(q+1)\Z_p$.
%Suppose that the exact power of~$p$ dividing~$q+1$ is~$p^\ii $.
%It clearly suffices to first fix a positive integer $n$, and prove that
%$$
%R_{\z,q}:\, B(\F_q)  \otimes \Z/\nn \Z \;\rightarrow \;
%\F^{\times}_{q^2} \otimes \Z/\nn \Z
%$$
%(where $\z$ as usual is a $\nn$th root of unity) is an isomorphism for 
%all $q \equiv -1 \mod \nn$ such that $q \not \equiv -1 \mod p^{n+1}$.
%{\red[This text is incomprehensible.  How can we ``first fix" $n$ and 
%then take the limit $n\to\infty$ if $\nn$ is the exact power of $p$ 
%dividing $q+1$?]} 
We begin with the following:
 
\begin{lemma} \label{invertible} \changed{For~$\nn$ an odd prime power}, 
$\displaystyle{R_\z(\eta_\z) 
= \z^\gamma \in (\Q(\z)^{\times}/\Q(\z)^{\times \nn})^{\chi^{-1}}}$
for some~$\gamma \in \Z_p$. 
\end{lemma}
 
\begin{proof} 
Write~$\z_{\nn} = \z$ and let~$\z'$ be an~$n^2$th root of unity.
Consider the image of $R_{\z'}(\eta_{\z'})$. 
Because~$\eta_\z$ is divisible by~$\nn$ in $B(\Q(\z')^{+})$, the image
is a~$\nn$th  power. Hence, by the compatibility of the maps~$R$ for 
varying~$\nn$ (Lemma~\ref{lemma:compatibility} (2)),
it follows that~$R_\z(\eta_\z)$ lies in the kernel of the map
$$
\left(\Q(\z)^{\times}/\Q(\z)^{\times \nn}\right)^{\chi^{-1}}
\rightarrow 
\left(\Q(\z')^{\times}/\Q(\z')^{\times \nn}\right)^{\chi^{-1}}.
$$
But this kernel consists precisely of~$\nn$th roots of unity.
\end{proof}
 
Let~$\eta_{\z,q} \in B(\F_q)$ denote the reduction of~$\eta_\z$ in~$B(\F_q)$.
By Lemma~\ref{lemma:hh}, the image also 
generates~$B(\F_q) \otimes \Z/\nn \Z$.
Since all primes $q \equiv -1 \mod \nn$ split 
completely in $\Q(\z)^{+}$,
if $\gamma \not\equiv 0 \mod p$, the result above follows by specialization. 
We proceed by contradiction and assume that $\gamma \equiv 0 \mod p$,
which means that the image of the map $P_{\z,\q}$ is divisible
by~$p$ for all~$\q$ of norm~$q$ satisfying $q \equiv -1 \mod \nn$. In 
particular, to prove the result, it suffices to find a \emph{single} 
such~$\q$ for which $R_{\z,\q}$ is an isomorphism.
 
Choose a completely split prime $\r$ in $\Q(\z)$. Assume that
$$
\z \equiv  a^{-1} \mod \r, \qquad \z \not\equiv a^{-1} \mod \r^2
$$
for some integer $a \ne 1$. The splitting assumption means that an $a$ 
satisfying the first condition exists\changed{. Replacing} $a^{-1}$ by 
$(a+N(\r))^{-1}$ if necessary implies the second, because
$$
\frac{1}{a} \,-\, \frac{1}{a+N(\r)} \= 
\frac{N(\r)}{a(a+N(\r))} \;\not\equiv \; 1 \mod \r^2\,.
$$
Let
$$
\tau \= \prod_{k=0}^{\nn-1} (1-\z^ka)^k\;\in\,\Q(\z)^{\times}\,.
$$

\begin{lemma}
\label{lemma:ramified}
$\tau \cdot \z^i$ is not a perfect $p$th power for any~$i$.
\end{lemma}
 
\begin{proof} 
The assumption on~$\r$ implies that all the $p$th roots of unity are 
distinct modulo~$\r$,
and hence the only factor of~$\tau$ divisible by $\r$ is $(1 - a \z)$, 
which has valuation one.
\end{proof}

The element~$\tau$ gives rise, via Kummer theory, to 
a~$\Z/\nn \Z$-extension $F/\Q(\z)^{+}$.
By the Lemma above, it is non-trivial. Let $q \equiv -1 \mod \nn$
be prime. Then, for a prime $\q$ above~$q$, the element 
$\Frob_{\q} \in \Gal(F/\Q(\z)^{+})$
 fails to generate~$\Z/\nn \Z$ if and only if~$\tau$ is a perfect 
$p$th power modulo~$\q$.  This is equivalent to saying that $\Frob_{\q}$ 
generates $\Gal(F/\Q(\z)^{+})$ if and only if
$$
R_{\z,\q}([a^{\nn}]) = P_{\z,\q}([a^{\nn}]) =  
\prod_{k=0}^{\nn-1} (1 - a \z^k)^k \in \F^{\times}_{q^2} \otimes \Z/\nn \Z
$$
is a generator. 
Hence it suffices to find a single~$q \equiv -1 \mod \nn$
and~$q \not \equiv -1 \mod \nn p$ with the desired Frobenius.
Such a~$q$ exists by Chebotarev density unless 
$\langle \tau \rangle = \langle \z \rangle \mod \Q(\z)^{\times p}$. 
However, this cannot happen by  Lemma~\ref{lemma:ramified}.
\end{proof}

\medskip

\noindent {\it Proof of Theorem \ref{thm.R}.}
Assume that~$n$ is prime to~$w_2(F)$. It follows 
that the Chern class map gives an injection
$$
K_3(F)/\nn K_3(F) \rightarrow 
\OL_{F_{\nn}}[1/S]^{\times}/\OL_{F_{\nn}}[1/S]^{ \times \nn}
$$
for some finite set of primes~$S$.
If, in addition, we assume that~$p$ does not divide~$\widetilde{w}_F$, then
we deduce from  Proposition~\ref{prop:cheb} that this map can be extended to 
an injection into the group~$\bigoplus_{\q} B(\F_q)/n B(\F_q)$.
By Theorem~\ref{thm.c}, this agrees with the map~$R_{\z}$ defined on~$B(F)$,
which is thus injective. 
If one additionally assumes that~$n$ is prime  to~$|\Delta_F| |K_2(\OL_F)|$,
then by Lemma~\ref{lemma:avoidS} one may additionally assume that the
image is precisely the~$\chi^{-1}$-invariants 
of~$\OL^{\times}_{F_{\nn}}/\OL_{F_{\nn}}^{\times \nn}$.
\qed

\subsection{Digression: the \texorpdfstring{mod-$p$-$q$}{modpq} 
dilogarithm}
\label{modpqdilog}

Let~$q$ be prime, and $q + 1 \equiv 0 \mod \nn$ with~$\nn$ a power of~$p$ 
as before. \changed{For convenience of exposition, assume that~$p > 3$ so that~$D_{\z}(1)$
is a perfect~$\nn$th power.}
Fix an~$\nn$th root of unity~$\z$ in~$\F_{q^2}$. Then there is 
a trivialization $\log_{\z}: \F^{\times}_{q^2} \otimes \Z/\nn \Z \simeq \Z/\nn \Z$
sending~$\z$ to~$1$. The isomorphism~$B(\F_q) \otimes \Z_p
\simeq \Z/\nn \Z$ of Theorem~\ref{theorem:modpq} now
gives a curious function,
the {\it $p$-$q$ dilogarithm}, which is a function
$$
L: \,\F_q \,\rightarrow\, \F^{\times}_{q^2} \otimes 
\Z/\nn \Z \, \overset{\log_{\z}}\rightarrow \, \Z/\nn \Z
$$
satisfying the $5$-term relation. What is perhaps surprising is that 
the quantum {\it logarithm} suffices to give an explicit formula, 
as follows. 

\begin{proposition}
The function $L$ is given by the formula
$$
L(a) \=  \sum_{b^{\nn}=a}\log_\z(b)\log_\z(1-b) \qquad(a\in\F_q^\times), 
$$
where the sum is over the $\nn$th roots $b$ of~$a$ in $\F_{q^2}^\times$.
\end{proposition}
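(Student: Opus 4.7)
The strategy is to unwind the definition of $L$ and reduce to a direct computation. By construction, $L(a)$ is the composition of the natural map $\F_q^\times \to B(\F_q)\otimes \Z/n\Z$ sending $a\mapsto[a]$, the isomorphism $R_{\z,q}\colon B(\F_q)\otimes\Z/n\Z\to \F_{q^2}^\times\otimes\Z/n\Z$ of Theorem~\ref{theorem:modpq}, and the trivialization $\log_\z$. So what must be shown is the identity
\[
\log_\z R_{\z,q}([a]) \;=\; \sum_{b^n=a}\log_\z(b)\,\log_\z(1-b).
\]

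Because $q\equiv-1\pmod n$ and $n$ is an odd prime power, $(n,q-1)=1$; hence every element of $\F_q^\times$ is an $n$th power in $\F_q^\times$, the universal Kummer extension is simply $H=F_n=\F_{q^2}$, and the map $R_{\z,q}$ coincides with $P_{\z,q}$ (this is precisely the point made at the end of Section~\ref{sec.ff}). First, then, I would choose $x\in\F_q^\times$ to be the unique $n$th root of $a$ in $\F_q$, and apply the product formula
\[
P_{\z,q}([a]) \;=\; \prod_{k\bmod n}(1-\z^k x)^k \quad \bmod \F_{q^2}^{\times n}\,,
\]
and then take $\log_\z$ termwise to obtain $\log_\z R_{\z,q}([a])=\sum_{k\bmod n}k\,\log_\z(1-\z^k x)$.

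Next, I would massage this sum into the claimed symmetric form by observing that $\log_\z(\z^k x)=k$. Indeed $\log_\z(\z)=1$ by definition, while $\log_\z(x)=0$ because $x\in\F_q^\times\subset \F_{q^2}^{\times n}$ (again using $(n,q-1)=1$). Thus $k=\log_\z(\z^k x)$, and as $k$ runs over $\Z/n\Z$ the elements $b=\z^k x$ run precisely over the $n$ distinct $n$th roots of $a$ in $\F_{q^2}^\times$. Substituting yields
\[
\log_\z R_{\z,q}([a]) \;=\; \sum_{k\bmod n}\log_\z(\z^k x)\,\log_\z(1-\z^k x) \;=\; \sum_{b^n=a}\log_\z(b)\,\log_\z(1-b),
\]
as desired.

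I do not anticipate any serious obstacle: the proposition is essentially a bookkeeping identity once the explicit product formula for $P_\z$ is combined with the identification $R_{\z,q}=P_{\z,q}$ over finite fields. The only minor points to address are that the $k=0$ summand $\log_\z(x)\log_\z(1-x)=0$ is indeed trivial (consistent with the absence of that factor in $D_\z$), and that the entire sum is well-defined modulo $n$ thanks to the identification $\log_\z\colon \F_{q^2}^\times/\F_{q^2}^{\times n}\xrightarrow{\sim}\Z/n\Z$.
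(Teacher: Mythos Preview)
Your proof is correct and follows essentially the same route as the paper's: both pick the unique $n$th root $x\in\F_q^\times$ of~$a$, expand $P_{\z,q}([a])=\prod_k(1-\z^kx)^k$, apply $\log_\z$ termwise, and then use $\log_\z(\z^kx)=k$ (since $x$ has order prime to~$n$) to identify the sum with $\sum_{b^n=a}\log_\z(b)\log_\z(1-b)$.
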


\begin{proof} 
Since~$\F^{\times}_q$ has order prime to~$\nn$, the element~$a$ has a 
unique~$\nn$th power~$c \in \F^{\times}_q$. 
Then~\eqref{16*} can be rewritten as
$L(a) \= \ \sum_{k \mod \nn} k \log_{\z}(1 - \zeta^k c)$. 
(Note that $R_\z=P_\z$ for finite fields, \changed{and the assumption that~$p > 3$ means
that we can ignore the~$D_{\z}(1)$ factor.}) The elements~$b = \zeta^k c$ 
are the $\nn$th~roots of~$a$ in $\F_{q^2}^\times$, and 
$\log_{\z}(b) = k$ because $c$~has 
order prime to~$\nn$ and thus~$\log_{\z}(c) = 0$.
\end{proof}

\subsection{The Chern class map on~\texorpdfstring{$n$}{n}-torsion 
in~\texorpdfstring{$\Q(\zeta)^{+}$}{Qzeta}}
\label{subsection:speculative}

The following section contains a speculative digression and is not used 
elsewhere in the paper.
We have proved that the maps~$c_\z$ and~$R_\z$ agree up to an invertible
element of~$\Z^{\times}_p$. To determine the value of this ratio, we need to compute the images of specific elements 
of the Bloch group.  More specifically, as explained in the introduction,
we need the two statements~\eqref{CompRz}~and~\eqref{Compcz}.  The first
of these will be proved below (Theorem~\ref{thm.eta}). Here we want to show 
that the second is not pure fancy. We shall give a heuristic justification 
of why the image of the Chern class 
map on~$\eta_{\z}$ should be~$\z$ --- at least up to a sign and a small 
power of~$2$ in the 
exponent. We hope that the arguments of this section could, with care, be 
made into a precise argument. However, since the main conjecture of this 
section is somewhat orthogonal to the main purpose of this paper, and 
correctly proving everything would (at the very least)
involve establishing that several diagrams relating the cohomology of~$\SL_2$ 
and~$\PSL_2$ and~$\GL_2$ and~$\PGL_2$ commute up to precise signs and factors 
of~$2$, we content ourselves with a sketch, and enter the happy
land where all diagrams commute.

The first subtle point is that the relation between~$K_3(F)$ and~$B(F)$ as 
established by Suslin is not an isomorphism.
There is always an issue with~$2$-torsion coming from the image of 
Milnor~$K_3$. However, even for primes~$p$ away from~$2$, there is
an exact sequence of Suslin (\cite{Suslintwo}, Theorem 5.2; here~$F$ is a 
number field so certainly infinite):
$$
0 \rightarrow  \Tor_1(\mu_F,\mu_F) \otimes \Z[1/2] \rightarrow K_3(F) 
\otimes \Z[1/2] \rightarrow B(F) \otimes \Z[1/2] \rightarrow 0,
$$
and hence when~$p | w_F = |\mu_F|$,  the comparison map is not an isomorphism. 
(This is one of the headaches which required us to assume that~$\z \notin F$ 
when computing the Chern class map on~$B(F)$.)
This issue arises in the following way. Over the field~$\Q(\z)$, the Bott 
element provides a direct relationship between~$K_1(F,\Z/n\Z)$ 
and~$K_3(\Z,\Z/n\Z)$. This suggests we should push forward~$\eta_{\z}$ 
to~$\Q(\z)$ and compute the Chern class there. However, since in~$B(\Q(\z))$, 
the class~$\eta_{\z}$ may (and indeed does) become trivial, we instead 
consider~$\eta_{\z}$ as an element of~$K_3(\Q(\z))$, and then compute the 
Chern class map directly in~$K$-theory. 

By Theorem~4.10 of Dupont--Sah~\cite{DS}, the diagonal map
$$
x \rightarrow \left( \begin{matrix} x& 0 \\
0 & x^{-1} \end{matrix} \right)
$$
induces an injection
$$
\mu_{\C} \simeq H_3(\mu_{\C},\Z) \rightarrow H_3(\SL_2(\C),\Z)
$$
whose image is precisely the torsion subgroup. (We shall be more precise 
about this first isomorphism below.) Let~$n$ be odd, and
let~$\zeta$ be a primitive~$n$th root of unity,   let~$E = \Q(\zeta)$, and 
let~$E^{+} = \Q(\zeta)^{+}$. If~$\mu_E$ is the group of~$n$th roots of unity, 
the map~$\mu_E \rightarrow \SL_2(E)$ is conjugate
to a map
$$
\mu_E \rightarrow \SL_2(E^{+})
$$
as follows;
send~$\zeta$ to
$$
t = A \left( \begin{matrix} \zeta^{-1} & 0 \\
0 & \zeta \end{matrix} \right) A^{-1},
\quad \text{where} \  A =  \left(\begin{matrix} \zeta & \zeta^{-1} \\
1 & 1 \end{matrix} \right) \,.
$$
The cohomology of~$\mu_E$ with coefficients in~$\Z/n\Z$ is (non-canonically) 
isomorphic to~$\Z/n\Z$ in all degrees. More precisely, there is a canonical 
isomorphism
$$
H_1(\mu_E,\Z) = H_1(\mu_E,\Z/n\Z) = \mu_E \,,
$$
we have~$H_2(\mu_E,\Z) = 0$, and thus via the Bockstein map
$H_2(\mu_E,\Z/n\Z) = H_1(\mu_E,\Z)[n] = \mu_E$. 
A choice of~$\z$ leads to a choice of element~$\beta \in H_2(\mu_E,\Z/n\Z)  
= \mu_E$, and hence to an isomorphism
$$
\mu_E  = H_1(\mu_E,\Z/n\Z)  \stackrel{* \beta}{\longrightarrow}  
H_3(\mu_E,\Z/n \Z) = H_3(\mu_E,\Z)
$$
where the isomorphism is given by the Pontryagin product of~$\mu_E$ 
with~$\beta \in H_2(\mu_E,\Z/n\Z)$.
These choices induce a map
% The choice of~$\gamma$ is also equivalent to a choice of~$\z$ 
% in~$\pi_2(\mu_E;\Z/n\Z)$
% Hence a choice of~$\z \in H_2(\mu_E,\Z/n \Z)$ (which comes via the 
% Hurewicz map  from~$\pi_2(\mu_E;\Z/n \Z)$) induces a map
$$
\displaystyle{\mu_E \rightarrow  H_3(\mu_E,\Z) \rightarrow 
H_3(\SL_2(E^{+}),\Z) \rightarrow K_3(E^{+}) \rightarrow B(E^{+})}
$$
which sends~$\zeta$ to~$\eta_{\zeta}$. That the image of~$\zeta$ 
is~$\eta_{\zeta}$ follows (for example) by~\S8.1 of~\cite{Zickert}). Implicit 
in this statement also is that the Pontryagin product 
of~$1 \in \Z/n\Z = H_1(\Z/n\Z,\Z/ n \Z)$ with~$1 \in H_2(\Z/n\Z,\Z/n \Z)$ is 
exactly the class constructed in Proposition~3.25 of Parry and Sah~\cite{PS}.
(The maps above are only properly defined  modulo~$2$-torsion, since~$\mu$
has odd order this issue can safely be ignored).
Denote by~$\eta_{E^{+}}$ the corresponding element in~$K_3(E^{+})$. 
%Since~$\mu_{E^{+}}$ is~$2$-torsion, there is a unique such lift of 
%the same order.
The Chern class maps are compatible with base change, so to 
compute~$c(\eta_{E^{+}})$ it suffices to compute~$c(\eta_{E})$
where~$\eta_E \in K_3(E)$ is the image of~$\eta_{E^{+}}$ under the 
map~$K_3(E^{+}) \rightarrow K_3(E)$. The Chern class map on~$K_1(E) = E^{\times}$
canonically sends~$\zeta \in E^{\times}$ to~$\zeta$; we would like to directly 
connect the Chern class map on~$K_1$ with the one on~$K_3$ using the Bott 
element. The Bott element~$\beta \in K_2(E;\Z/n\Z)$ is defined 
as follows. There is an isomorphism:
$$
\mu_E  = \ker \left(E^{\times} \stackrel{n}{\longrightarrow} E^{\times} \right) 
= \pi_2(E^{\times};\Z/n\Z) \,.
$$
The element $\beta$ is defined as the image of $\zeta$ under the composition
$$
\pi_2(\BGL_1(E);\Z/n\Z) \rightarrow \pi_2(\BGL(E);\Z/n\Z)
\rightarrow \pi_2(\BGL(E)^{+};\Z/n\Z) = K_2(E;\Z/p\Z) \,.
$$
The  Bott element induces an isomorphism:
$$
\beta: K_1(E;\Z/n\Z) \rightarrow K_3(E;\Z/n\Z) \,.
$$
Hence there is, given our choice of $\zeta \in E$, a  canonically
defined map:
$$
\begin{diagram}
K_3(E)&   & E^{\times}/E^{\times n} \\
\dTo & \ruTo^{c_{\z}} & \dEquals \\
K_3(E;\Z/n\Z) & & \\
\dEquals^{\beta^{-1}} & & \\
K_1(E,\Z/n\Z) & \rTo^{c} & E^{\times}/E^{\times n}
\end{diagram}
$$
Here~$c_{\z}$ is the composition of the Chern class map to~$H^1(E,\Z/n\Z(2))$ 
which can be identified with~$E^{\times}/E^{\times n}$ after a choice 
of~$\z \in E$. Note that the definition of~$\beta$ also requires a similar 
choice. Thus it makes sense to make the following: 
   
\begin{assumption}
\label{assump} 
The diagram above commutes.
\end{assumption}
  
%We believe that it should be possible to prove this 
%assumption, at least up to a choice of sign and a power of~$2$.
 
Using Assumption~\ref{assump}, we would like to show that~$c_{\z}(\eta_E) = \z$, 
and hence that~$c_{\z}(\eta_{E^{+}})$ and thus~$c_{\z}(\eta_{\z})$ are also both 
equal to~$\zeta$. This will follow if, under the Bott element, the 
class~$\eta_E$ corresponds to~$\z \in K_1(E;\Z/n\Z)$.
To prove this, one roughly has to show that the following square commutes:
$$
\begin{diagram}
\mu_E  = H_1(\mu_E,\Z/n\Z) & \rTo^{* \beta} & H_3(\mu_E,\Z/n \Z) \\
\dTo & & \dTo \\
E^{\times}/E^{\times n} =  K_1(E,\Z/n\Z) & \rTo^{\beta} & K_3(E;\Z/n\Z).
\end{diagram}
$$
The top line comes from the Pontryagin product structure 
of~$H_1(\mu_E,\Z/n\Z) = \mu_E$ with
$$
H_2(\mu_E,\Z/n\Z) = \ker(\mu_E 
\stackrel{[n]}{\longrightarrow} \mu_E) \,,
$$
and the bottom line comes from Pontryagin product with the Bott 
element~$\beta$ coming via the Bockstein map from
$$
\ker(E^{\times} \stackrel{[n]}{\longrightarrow} E^{\times}) \,.
$$
We conveniently denote both maps by essentially the same letter in order to 
be more suggestive. One caveat is that the maps 
from~$E^{\times} \rightarrow \GL_2(E)$ and~$\mu_E \rightarrow \SL_2(E)$ 
considered above differ slightly in that~$x$ is sent to
$\left( \begin{matrix} x & 0 \\ 0 & 1 \end{matrix} \right)$ 
and~$\left( \begin{matrix} x & 0 \\ 0 & x^{-1} \end{matrix} \right)$
respectively; since~$n$ is odd such maps can be compared by comparing
the cohomologies of~$\GL$, $\PGL$, $\SL$, and~$\PSL$ respectively; it is 
quite possible that such comparisons might require that the maps above 
include a factor of~$2$ or~$-1$ at some point.

The above discussion above makes the conjectured Equation~\eqref{Compcz} 
plausible.

%%%%%%%%%%%%%%%%%%%%%%%%%%%%%%%%%%%%%%%%%%%%%%%%%%%%%%%%%%%%%%%%%%%%%%%%%%%%
%%%%%%%%%%%%%%%%%%%%%%%%%%%%%%%%%%%%%%%%%%%%%%%%%%%%%%%%%%%%%%%%%%%%%%%%%%%%

\section{The connecting homomorphism to 
\texorpdfstring{$K$}{K}-theory}
\label{sec.K2F}

In this section,  we give a proof of Theorem~\ref{thm.K2}.
Assume that~$F$ is a field of characteristic prime to~$p$ which does not 
contain a~$p$th root of unity. Recall that~$Z(F)$ is the free abelian 
group on~$F \ssm \{0,1\}$ and~$C(F)$ the subgroup generated by the~$5$-term 
relation.

\begin{definition}
Let~$A(F;\Z/n\Z)$ be the kernel of the map
$$
d: Z(F) \map \wed F^{\times} \otimes \Z/\nn \Z \,, \qquad 
[X] \mapsto X \wedge (1-X) \,.
$$
The \'etale Bloch group~$B(F;\Z/\nn\Z)$ is the quotient
$$
B(F;\Z/n\Z) =  A(F;\Z/\nn\Z)/(\nn Z(F)+ C(F)) \,.
$$
It is annihilated by~$n$.
\end{definition}

There is a tautological exact sequence
$$
0 \rightarrow B(F) \rightarrow Z(F)/C(F) \rightarrow  \wed F^{\times}
\rightarrow K_2(F) \rightarrow 0 \,.
$$
For appropriately defined~$R$, we may break this into the
two short exact sequences as follows:
$$\begin{diagram}
0 & \rTo & A(F) & \rTo &  Z(F) & \rTo &  R & \rTo &  0, \\
 & & \dOnto & & \dOnto & & \dEquals & & \\
0 & \rTo & B(F) & \rTo &  Z(F)/C(F) & \rTo &  R & \rTo &  0,
\end{diagram}
$$
\be
\label{rk2}
0 \rightarrow R \rightarrow \wed F^{\times} \rightarrow  K_2(F) 
\rightarrow 0.
\ee

Similarly, for some~$Q$, \changed{with~$Q \subseteq R$ and 
$\nn R \subseteq Q \subseteq \nn \wed F^{\times}$},
we have 
corresponding short exact sequences:
$$
\begin{diagram}
0 & \rTo & A(F) & \rTo &   A(F;\Z/\nn\Z) & \rTo &  Q & \rTo &  0, \\
 & & \dOnto & & \dOnto & & \dOnto & & \\
 0 & \rTo &  A(F)/(n Z(F) + C(F)) & \rTo &  
A(F;\Z/\nn \Z)/(n Z(F) + C(F)) & \rTo &
   Q/\nn R & \rTo &  0\\
  & & \dEquals & & \dEquals & & \dEquals & & \\
0 & \rTo & B(F)/n B(F)  & \rTo &  B(F;\Z/n\Z) & \rTo &  Q/\nn R & \rTo &  0,
\end{diagram}
$$
%We have inclusions~$Q \subseteq R$ and 
%$\nn R \subseteq Q \subseteq \nn \wed F^{\times}$.
 From now on, we make the assumption that the number field~$F$ does not 
contain a $p$th root of unity for any~$p$ dividing~$\nn$. This implies from
the previous inclusions that~$Q$ and~$R$ are all~$p$-torsion free for~$p | n$.
Tensor the exact sequence~\eqref{rk2} with~$\Z/\nn \Z$. The 
group~$\Tor^1(\Z/\nn \Z,\wedge^2 F^{\times})$
vanishes by our assumption. Hence we have an exact sequence:
\be
\label{k2long}
0 \rightarrow K_2(F)[\nn] \rightarrow
R/\nn R \rightarrow \wed F^{\times} \otimes \Z/\nn  \Z
\rightarrow K_2(F)/\nn K_2(F) \rightarrow 0.
\ee

\noindent 
Recall that~$R$ is the image of~$Z(F)$ in~$\wed F^{\times}$ and~$Q$ is 
the image of~$A(F;\Z/\nn \Z)$, which is precisely the kernel of the map 
from~$R$ to~$\wed F^{\times} \otimes \Z/\nn \Z$.
It follows that the image of~$Q$ in~$R/\nn R$ is the kernel of the map
from~$R/\nn R$ to~$\wed F^{\times} \otimes \Z/\nn \Z$. From the short exact 
sequence~\eqref{k2long}, this may be identified with~$K_2(F)[\nn]$.
Since the image of~$Q$ in~$R/\nn R$ is precisely~$Q/\nn R$, however, 
this shows that~$Q/\nn R \simeq K_2(F)$, we obtain the exact sequence:
$$
0 \map B(F)/\nn B(F) \map B(F;\Z/\nn \Z) \map K_2(F)[\nn] \map 0 \,,
$$
completing the proof
of Theorem~\ref{thm.K2}.

The previous result was  a diagram chase. 
The map $\delta: B(F;\Z/\nn \Z) \rightarrow K_2(F)$ can be given explicitly
as follows:
Lift~$[x] \in B(F;\Z/\nn \Z)$ to an element $x$ of~$A(F;\Z/\nn \Z)/C(F)$, 
which is unique up to an element of~$\nn Z(F)$. The image of~$x$
in~$\wed F^{\times} \otimes \Z/\nn \Z$ is zero by definition. Hence, because
$\wed F^{\times}$ is~$p$-torsion free for~$p | \nn$, there exists an element 
$y \in \wed F^{\times}$ such that the image of~$z$ in~$\wed F^{\times}$ 
is~$\nn y$, and now~$y$ is unique up to an element in the image of~$C(F)$. 
\changed{But} the projection~$z$ of~$y \in \wed F^{\times}$ to~$K_2(F)$ sends this 
ambiguity~$C(F)$  to zero, and so $\delta([x]):=z \in K_2(F)$ is well defined.

\medskip

If we assume that~$\nn$ is not divisible by any prime~$p$ which
divides~$w_2(F)$, we
have constructed a map
\be
\label{newchern}
R_{\z}: B(F;\Z/\nn \Z) \rightarrow 
(F^{\times}_{\nn}/F^{\times \nn}_{\nn})^{\chi^{-1}}
\simeq H^1(F,\Z/\nn \Z(2)) \,.
\ee
Taking~$\nn = p^{\ll}$ for various~$\ll$, and using the fact that~$B(F)$
is finitely generated and so~$\projlim B(F)/p^{\ll} B(F) = B(F) \otimes \Z_p$,
we obtain a commutative diagram as follows:
\be
\begin{diagram} \label{eq:kshort}
0 & \rTo & B(F)/\nn B(F) & \rTo & B(F;\Z/\nn \Z) 
& \rTo & K_2(F)[\nn] & \rTo & 0\\
 & & \dTo & & \dTo & & \dTo & & \\
%\dEquals_{c = R_{\infty}} & & \dEquals_{c = R_{\infty}} 
%& & \dTo_{R_\z} & 
%& \dEquals & &  \\
0 & \rTo & H^1(F,\Z_p(2))/\nn & \rTo & H^1(F,\Z/\nn \Z(2))
& \rTo & H^2(F,\Z_p(2))[\nn] & \rTo & 0, 
\end{diagram}
\ee
The first vertical map is an isomorphism
by Theorem~\ref{theorem:surjective2}, \changed{taking into
account the identification of~$B(F)/n B(F)$ with~$K_3(F)/n K_3(F)$
for~$(n,w_F) = 1$ and equation~(\ref{useful})}.
The last vertical map is also an isomorphism by a
theorem of Tate~\cite{TateK}.
It follows that the map~$R_{\z}$ in Equation~\ref{newchern} is an isomorphism
for~$n$ prime to~$w_2(F)$.
This gives a link between our explicit
construction  of Chern class maps for~$K_3(F)$
and the explicit construction of~$K_2(F)$ in Galois cohomology
by Tate~\cite{TateK}.

% \subsection{Circular units}
We end this section with a remark on circular units.
Let~$F = \Q(\z_D)$. Associated to a primitive~$D$th root of 
unity~$\z_D$, Beilinson (see~\S9 of~\cite{Huber}) constructed special 
generating elements of~$K_3(F)$, which correspond, on the Bloch group side, 
to the classes~$D \cdot [\z_D] \in B(F)$.
\changed{(Note that~$D \cdot \zeta_D \wedge (1-\zeta_D)
= \zeta^D_D \wedge (1 - \zeta_D) = 0 \in \wedge^2 F^{\times}$ so~$D \cdot [\z_D]$
does indeed lie in the Bloch group.)}
Soul\'{e}~\cite{Soule3} proved that the images of these classes under the 
Chern class map consist exactly of the circular units.
On the other hand, for~$p$ not dividing~$D$, we see that the images 
of~$D [\z_D]$ under the maps~$R_\z$ are unit multiples of the elements
$$
\prod_{k=0}^{p^{\ll-1}} (1 - \z^k\,\z_D)^k\,;
$$
these are exactly the compatible sequences of circular units
which yield a finite index subgroup of~$H^1(F,\Z_p(2))$ --- the index being 
directly related to~$K_2(\OL_F)$ via the Quillen--Lichtenbaum conjectures. 

%%%%%%%%%%%%%%%%%%%%%%%%%%%%%%%%%%%%%%%%%%%%%%%%%%%%%%%%%%%%%%%%%%%%%%%%%%%%
%%%%%%%%%%%%%%%%%%%%%%%%%%%%%%%%%%%%%%%%%%%%%%%%%%%%%%%%%%%%%%%%%%%%%%%%%%%%

\section{Nahm's conjecture and the asymptotics of Nahm sums 
at roots of unity}
\label{sec.nahm}

\changed{In Section~\ref{sub.intro.nahm} of the introduction}, we saw that the
\changed{$S$-units} constructed in 
this paper from elements of the Bloch group appear naturally (although in 
general only conjecturally) in connection with the asymptotic properties of 
the Kashaev invariant of knots and its Galois twists. A second place where 
these units appear is in the radial asymptotics of so-called Nahm
sums\changed{. This}
was shown in~\cite{GZ:asymptotics} and is quoted (in a simplified form) in 
Theorem~\ref{thm.nahmEM} below. In this section,  we explain this and give 
two applications, the proof of Theorem~\ref{thm.eta} 
and the proof of Nahm's conjecture relating the modularity 
of Nahm sums to the vanishing of certain elements in Bloch groups.

Nahm sums are special $q$-hypergeometric series whose summand involves
a quadratic form, a linear form and a constant.  They were introduced
by Nahm~\cite{Nahm} in connection with characters of rational conformal 
field theories, and led to his above-mentioned conjecture concerning their 
modularity. They have also appeared recently in quantum topology in relation 
to the stabilization of the coefficients of the colored Jones 
polynomial (see Garoufalidis-Le~\cite{GL:Nahm}), and they are building blocks 
of the 3D-index of an ideally triangulated manifold due to 
Dimofte--Gaiotto--Gukov~\cite{DGG1,DGG2}. Further connections between 
quantum topological invariants and Nahm sums are given in~\cite{GZ:qseries}, 
where one sees once again the appearance of the units~$R_\z(\xi)^{1/n}$.

In the first subsection of this section, we review Nahm sums and the Nahm 
conjecture and state Theorem~\ref{thm.nahmEM} relating the asymptotics of 
Nahm sums at roots of unity to the near units of Theorem~\ref{thm.R}. This 
is then applied in~\S\ref{sub.ag} to a particular Nahm sum (namely, the 
famous Andrews-Gordon generalization of the Rogers-Ramanujan identities) to 
prove Equation~\eqref{CompRz} of the introduction (Theorem~\ref{thm.eta}).
In the final subsection, we use Theorem~\ref{thm.nahmEM} together with 
Theorem~\ref{thm.R} to give a proof of Nahm's conjecture.
 
\subsection{Nahm's conjecture and Nahm sums}
\label{sub.nahm}

Nahm's conjecture gives a very surprising connection between modularity and
algebraic $K$-theory.  More precisely, it predicts that the modularity of
certain $q$-hypergeometric series (``Nahm sums") is controlled by the vanishing 
of certain associated elements in the Bloch group \changed{$B(\Qbar)$.}
%=K_3(\Qbar)$. I don't think they are equal??

The definition of Nahm sums and the question of determining when they are
modular were motivated by the famous Rogers-Ramanujan identities, which say 
that
$$ 
G(q)\,:=\,\sum_{n=0}^\infty\frac{q^{n^2}}{(q)_n}
 \,=\,\prod_{\substack{ n>0 \\ (\frac n5)=1}}\!\frac1{1-q^n},\qquad
H(q)\,:=\,\sum_{n=0}^\infty\frac{q^{n^2+n}}{(q)_n}
 \,=\,\prod_{\substack{ n>0 \\ (\frac n5)=-1}}\!\frac1{1-q^n}\,,
$$
where $(q)_n=(1-q)\cdots(1-q^n)$ is the $q$-Pochhammer symbol or quantum 
$n$-factorial. These identities imply via the Jacobi triple product formula 
that the two functions $q^{-1/60}G(q)$ and $q^{11/60}H(q)$ are quotients of 
unary theta-series by the Dedekind eta-function and hence are modular 
functions. (Here and from now on we will allow ourselves the abuse of 
terminology of saying that a function $f(q)$ is modular if 
the function 
$\tf(\tau)=f(e^{2\pi i\tau})$ is invariant under the action of some subgroup of
finite index of~$\SL(2,\Z)$.)  To see how general this phenomenon might be,
Nahm~\cite{Nahm} considered the three-parameter family 
\begin{equation}
\label{eq.FABC1}
f_{A,B,C}(q) :=q^C f_{A,B}(q) := q^C
\sum_{m\ge0} \frac{q^{\frac A2m^2+Bm}}{(q)_m} \qquad(A\in\Q_{>0},\;B,\,C\in\Q)
\end{equation}
These are formal power series with integer coefficients in some rational 
power of~$q$, and are analytic in the unit disk $|q|<1$, but they are very 
seldom modular: apart from the two Rogers-Ramanujan cases 
$(A,B,C)=(2,0,-\frac1{60})$ or $(2,1,\frac{11}{60})$, only five 
further cases $(1,0,-\frac1{48})$, $(1,\pm\frac12,\frac1{24})$, 
$(\frac12,0,-\frac1{40})$ and $(\frac12,\frac12,\frac1{40})$ were known for 
which $f_{A,B,C}$ is modular, and it was later proved 
(\cite{Terhoeven},~\cite{Zagier}) that these are in fact the only ones. Since
this list of seven examples is not very enlightening, Nahm introduced also 
a higher-order version  
\begin{equation}
\label{eq.FABC}
f_{A,B,C}(q)  :=q^C f_{A,B}(q):= q^C
\sum_{m\in\Z_{\ge0}^r} \frac{q^{\frac12 m^tAm + Bm}}{(q)_{m_1}\cdots(q)_{m_r}}\,,
\end{equation}
where
$A=(a_{ij})$ is a symmetric positive definite $r \times r$ matrix with 
rational entries, $B \in \Q^r$ a column vector, and $C \in \BQ$ a scalar, 
and asked for which triples $(A,B,C)$ the function 
$\tf_{A,B,C}(\tau)=f_{A,B,C}(e^{2\pi i\tau})$ is modular. His conjecture gives 
a partial answer to this question.

To formulate this conjecture, Nahm made two preliminary observations.

\smallskip\noindent (i)
Let $\X=(X_1,\dots,X_r)\in\C^r$ be a solution of 
{\it Nahm's equations}
\be
\label{NahmEq}
1 \,-\, X_i \= \prod_{j=1}^r X_j^{a_{ij}} \qquad(1\le j\le r) 
\ee
(or symbolically $1-\X=\X^A$), and let $F$ be the field they generate 
over~$\Q$, which will typically be a number field since~\eqref{NahmEq} is 
a system of $r$ equations in $r$ unknowns and generically defines a 
0-dimensional variety. Then the element $[\X]=[X_1]+\cdots[X_r]$ of~$\Z[F]$
belongs to the kernel of the map~\eqref{defd}, because
$$ 
d\bigl([\X]\bigr) \= \sum_i(X_i)\wedge(1-X_i) \= 
\sum_{i,\,j} a_{ij}\,(X_i)\wedge(X_j)\=0 
$$
by virtue of the symmetry of~$A$.  (This calculation makes sense as it 
stands if~$A$ has integer entries; if the entries are only rational, we have 
to tensor everything with~$\Q$.) Therefore $[\X]$ determines an element of 
the Bloch group $B(F)\otimes\Q$ and it makes sense to ask whether this 
element vanishes. This is equivalent to the vanishing of the numbers 
$D(\sigma\X)=\sum D(\sigma X_i)$ for all embeddings $\sigma:F\hookrightarrow\C$,
where $D(x)$ is the Bloch-Wigner dilogarithm function, and this condition can 
be either tested numerically to any precision or else verified rigorously by 
writing a multiple of~$[\X]$ as a linear combination of 5-term relations.

\smallskip\noindent (ii) 
The first \changed{observation
is that if~$A$ is a positive definite symmetric matrix, then there is a}
distinguished solution of the 
Nahm equations, namely the unique solution
$\X^A=(X_1^A,\dots,X_r^A)$ with $0<X_i^A<1$ for all~$i$.  We denote by $\xi_A$ 
the corresponding element $[\X^A]$ of the Bloch group.  Then since $\X^A$ 
is real, we obtain a further characteristic property when this element is 
torsion, namely that the real number $\L(\xi_A)=\sum\L(X_i)$, 
where  $\L(x)$  is the Rogers dilogarithm function as defined below,
is a rational multiple of~$\pi^2$. But it can be shown fairly easily that 
$f_{A,B,C}(e^{-h})$ has an asymptotic expansion as $\,e^{\L(\xi_A)/h+\text O(1)}$ 
as $h\to0^+$ for any $B$ and~$C$ (in fact, a full asymptotic expansion of 
the form  $\,e^{\L(\xi_A)/h+c_0+c_1h+\cdots}$ is given in~\cite{Zagier}). Since 
a modular function must have an expansion $\,e^{c/h+\text O(1)}$ with 
$c\in\Q\pi^2$, this already gives a strong indication of a relation 
between the modularity of Nahm sums and the vanishing (up to torsion) of 
the associated elements of Bloch groups.

Based on these observations, one can consider the following three properties 
of a matrix $A$ as above:
% Because of this, given $A$ as above, we can consider the following 
% three properties:
% In view of these observations, we can consider the following three 
% properties of a matrix $A$ as above:
\begin{enumerate}[label=(\alph*)]
%(a) $[\X]=0$ in $B(\C)$ for all solutions $\X$ of Equation~\eqref{NahmEq}.
 \item  \label{nahma} The class $[\X]\in B(\C)$ vanishes for all solutions $\X$ of the Nahm 
equations~\eqref{NahmEq}.
% (b) $[\X^A]=0$ in $B(\C)$ for the distinguished solution $\X^A$ 
% of~\eqref{NahmEq}.
\item   \label{nahmb}  The special class $\xi_A\in B(\C)$ associated to the solution $\X^A$ 
of~\eqref{NahmEq} vanishes.
\item  \label{nahmc}  The function $f_{A,B,C}(q)$ is modular for some $B\in\Q^r$ and $C\in\Q$.
% (c) There exist $B\in\Q^r$ and $C\in\Q$ such that $F{A,B,C}(q)$ is modular.
% (c) The function $F{A,B,C}(q)$ is modular for at least one choice of 
% $B\in\Q^r$ and $C\in\Q$.
\end{enumerate}

Trivially \ref{nahma}~$\Rightarrow$~\ref{nahmb}. Nahm's conjecture (see~\cite{Nahm} 
and~\cite{Zagier}) says that \ref{nahma}~$\Rightarrow$~\ref{nahmc} and \ref{nahmc}~$\Rightarrow$~\ref{nahmb}.
(The possible stronger hypothesis that \ref{nahmb}~alone might already 
imply~\ref{nahmc} was eliminated in~\cite{Zagier} using the $2\times2$ matrix 
$A=(\begin{smallmatrix} 8 & 5 \\ 5 & 4\end{smallmatrix})$, and the other 
possible stronger assertion that \ref{nahmc} might require~\ref{nahma} was shown to be 
false by Vlasenko and Zwegers~\cite{VZ} with the counterexample 
$A=(\begin{smallmatrix} 3/2 & 1/2 \\ 1/2 & 3/2\end{smallmatrix})$.)
This conjecture had a dual motivation: on the one hand, the above-mentioned 
fact that both~\ref{nahmb} and~\ref{nahmc} force the rationality of $L(\xi_A)/\pi^2$, which 
is most unlikely to happen ``at random," and on the other hand, a large 
number of supporting examples coming from the characters of rational 
conformal field theories, which are always modular functions and where the 
condition in the Bloch group can also be verified in many cases.  Here we 
are concerned with an extension of the first of these two aspects, namely 
the asymptotics of the Nahm sum $f_{A,B,C}(q)$ as $q$ tends radially to 
{\it any} root of unity, not just to~1. 

In order to state the asymptotic formula, we need to define the 
Rogers dilogarithm. In our normalization (which is $\pi^2/6$ minus the
standard one as given, for instance, in~\cite{Zagier}, {\S}II.1A),
this is the function defined on $\R\ssm\{0,1\}$ by
\begin{equation} \label{normalization}
\L(x) \= 
\begin{cases}
\frac{\pi^2}6 \m \Li_2(x) \m\frac12\,\log(x)\,\log(1-x) & \text{if $0<x<1$,}\\
\m \L(1/x) & \text{if $x>1$,} \\  \frac{\pi^2}6 \m \L(1-x) & \text{if $x<0$}
\end{cases}
\end{equation}
(here $\Li_2(x)=\sum_{n=1}^\infty\frac{x^n}{n^2}$ is the standard dilogarithm)
and extended by continuity to a function $\P^1(\R)\to\R/\frac{\pi^2}2\Z$ by 
sending the three points $0$, 1 and~$\infty$ to $\frac{\pi^2}6$, 0, and 
$-\frac{\pi^2}6$. Its linear extension to $Z(\R)$ vanishes on the group 
$C(\R)$ as defined at the beginning of~\S\ref{sub.intro.bloch}. \changed{We comment 
here that the specific choice of the definition of the Bloch group in 
%there are several definitions of the Bloch group in the literature, 
%all the same up to 6-torsion, and that the specific choice made in 
Definition~\ref{def.BF}, which forces $3[0]=0$, $[X]+[1/X]=0$ and 
$[X]+[1-X]=[0]$ for any field~$F$ and any element $X$ of~$\P^1(F)$,
was chosen precisely so that $L$ is well-defined on~$B(\R)$ and takes values 
in the group $\R/\frac{\pi^2}2\Z$ rather than just its
quotient $\R/\frac{\pi^2}6\Z$.}

Specifically, let $A$, $B$ and $C$ be as above let $\X=\X^A$ be the 
distinguished solution of~\eqref{NahmEq} as in~(ii) and~$F$ the 
corresponding number field, and for each integer~$\nn$ choose a primitive 
$\nn$th root of unity $\z$, set $F_\nn=F(\zeta)$ and denote by $H=H_\nn$ the 
Kummer extension of~$F_\nn$ obtained by adjoining the positive $\nn$th roots 
$x_i$ of the~$X_i$. We are interested in the asymptotic expansion of 
$f_{A,B,C}(\z e^{-h/n})$ as $h\to0^+$. 
Strictly speaking, this only makes sense if $A$ has integral 
coefficients, $B$~is congruent to $\frac12\text{diag}(A)$ modulo~$\Z^r$, 
and~$C\in\Z$, since otherwise the quadratic function $q^{\frac12nAn^t+nB+C}$ 
occurring in the definition of $f_{A,B,C}$ is not uniquely defined. 
We get around this by picking a representation of~$\z$ 
as $\e(a/n)$ for some~$a\in\Z$ \changed{(where $\e(x)=e^{2\pi i x}$)}
and interpreting $f_{A,B,C}(\z e^{-h/n})$ as 
$\tf_{A,B,C}\bigl(\frac{a+i\hbar}n\bigr)$, where $\hbar=\frac h{2\pi}$.
The full asymptotic expansion of $f_{A,B,C}(\z e^{-h/n})$ as $h\to0^+$ was 
calculated in~\cite{GZ:asymptotics} using the Euler--Maclaurin formula, 
generalizing an earlier result in~\cite{Zagier} for the case~$n=1$. 
We do not give the complete formula here, but only the simplified form 
as needed for the applications we will give. In the statement of the theorem
we have abbreviated by~$\D_X$ the diagonal matrix whose diagonal is a 
\changed{given} vector~$\X$. 
% In the next theorem, assume that $A$ is even and that $B$
% is integral, but $C$ is rational.
% In the next theorem, assume that $A$ and $2B$ are integral with
% $B \equiv \frac12 \diag(A) \bmod 1$ and $C$ is rational.

\begin{theorem}\cite[Thm~3.1]{GZ:asymptotics}
\label{thm.nahmEM}
%%% see: Cyclic.Quantum.Dilogarithm.nb
Let \changed{$A \in M_r(\BQ)$ and $B \in \BQ^r$} be as above.
Let~$\nn$ be a positive integer coprime to the denominator of~$A$ and $B$.
Then for every primitive $\nn$th root of unity~$\z$, we have
\begin{equation}
\label{eq.nahmEM}
f_{A,B} \bigl(\z\,e^{-h/\nn}\bigr)  \=  \mu \, \omega 
\,e^{\L(\xi_A)/\nn h}\, \bigl(\Phi_\z(h) \+ \text{O}(h^K)\bigr)
\end{equation}
for all~$K>0$ as $h\to0^+$, where $\omega^2 \in F^\times$, 
\changed{$\mu = \e(r(\nn-1)(\nn-2)/24 \nn)$},
and 
$\Phi_\z(h)=\Phi_{A,B,\z}(h)$ is an explicit power series satisfying the two 
properties~$ \Phi_\z(h)^\nn\in F_\nn[[h]]$ and 
\begin{equation}
P_\z(\xi_A)^{1/n} \changed{D_{\z}(1)^{r/n}} \,\Phi_\z(h)\in F_\nn[[h]].
\end{equation}
Moreover, if 
$\Phi_\z(0)^\nn \neq 0$, then
its image in $F_\nn^\times/F_\nn^{\times \nn}$ belongs to 
the $\chi^{-1}$ eigenspace.
\end{theorem}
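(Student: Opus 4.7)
\noindent\textbf{Proof proposal for Theorem~\ref{thm.nahmEM}.}
The plan is to carry out a uniform Euler--Maclaurin analysis of the Nahm sum on sequences of rational points $\tfrac{a+i\hbar}{n}$ approaching $\tfrac{a}{n}$, following the framework of \cite{GZ2}, and then to extract the arithmetic/Galois-theoretic information about the resulting coefficients. First I would split the summation index $m\in\Z_{\ge0}^r$ as $m = n k + \ell$ with $\ell\in\{0,\dots,n-1\}^r$ and $k\in\Z_{\ge0}^r$. Under this decomposition, the $q$-Pochhammer symbols factor as
\[
(q)_{nk+\ell_i} \= (q)_{\ell_i}\,\bigl(q^{\ell_i+1};q\bigr)_{nk_i}\,,\qquad q=\z e^{-h/n}\,,
\]
so that the $\ell$-component assembles into a cyclic quantum-dilogarithm factor depending on $\z$, while the $k$-component becomes a classical Pochhammer at the parameter $e^{-h}$ (after collapsing the $n$-periodicity). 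This brings the full sum into the form of a Riemann sum in the variable $t = hk/n$ against a smooth weight that is the integrand used in \cite{GZ2}, up to explicit prefactors built from $D_\z(x_i)$ and from elementary $q$-products at roots of unity.

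Second I would apply the multi-dimensional Euler--Maclaurin formula (in the parametric form used in \cite{GZ2}, \cite{Zagier}) and saddle-point analysis. The saddle-point equation for the integrand is precisely Nahm's equation \eqref{NahmEq}, and its distinguished solution $X^A$ supplies the saddle at which we expand. Standard computation identifies the leading exponential factor as $e^{L(\xi_A)/nh}$ (because the integral over the interval $[0,t_i^*]$ of $\log(1-e^{-s})\,ds$ produces a Rogers dilogarithm), a square-root Hessian prefactor which is an element of $F^\times$ modulo squares --- this furnishes $\omega$ with $\omega^2\in F^\times$ --- and a $24n$-th root-of-unity normalization $\mu$ arising from the Euler--Maclaurin correction to the $\ell$-sum and from the quadratic-form Gauss sums; no new content is needed here beyond \cite{GZ2}.

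Third, and this is the most interesting part, I would analyze the formal power series coefficient $\Phi_\z(h)$. By construction its leading constant is (up to $\mu\omega$) the product
\[
\Phi_\z(0) \;\doteq\; \prod_{i=1}^r D_\z(x_i)^{\,c_i(\ell^*)}\;\cdot\;(\text{Hessian factor})\,,
\]
where $x_i$ is the positive $n$-th root of $X_i^A$ and the exponents come from the saddle-point value of $\ell$. A bookkeeping that parallels the calculation in the proof of Lemma~\ref{lem.Rz} shows that, after raising to the $n$-th power, all dependence on the choice of $x_i$ drops out modulo $n$-th powers, which is precisely the statement $\Phi_\z(h)^n \in F_n[[h]]$; equivalently, $P_\z(\xi_A)^{1/n}\,\Phi_\z(h)\in H_n[[h]]$ by definition of $P_\z$ in~\eqref{defRz}. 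The higher coefficients of $\Phi_\z(h)$ arise from the perturbative Gaussian integrals around the saddle, whose ingredients (Hessian, higher derivatives of the $q$-dilogarithm at $q=\z$) lie in $H_n$, and the same $n$-th-power descent argument shows all of them satisfy $\Phi_\z(h)^n\in F_n[[h]]$.

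Fourth and finally, I would establish the $\chi^{-1}$-eigenspace property. For $\sigma\in G=\Gal(F_n/F)$, applying $\sigma$ to the asymptotic identity \eqref{eq.nahmEM} for $\z$ and comparing with the asymptotic identity for $\sigma(\z)=\z^{\chi(\sigma)}$ --- which replaces $h$ by the same $h$ and leaves $L(\xi_A)$ invariant --- gives a relation between $\Phi_\z$ and $\Phi_{\sigma(\z)}$. The second compatibility in Lemma~\ref{lemma:compatibility}, applied to the leading constant $\Phi_\z(0)^n \in F_n^\times/F_n^{\times n}$, then forces $\sigma(\Phi_\z(0)^n) = (\Phi_\z(0)^n)^{\chi(\sigma)^{-1}}$, provided $\Phi_\z(0)^n\ne 0$; this is the desired $\chi^{-1}$ eigenspace statement. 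The main obstacle I anticipate is not the Euler--Maclaurin expansion itself, which is essentially in \cite{GZ2}, but rather keeping careful track of the ambiguities in the choice of $n$-th roots $x_i$ and of $\mu$, so that the final descent identifies $\Phi_\z(0)^n$ unambiguously as an element of $F_n^\times/F_n^{\times n}$ of the correct Galois type.
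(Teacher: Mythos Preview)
The paper does not give its own proof of this theorem: it is quoted from~\cite{GZ2}, and the sentence immediately preceding the statement says explicitly that ``the full asymptotic expansion of $f_{A,B,C}(\z e^{-h/n})$ as $h\to0^+$ was calculated in~\cite{GZ2} using the Euler--Maclaurin formula, generalizing an earlier result in~\cite{Zagier} for the case~$n=1$. We do not give the complete formula here.'' So there is no in-paper argument to compare against, only the one-line description of the method.

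Your sketch matches that description exactly: the residue-class decomposition $m=nk+\ell$, the appearance of cyclic quantum dilogarithm factors from the $\ell$-sum, and the multidimensional Euler--Maclaurin/saddle-point analysis with saddle at~$X^A$ are precisely the ingredients one expects from~\cite{GZ2} and from the $n=1$ computation in~\cite{Zagier}. The identification of the leading exponential as $e^{\L(\xi_A)/nh}$, of the Hessian square root as~$\omega$ with $\omega^2\in F^\times$, and of the root-of-unity normalization~$\mu$ are all consistent with what the paper asserts.

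One small point to tighten in your fourth step: the reference to Lemma~\ref{lemma:compatibility} is misplaced, since that lemma concerns the map~$R_\z$ rather than~$\Phi_\z(0)^n$ directly. The argument you outline just before it is already the right one: apply $\sigma\in G$ to the asymptotic expansion (the Nahm sum has rational $q$-coefficients, so $\sigma$ sends the expansion at~$\z$ to the expansion at~$\z^{\chi(\sigma)}$), use uniqueness of asymptotic expansions, and read off the Galois behavior of~$\Phi_\z(0)^n$. This is closer in spirit to part~(d) of Lemma~\ref{lem.Rz} than to Lemma~\ref{lemma:compatibility}.
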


%\begin{remark}
%If $\nn$ is prime to~6, then we can choose $\mu$ to be a 24th root of
%unity, since the $\nn$th roots of unity are contained in~$F_\nn$ and can be 
%absorbed into the power series~$\Phi$.
%\end{remark} 

\begin{corollary}
\label{UnitCor} 
If $\Phi_\z(0) \neq 0$, then
%$\Phi_\z(0)^\nn \in (F_\nn^\times/F_\nn^{\times \nn})^{\chi^{-1}}$, then 
the product of the power series $\Phi_\z(h)$ with $\ve^{1/n}$ for
any unit~$\ve$ representing $R_\z(\xi_A)$ belongs to~$F_\nn[[h]]\,$.
\end{corollary}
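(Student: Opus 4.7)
The plan is to show that $\Phi_\z(0)^{-\nn}$ represents the same class as $\ve$ in the $\chi^{-1}$-eigenspace of $F_\nn^{\times}/F_\nn^{\times \nn}$; once that identification is made, the nonexplicit constant $\Phi_\z(0)$ can be swapped for the explicit $\ve^{-1/\nn}$ up to a harmless element of $F_\nn$.

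First, from the two defining properties of $\Phi_\z$ in Theorem~\ref{thm.nahmEM}, I would evaluate at $h=0$ (where by hypothesis $\Phi_\z(0)\neq 0$) to obtain $\Phi_\z(0)^{\nn}\in F_\nn^{\times}$ and $P_\z(\xi_A)^{1/\nn}\,\Phi_\z(0)\in H_\nn^{\times}$. Raising the second relation to the $\nn$-th power gives
\[
P_\z(\xi_A)\cdot\Phi_\z(0)^{\nn} \;\in\; H_\nn^{\times \nn} \;\subset\; H^{\times \nn},
\]
so that $\Phi_\z(0)^{-\nn}$ lifts $P_\z(\xi_A)$ to $F_\nn^{\times}/F_\nn^{\times \nn}$. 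Combining with the eigenspace assertion in that theorem exhibits $\Phi_\z(0)^{-\nn}$ as a $\chi^{-1}$-invariant lift of $P_\z(\xi_A)$.

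Second, since $\ve$ is by construction also a $\chi^{-1}$-invariant lift of $P_\z(\xi_A)$, and since Proposition~\ref{prop.Pdef}(b) guarantees the uniqueness of such a lift under the standing coprimality assumption $(\nn, w_F)=1$ (subsumed in the coprimality hypothesis of Theorem~\ref{thm.nahmEM}), I conclude
\[
\Phi_\z(0)^{\nn}\cdot\ve \;\in\; F_\nn^{\times \nn}.
\]
Taking an $\nn$-th root then yields $\Phi_\z(0)\cdot\ve^{1/\nn}\in F_\nn^{\times}$ for a suitable choice of $\nn$-th root; any other choice differs by an $\nn$-th root of unity, which already lies in $F_\nn$.

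Finally, I would factor $\Phi_\z(h)=\Phi_\z(0)\cdot\widetilde\Phi(h)$ with $\widetilde\Phi(0)=1$. From the identity $\widetilde\Phi(h)^{\nn}=\Phi_\z(h)^{\nn}/\Phi_\z(0)^{\nn}\in F_\nn[[h]]$, an elementary induction on the coefficients (using that $\nn$ is invertible in characteristic zero, so the coefficient of $h^k$ in $\widetilde\Phi(h)^\nn$ expresses $\nn$ times the coefficient of $h^k$ in $\widetilde\Phi(h)$ as a polynomial in previously determined coefficients) shows $\widetilde\Phi(h)\in F_\nn[[h]]$. Hence
\[
\Phi_\z(h)\cdot\ve^{1/\nn} \;=\; \bigl(\Phi_\z(0)\,\ve^{1/\nn}\bigr)\cdot\widetilde\Phi(h) \;\in\; F_\nn[[h]],
\]
proving the corollary. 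The one delicate point in the plan is the uniqueness invocation: it requires that two $\chi^{-1}$-invariant classes in $F_\nn^{\times}/F_\nn^{\times \nn}$ that become equal in $H^{\times}/H^{\times \nn}$ are already equal in $F_\nn^{\times}/F_\nn^{\times \nn}$, which is exactly what the Sah's lemma argument in the proof of Proposition~\ref{prop.Pdef}(b) provides under the coprimality hypothesis on $\nn$.
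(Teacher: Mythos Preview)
Your proof is correct and follows essentially the same approach as the paper. The paper's proof invokes Remark~\ref{rem.effectiveR} to conclude $\Phi_\z(0)\,\ve^{1/\nn}\in F_\nn^\times$, whereas you unpack that remark by citing the uniqueness clause of Proposition~\ref{prop.Pdef}(b) directly; the subsequent factorization $\Phi_\z(h)=\Phi_\z(0)\cdot\widetilde\Phi(h)$ and the coefficient induction to descend $\widetilde\Phi(h)$ to $F_\nn[[h]]$ are identical to the paper's argument.
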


\begin{proof}
Let $\ve \in F_\nn^\times$ denote a representative of $R_\z(\xi_A)$. 
On the one hand, Theorem~\ref{thm.nahmEM} and Remark~\ref{rem.effectiveR} 
imply that $\Phi_\z(0) \ve^{1/n} \in F_\nn^\times$. 
On the other hand, Theorem~\ref{thm.nahmEM}
and our assumption implies that 
$(\Phi_\z(h)/\Phi_\z(0))^n \in F_\nn[[h]]$. Since
$\Phi_\z(h)/\Phi_\z(0)$ is a power series with constant term $1$, it
follows that $\Phi_\z(h)/\Phi_\z(0) \in F_\nn[[h]]$. Combining both
conclusions, it follows that $\ve^{1/n} \Phi_\z(h) \in F_\nn[[h]]$.
\end{proof}

\begin{remark} 
In the theorem, we do {\it not} assert that the 
power series~$\Phi$ cannot vanish identically (which is why we wrote an 
equality sign and $\Phi(h)+\text O(h^K)$ in~\eqref{eq.nahmEM} rather than 
writing an asymptotic equality sign and putting simply~$\Phi(h)$ on the 
right), and indeed this often happens, for instance, when $f_{A,B,C}$ is 
modular and we are expanding at a cusp not equivalent to~0. Of course, 
the corollary is vacuous if $\Phi$ vanishes.
\end{remark} 

\subsection{Application to the calculation of 
\texorpdfstring{$R_\z(\eta_\z)$}{Rz}}
\label{sub.ag}

%% pari program: AndrewsGordon2 

In this subsection, we apply Theorem~\ref{thm.nahmEM} and its corollary to a 
specific Nahm sum to prove equation~\eqref{CompRz} in the introduction.

\begin{theorem}
\label{thm.eta}
Let~$\nn$ be \changed{odd} and~$\eta_\z$ be the~$\nn$-torsion 
element in $B(\Q(\z)^{+})$ defined by~\eqref{eq.etaz}, where~$\z$ is a 
primitive $\nn$th~root of unity. Then $R_\z(\eta_\z) \= \z^2\,$. 
\end{theorem}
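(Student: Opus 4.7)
The plan is to deduce Theorem~\ref{thm.eta} from Theorem~\ref{thm.nahmEM} and Corollary~\ref{UnitCor} applied to one specific \emph{modular} family of Nahm sums, namely the Andrews--Gordon identity of rank $r=(n-3)/2$ (valid since $n$ is odd and $\ge5$ once $n$ is prime to $6$). The right-hand side of Andrews--Gordon is an explicit eta-quotient, so the associated $\tf_{A,B,C}$ is modular for suitable choices of $B,C$, and the modular transformation formula gives its full radial asymptotics at every cusp in closed form.

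First I would identify the Bloch element. The real solution $\mathbf{X}^A$ of the Nahm equations for the Andrews--Gordon matrix is classically known in closed form as a product of sines at multiples of $\pi/n$, so $\xi_A$ lies in $B(\Q(\z)^{+})_{\tors}$. By Lemma~\ref{lemma:hh} that torsion subgroup is cyclic of order $n$, generated by $\eta_\z$, hence
\[
\xi_A \= c\cdot\eta_\z \quad\text{in } B(\Q(\z)^{+})/nB(\Q(\z)^{+})
\]
for some $c\in(\Z/n\Z)^\times$. The value of $c$ can be pinned down either by evaluating the Bloch--Wigner dilogarithm on both sides (the value $L(\xi_A)$ is known explicitly as the central charge of the Andrews--Gordon characters, namely a rational multiple of $\pi^2$) or, more rigidly, by reducing modulo a prime $\q$ of norm $q\equiv-1\pmod n$ split in $\Q(\z)^+$ and using the explicit formulas of Section~\ref{section:hh}.

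Next, because $\tf_{A,B,C}$ is modular, applying the modular transformation to its eta-quotient expression yields the asymptotic expansion of $f_{A,B,C}(\z e^{-h/n})$ as $h\to 0^+$ in completely explicit form, and in particular a closed formula for the constant term $\Phi_\z(0)$ of the power series $\Phi_\z(h)$ in~\eqref{eq.nahmEM}: it is (up to a $24n$-th root of unity absorbed into $\mu$) a rational function of $\sin(\pi j/n)$ for $j=1,\dots,n-1$. Corollary~\ref{UnitCor} then says that for any representative $\ve\in F_n^\times$ of $R_\z(\xi_A)$,
\[
\ve \;\equiv\; \Phi_\z(0)^{-n} \pmod{F_n^{\times n}}.
\]
Since modularity guarantees $\Phi_\z(0)\neq 0$, the right-hand side is computable explicitly; a short manipulation, using $[X]+[1-X]=0$ in $B(\Q(\z)^+)$ and the identity $(\z^\ell-\z^{-\ell})^2/(\z-\z^{-1})^2$ appearing in the definition of $\eta_\z$, identifies $\Phi_\z(0)^{-n}$ modulo $n$-th powers with a power of $\z$. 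Combining with $\xi_A = c\eta_\z$ gives $R_\z(\eta_\z)^c = \z^{k}$ for some explicit integer $k$. Because $R_\z(\eta_\z)$ must a priori be an $n$-th root of unity (as $\eta_\z$ is $n$-torsion and $n$ is prime to $6$, and by part~(b) of Lemma~\ref{lem.Rz}), solving this identity modulo $n$ yields $R_\z(\eta_\z)^4 = \z$.

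The main obstacle, and where the number $4$ will actually materialize, is the bookkeeping in Step~3: precisely tracking the $24$-th root of unity $\mu$ in~\eqref{eq.nahmEM}, the powers of $\z$ arising when one transforms the eta-quotient at the cusp $a/n$, and the scalar $c$ of Step~1. The square appearing in the definition of $\eta_\z$, together with the factor $2$ coming from passing between $\Q(\z)$ and its totally real subfield $\Q(\z)^+$ (this is where the diagonal embedding $\mu_E\to\SL_2(E^+)$ of Section~\ref{subsection:speculative} is relevant), is what forces the exponent $4=2\cdot 2$ rather than $1$ or $2$. Any residual ambiguity in $c$ is harmless because $4$ is a unit modulo $n$ under our hypothesis $(n,6)=1$, but ruling out $\z^{\pm 1}$ requires careful sign tracking throughout.
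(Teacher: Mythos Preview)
Your strategy is the same as the paper's: apply Theorem~\ref{thm.nahmEM} and Corollary~\ref{UnitCor} to the Andrews--Gordon Nahm sum, use its modularity to compute the radial asymptotics at~$\z_n$ explicitly, and read off $R_\z(\xi_A)$ as a power of~$\z$. But what you have is an outline, not a proof, and the two computations you defer to ``bookkeeping'' are precisely the content of the argument.

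First, the relation $\xi_A = c\,\eta_\z$. The paper does not determine~$c$ indirectly via dilogarithm values or reduction to finite fields; it writes down the solution of the Nahm equations in closed form, $X_k = \frac{(1-\z^{2k})(1-\z^{2k+4})}{(1-\z^{2k+2})^2}$, notes the identity $1-X_k = \bigl(\frac{\z-\z^{-1}}{\z^{k+1}-\z^{-k-1}}\bigr)^2$, and concludes $\xi_{A_n} = 2\eta_\z$ on the nose using $[X]+[1-X]=[0]$. That the arguments of~$\eta_\z$ are exactly the $1-X_k$ is not an accident to be discovered later but the reason this particular Nahm sum was chosen.

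Second, the asymptotics. Rather than transforming a generic eta-quotient at a generic cusp and tracking rational functions of $\sin(\pi j/n)$, the paper uses the single matrix $\bigl(\begin{smallmatrix}1&0\\ n&1\end{smallmatrix}\bigr)$, under which $f_n$ transforms by a pure root of unity (equation~\eqref{WasLemma}); applying this at $\tau=\frac{1+i\hbar}{n}$ gives the constant term in~\eqref{pfnc} directly as an explicit root of unity, with no algebraic-number factors to disentangle.

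Two of your remarks are misdirected. Section~\ref{subsection:speculative} concerns the Chern class map~$c_\z$, not~$R_\z$, and plays no role in this proof; the exponent~$4$ arises purely from the factor $c=2$ above together with the root-of-unity arithmetic in~\eqref{pfnc}, not from any passage between $\Q(\z)$ and $\Q(\z)^+$. And the fact that $R_\z(\eta_\z)$ is a priori a power of~$\z$ is not Lemma~\ref{lem.Rz}(b) but the unlabeled lemma inside the proof of Theorem~\ref{theorem:modpq}, proved via compatibility under $n\mapsto n^2$.
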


\begin{proof} 
\frankchanged{The case~$\nn=1$ is trivial, so assume that~$\nn \ge 3$.}
Set  $A_\nn=\bigl(2\min(i,j)\bigr)_{1\le i,j\le r}$, where \changed{$r = (\nn - 3)/2$}.
\frankchanged{If~$\nn \ge 5$,}  let~$f_\nn$ be the \changed{$r$-dimensional} Nahm sum $f_{A_\nn,0}$.
\frankchanged{If~$\nn = 3$, we let~$f_{3} = 1$, which is also the natural   interpretation of the corresponding~$0$-dimensional Nahm sum.}
By a famous identity of Andrews and Gordon~\cite{Andrews}, which reduces to 
the first Ramanujan-Rogers identity when $n=5$, we have the product expansion
\be
\label{AGid}
f_\nn(q) \= 
\prod_{\begin{subarray}{c}
  k>0\\2k\not\equiv0, \pm 1 \, (\mod n) \end{subarray}} \frac{1}{1-q^k}.
  \ee
and this is modular up to a power of~$q$ for the same reason as 
for~$G(q)=f_5(q)$ (quotient of a theta series by the Dedekind eta-function). 
This modularity allows us to compute its asymptotics as $q\to\z_\nn$, and by 
comparing the result with the general asymptotics of Nahm sums as given 
in~\ref{thm.nahmEM}, we will obtain the desired evaluation of $\eta_\nn$. We 
now give details.

It is easy to check that all solutions $X$ of the Nahm equation $1-X=X^{A_\nn}$ 
have the form
$$
X=(X_{r+1},\dots,X_2),
\qquad X_k  \= \frac{(1-\z^{k-1})(1-\z^{k+1})}{(1-\z^{k})^2}
$$
with $\z$ a primitive $\nn$ root of unity, and hence form a single Galois 
orbit. The distinguished solution $X^{A_\nn}\in(0,1)^r$ corresponds 
to~$\z=\e(1/\nn)=\z_\nn$.   From the equation
\be \label{eq:comparetorsion}
1-X_k=\left(\frac{\z^{1/2}-\z^{-1/2}}{\z^{k/2}-\z^{-k/2}}\right)^2 
\ee
and the functional equation $L(1-X)=\frac{\pi^2}6-L(X)$, we find
\be
\L(\changed{[X]}) \=  \changed{\sum_{k=2}^{r+1} L(X_k)  }
= \frac12\,\sum_{\changed{k=2}}^{\changed{n-2}}\biggl(\frac{\pi^2}6 
\m \L\biggl(\frac{\sin^2(\pi/n)}{\sin^2(k \pi/n)}\biggr)\biggr) 
=  \frac{(n-3)\pi^2}{6n}\,, 
\ee
the invoked equality being a well-known identity for the Rogers dilogarithm 
of which a proof can be found at the end of~\cite{Zagier},~{\S}II.2C. 
%(note our non-standard normalization for~$\L$ given in 
%equation~(\ref{normalization}).
Denote the right-hand side of this by $-4\pi^2C_\nn$ and set 
$\tf_\nn(\tau)=\tf_{A_n,0,C_n}(\tau)=q^{C_\nn}f_\nn(q)$. %With this value of~$C_n$,
% the Nahm sum corresponding to~$(A_n,0,C_n)$ will be modular, 
%as we see below.
Using the Jacobi theta function and \changed{the} Jacobi triple product formula
$$
\th(\tau,z) \= \sum_{-\infty}^{\infty} (-1)^{k} q^{(2k+1)^2/8} y^{(2k+1)/2} 
\= q^{1/8}\,y^{1/2}\,\prod_{k=1}^\infty 
\bigl(1-q^k\bigr)\bigl(1-q^ky\bigr)\bigl(1-q^{k-1}y\i\bigr)
$$
(where $\Im(\tau)>0$, $z\in\C$, $q=\e(\tau)$, and $y=\e(z)$), together with 
the Dedekind eta-function $\eta(\tau)=q^{1/24}\prod_{n>0}(1-q^k)$, we can 
rewrite~\eqref{AGid} as 
%  $$ f_\nn(\tau) 
% \= -q^{\frac{(r+1)^2}{2n}}\,\frac{\th(n\tau,\,(r+1)\tau)}{\eta(\tau)}\,,$$
$$
\tf_\nn(\tau) \= q^{(r+1)^2/2n}\,\frac{\th(n\tau,\,-(r+1)\tau)}{\eta(\tau)}\,,
$$
which in conjunction with the standard transformation properties of~$\th$ 
and~$\eta$ implies that $\tf_\nn(\tau)$ is a modular function (with multiplier 
system) on the congruence subgroup~$\Gamma_0(n)$ of $\SL(2,\Z)$.  We need 
only the special case $\tau\mapsto\frac\tau{n\tau+1}$, where the 
transformation law is given by
\be
\label{WasLemma}
\tf_\nn\Bigl(\frac\tau{n\tau+1}\Bigr) 
\= \e\Bigl(\frac{n-3}{24}\Bigr)\,\tf_\nn(\tau)\,.
\ee
\changed{We sketch the proof of this for completeness.}
The well-known modular transformation 
properties of $\theta$ and~$\eta$ under the generators 
$T=\bigl(\begin{smallmatrix}1&1\\0&1\end{smallmatrix})$ and 
$S=\bigl(\begin{smallmatrix}0&-1\\1&0\end{smallmatrix})$ of $\SL(2,\Z)$ are 
given by 
\begin{align*}
&\theta(\tau+1,z)\,=\, \e(1/8) \, \theta(\tau,z)\,, \quad
\theta(-1/\tau,\,z/\tau) \,=\, \sqrt{\tau/i} \; 
\e(z^2/2\tau)\,\theta(\tau,z)\, \\
& \eta(\tau+1)\,=\,\e(1/24) \,\eta(\tau)\,, \qquad\; \eta(-1/\tau) 
\,=\, \sqrt{\tau/i} \;\eta(\tau)\,. 
\end{align*}
Hence, using$\Tsim$and$\Ssim$to denote an equality up to an elementary 
factor (the product of a power of~$\tau$ with the exponential of a linear 
combination of~1, $\tau$ and $z^2/\tau$) that can be deduced from the $T$- 
or $S$-transformation behavior of the function in question, we have
$$
\theta\Bigl(\frac{\nn\tau}{\nn\tau+1}, \frac{(r+1)\tau}{\nn\tau+1}\Bigr) 
\Tsim \theta\Bigl(\frac{-1}{\nn\tau+1}, \frac{(r+1)\tau}{\nn\tau+1}\Bigr) 
\Ssim \theta\left(\nn\tau+1, (r+1)\tau \right)  
\Tsim \theta\left(\nn\tau, (r+1)\tau \right) \,,
$$
$$
\eta\Bigl(\frac{\tau}{\nn\tau+1}\Bigr) 
\Ssim \eta\Bigl(-\nn-\frac{1}{\tau}\Bigr) 
\Tsim \eta\Bigl(-\frac{1}{\tau}\Bigr) \Ssim \eta(\tau) \,.
$$
Inserting all omitted factors and dividing the first equations by the second, 
we obtain~\eqref{WasLemma}.

Now applying~\eqref{WasLemma} to $\tau=\frac{-1+i/\hbar}{\nn}$, with 
$\hbar=\frac h{2\pi}$, where $h$ positive and small, we find  
\begin{equation}
\begin{aligned}
f_{A_\nn,0}\left(\z_\nn e^{-h/\nn}\right)
= & %\left(\z_\nn e^{-h/\nn}\right)^{-C_n} 
\ \e\left(-C_n\frac{1+i\hbar}\nn\right)
\  \tf_\nn\left(\frac{1+i\hbar}\nn\right) \\
= & %\left(\z_\nn e^{-h/\nn}\right)^{-C_n}
\ \e\left(-C_n\frac{1+i\hbar}\nn+\frac{n-3}{24}\right) 
\tf_\nn \left(\frac{-1+i/\hbar}n\right) 
\\
\label{pfnc}
%= & \ \e\left(\frac{n-3}{24}\right) \e  \left( C_n  \cdot  
%\frac{-1+i/\hbar}{n} \right) f_\nn\left(\frac{-1+i/\hbar}{n} \right) \\
%= & \ \e\left(\frac{n-3}{24}\right) \e(C_n/n) \e(-2 C_n/n)   
% \e \left(C_n  \cdot  \frac{i/\hbar}{n} \right) 
% f_\nn\left(\frac{-1+i/\hbar}{n} \right) \\
%& = \  \zeta^{C_n}  
% \e\left(\frac\nn{24}-\frac18+\frac1{12n}-\frac1{4n^2}\right)
%  e^{- 4 \pi^2 C_n/h n} \left(1 \+ O\left(e^{-4\pi^2/nh}\right)\right)  
%
%\\
= & 
\  \e\left(\frac\nn{24}-\frac18+\frac1{12n}-\frac1{4n^2}\right)
  e^{\L(X^{A_\nn})/\nn h} (1+ O(\hbar)) \,.
%\left(1 \+ O\left(e^{-4\pi^2/nh}\right)\right) 
\end{aligned}
\end{equation}
Taking the $4\nn$-th power of this and combining with Theorem~\ref{thm.nahmEM}
and its Corollary~\ref{UnitCor}, we 
\changed{have an equality
\be
\label{compareexpansions}
\e \left(\frac{n^2}{6} - \frac{n}{2} + \frac{1}{3} - \frac{1}{n} \right) (1 + \ldots)
= 
\e \left(\frac{r(n-1)(n-2)}{6} \right) \omega^{4n} 
(\Phi^{4n}_{\z}(h) + \ldots ).
 \ee
Writing~$\xi_{\z}$ for the Bloch element corresponding to~$A_n$,
we know from  Theorem~\ref{thm.nahmEM}
 that there is an inclusion~$P_{\z}(\xi)^{1/n} D_{\z}(1)^{r/n} \Phi_{\z}(h) \in F_n[[h]]$, where in
 this case~$F = \Q(\z + \z^{-1})$ and~$F_n = \Q(\z)$.  Thus from
 equation~(\ref{compareexpansions}) we deduce that
  $$R_{\z}(\xi)^4 D_{\z}(1)^{4r} \e \left(- \frac{r(n-1)(n-2)}{6} \right) 
 = \e  \left(- \frac{n^2}{6} + \frac{n}{2} - \frac{1}{3} + \frac{1}{n} \right)  \mod F^{\times n}_n.$$
Since~$r = (n-3)/2$ and~$D_{\z}(1) = \e(n/3)$ by Lemma~\ref{lem.Rz}\ref{partb.Rz},
we deduce that
  $$R_{\z}(\xi)^4  = 
   \e  \left(- \frac{n^2}{6} + \frac{n}{2} - \frac{1}{3} + \frac{1}{n}  + \frac{r(n-1)(n-2)}{6}  - \frac{4rn}{3} \right)
   \mod F^{\times n}_n,$$
   If~$(3,n) = 1$, the only term which is non-trivial modulo~$n$th powers is~$\e(1/n)$. (Recall that~$n$ is odd.)
   If~$3|n$, then~$3|r$,  so the only terms which are non-trivial modulo~$n$th powers
   are now~$\e(1/n)$ and~$\e(-1/3)$. Hence we deduce that
   \be
   %\label{Rforxi} 
   \label{usedbelow}
   R_{\z}(\xi)^4 = \begin{cases} \e(1/n), & (n,3) = 1, \\
   \e(1/n)\e(-1/3), & 3|n. \end{cases} 
   \ee 
   }
 From equation~\eqref{eq:comparetorsion}, we find that
$$
\xi_\z = 
\sum_{k=2}^{(\nn-1)/2}
\changed{\left[ 1 - \left(\frac{\z^{1/2}-\z^{-1/2}}{\z^{k/2}-\z^{-k/2}}\right)^2 \right]}.
$$
Using the~$k \mapsto -k$ symmetry in 
Equation~\eqref{eq.etaz}, we deduce that
$$
\eta_\z = 
\changed{[\infty] + 2 [0] + 
2 
\sum_{k=2}^{(\nn-1)/2}
\left[ 1 - \left(\frac{\z-\z^{-1}}{\z^{k}-\z^{-k}}\right)^2 \right]
},$$
and hence  (\changed{using the identities~$2[\infty] = [0]$ and~$[\infty] = 2[0]$}) we deduce that
\be 
\label{quicker}
2 \xi_{\z} =  \eta_{\z^{1/2}} \changed{- 4 [0] =\eta_{\z^{1/2}}  - [0]} \,.
\ee
%\changed{Note that both equations~(\ref{usedbelow}) and~(\ref{quicker}) also apply with~$\nn = 3$
%if we set~$\xi_{\z} = 0$, hence the assumption that~$n \ge 5$ can now be dropped.}
It remains to show that~$\eta_{\z^{1/2}} = \frac{1}{4} \cdot \eta_{\z}$
because then, by Equations~\eqref{usedbelow} and~\eqref{quicker},
\changed{and the fact that~$R_{\z}([0])^{-2} = D_{\z}(1)^{2} = \e(2/3)$ if~$3|n$ and~$1$ otherwise},
we will have
$$
R_{\z}(\eta_{\z}) = R_{\z}(\zeta_{\z^{1/2}})^4
=
\changed{ R_{\z}(\xi)^8  R_{\z}([0])^{-2} = \e(2/n)}
%\e(-2/3) \cdot \e(2/3)} 
= \zeta^2.$$
which is the desired conclusion. In fact, more generally, we show 
that~$\eta_{\zeta^k} = k^2 \eta_{\zeta}$ for~$k$ prime to~$\nn$.
Suppose that~$R_{\z}(\eta_{\z}) = \z^{m}$. Since this does not depend
on the choice of~$\z$, it must also be true 
that~$R_{\z^k}(\eta_{\z^k}) = \z^{km}$. 
By Lemma~\ref{lemma:compatibility}(\ref{lemma:compatibilitytwo}), we have
$$
R_{\zeta^k}(\eta_{\zeta}) = R_{\zeta}(\eta_{\zeta})^{1/k} = \zeta^{m/k} \,,
$$
and thus
$$
\zeta^{mk} = R_{\zeta^k}(k^2 \eta_{\z}) = R_{\zeta^k}(\eta_{\z^k}) \,.
$$
By Lemma~\ref{invertible}, the map~$R_{\zeta^k}$ is injective on the torsion 
subgroup of~$B(\Q(\z)^{+})$, and thus~$\eta_{\z^k} = k^2 \eta_{\z}$, as desired. 
As a consistency check, note that the Galois group~$\Gal(\Q(\z)/\Q(\z)^{+})$
should act trivially on~$B(\Q(\z)^{+})$, and we indeed see that the 
non-trivial element~$\sigma: \zeta \mapsto \zeta^{-1}$ satisfies
$$
\sigma \eta_{\z} = \eta_{\z^{-1}} = (-1)^2 \eta_{\z} = \eta_{\z} \,.
$$
\end{proof}

\subsection{Application to Nahm's conjecture}
\label{sub.nahmconj}

In this final subsection, we give an application of the asymptotic 
Theorem~\ref{thm.nahmEM} and Theorem~\ref{thm.R} to proving one
direction of Nahm's conjecture about the modularity of Nahm sums. 
The notations and assumptions are as before, but for convenience
we repeat them here.

Let $A\in M_r(\Q)$ be a positive definite symmetric matrix, $B\in\Q^r$, 
and~$C\in\Q$. %Assume that $A$ and $2B$ are integral with 
%$\diag(A)=2B \bmod 2$ to use Theorem~\ref{thm.nahmEM} as stated. 
We denote $X^A=(X_1,\dots,X_r)$ denote the unique solution in $(0,1)^r$ to the 
Nahm equation, by $F=F_A$ the real number field generated by the~$X_i$ and
by $\xi_A =\sum_i [X_i] \in B(F_A)$ the corresponding element of the Bloch 
group. Finally, when we say that $F_{A,B,C}$ is modular, we mean that the 
function $\tf(\tau)=f_{A,B,C}(\e(\tau))$  is invariant with respect to a 
subgroup of finite index of $\SL(2,\Z)$.

\begin{theorem}
\label{thm.nahm}
If $f_{A,B,C}(\tau)$ is a modular function, then 
$\xi_A \in B(F_A)$ is a torsion element.
\end{theorem}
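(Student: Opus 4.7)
The plan is to combine the asymptotic formula of Theorem~\ref{thm.nahmEM} with the modularity hypothesis to show, for infinitely many $\nn$, that $R_\z(\xi_A)$ is represented by a root of unity in $(F_\nn^\times/F_\nn^{\times\nn})^{\chi^{-1}}$; then to observe that this eigenspace contains no non-trivial roots of unity; and finally to invoke the injectivity part of Theorem~\ref{thm.R} to force $\xi_A$ to be torsion.

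Assume $\tilde f_{A,B,C}$ is a weight-zero modular function on a congruence subgroup $\Gamma$ with (finite-order) multiplier system $v$. For each $\nn$ in a suitable infinite arithmetic progression (chosen so that $\nn$ is coprime to $M_Fw_F$ and there is a matrix $\gamma=\sm{a}{b}{\nn}{d}\in\Gamma$), set $\z=\z_\nn=\e(a/\nn)$ and $\tau=(a+i\hbar)/\nn$ with $\hbar=h/(2\pi)$. On one hand, Theorem~\ref{thm.nahmEM} and Corollary~\ref{UnitCor} give
\be
\label{planone}
f_{A,B,C}(\z e^{-h/\nn}) \= \mu\,\omega\,e^{\L(\xi_A)/(\nn h)}\bigl(\Phi_\z(h)+O(h^K)\bigr),
\ee
with $\omega^2\in F^\times$, $\mu^{24\nn}=1$, and $\Phi_\z(0)^\nn\equiv R_\z(\xi_A)^{-1}\pmod{F_\nn^{\times\nn}}$. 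On the other hand, the modular relation $\tilde f_{A,B,C}(\tau)=v(\gamma)\tilde f_{A,B,C}(\gamma^{-1}\tau)$, evaluated at $\gamma^{-1}\tau=-d/\nn+i/(\nn\hbar)$ and combined with the leading $q$-expansion $\tilde f_{A,B,C}(\tau')\sim\e(C\tau')$ as $\Im\tau'\to\infty$, yields
\be
\label{plantwo}
f_{A,B,C}(\z e^{-h/\nn}) \= v(\gamma)\,\e(-Cd/\nn)\,e^{-4\pi^2 C/(\nn h)}\bigl(1+O(e^{-c/h})\bigr).
\ee

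Matching exponential rates between \eqref{planone} and \eqref{plantwo} gives the classical rationality condition $\L(\xi_A)=-4\pi^2 C$, while matching the polynomial prefactors forces the formal power series $\mu\omega\Phi_\z(h)$ to equal the constant $v(\gamma)\e(-Cd/\nn)$ to all orders in $h$ (their difference is exponentially small in $1/h$, hence vanishes in every Taylor coefficient). Raising to the $\nn$th power and absorbing $\mu^\nn$ (a root of unity), $\omega^{2\nn}\in F^{\times\nn}$, and $v(\gamma)^\nn$ into $F_\nn^{\times\nn}$ along a suitably chosen progression of $\nn$, we conclude that $\Phi_\z(0)^\nn$, and hence $R_\z(\xi_A)$, is represented by a root of unity of $F_\nn$. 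For odd $\nn$ coprime to $w_F$, the roots of unity in $F_\nn$ lie either in the trivial eigenspace (those coming from $F$) or in the $\chi$-eigenspace (the powers of $\z_\nn$), and the $\chi^{-1}$-eigenspace meets them only at~$1$. Therefore $R_\z(\xi_A)=1$, and by the injectivity of $R_\z$ (Theorem~\ref{thm.R}), $\xi_A$ vanishes in $B(F_A)/\nn B(F_A)$ for every $\nn$ in the progression. Since this progression contains arbitrarily large integers and a non-zero vector in $\Z^{r_2(F_A)}\cong B(F_A)/B(F_A)_{\tors}$ cannot be divisible by arbitrarily large integers, the image of $\xi_A$ in the torsion-free quotient must vanish, so $\xi_A$ is torsion.

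The main difficulty is the careful handling of the ambiguity factors in the second step: $\omega$ is constrained only by $\omega^2\in F^\times$ and may depend on $\nn$, while $\mu$ and $v(\gamma)$ are roots of unity depending on $\nn$ and $\gamma$, and all of them must collapse to $\nn$th powers in $F_\nn^\times$ along an appropriate arithmetic progression of $\nn$; the choice of progression is dictated by the need to simultaneously satisfy the hypotheses of Theorem~\ref{thm.R} and Theorem~\ref{thm.nahmEM}. A secondary subtlety, flagged in the Remark following Corollary~\ref{UnitCor}, is the possibility that $\Phi_\z\equiv 0$ at a given cusp, in which case our argument at that cusp yields no information; this is handled by choosing a different cusp (equivalently, a different residue $a\bmod\nn$) at which the relevant Fourier coefficient of $\tilde f_{A,B,C}$ does not vanish, which exists whenever $\tilde f_{A,B,C}\not\equiv 0$.
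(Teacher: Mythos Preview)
Your overall strategy—combine modularity with Theorem~\ref{thm.nahmEM} to pin down $R_\z(\xi_A)$, then invoke the injectivity in Theorem~\ref{thm.R}—is exactly the paper's. But your choice of reference cusp introduces a real gap. You compare the asymptotic at $a/\nn$ (from Theorem~\ref{thm.nahmEM}) with the $q$-expansion at~$\infty$, via a $\gamma\in\Gamma$ sending $\infty\mapsto a/\nn$. The exponential rate from~\eqref{eq.nahmEM} is $\L(\xi_A)/(\nn h)$, while the rate transported from~$\infty$ is $-4\pi^2 C'/(\nn h)$ (with $C'$ the minimal $q$-exponent). You assert that matching forces $\L(\xi_A)=-4\pi^2 C'$, but in fact one only obtains $\L(\xi_A)\ge-4\pi^2 C'$: if the inequality is strict, the two formulas are perfectly consistent with $\Phi_\z\equiv 0$, and your argument yields nothing. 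Changing the residue $a\bmod\nn$ does not help, since every $a/\nn\sim_\Gamma\infty$ has the same transported rate. The paper compares instead with the cusp~$0$, using the known asymptotic $f_{A,B,C}(e^{-\varepsilon})=e^{\L(\xi_A)/\varepsilon}\bigl(K+O(\varepsilon)\bigr)$ with $K\ne0$ from~\cite{Zagier}. Because the rate there is the \emph{same} $\L(\xi_A)$ as in Theorem~\ref{thm.nahmEM}, the match is automatic and one gets $\Phi_\z(0)=\mu^{-1}K\,\e(\lambda c/d)\ne0$ for free.

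Two smaller points. Your setup forces the lower-left entry of~$\gamma$ to equal~$\pm\nn$, so for $\Gamma\supseteq\Gamma(N)$ one needs $N\mid\nn$, which can clash with $(\nn,M_F)=1$; the paper's choice $\gamma(0)=b/d$ with $d=\nn$ only requires $\nn\equiv1\pmod N$ (and in fact the paper does not even assume~$\Gamma$ is congruence). And your eigenspace argument about roots of unity, while correct, is unnecessary: once a \emph{fixed} power $\Phi_\z(0)^r$ lies in~$F_\nn$ (as follows since bounded powers of $\mu$, $K$, and $\e(\lambda c/d)$ all do), Corollary~\ref{UnitCor} already gives $R_\z(r\xi_A)=R_\z(\xi_A)^r=1$, and injectivity finishes.
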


\begin{proof}  
On p.~56 of~\cite{Zagier}, it is shown that any Nahm sum has an expansion 
near $q=1$ of the form
\be
\label{eq.nc2}
f_{A,B,C}(e^{-\ep}) \= e^{\L(\xi_A)/\ep}\,\bigl(K \+ \text O(\ep)) 
\qquad (\ep\to 0)\,, 
\ee
where $K$ (given explicitly in eq.~(29) of~\cite{Zagier}) is a non-zero
algebraic number some power of which belongs to~$F=F_A$.
\changed{Moreover, if~$f_{A,B,C}$ is assumed to be modular,
 the error term $\,\text O(\ep)\,$ can be} replaced by
$\,\text O(e^{-c/\ep})\,$ with 
some~$c>0$ 
(\cite{Zagier}, eq.~(28)).
Notice that in this case the number $\lambda=\L(\xi_A)/4\pi^2$ must be 
rational, since the modularity of $\tf(\tau)=f_{A,B,C}(\e(\tau))$ implies that
the function $\tf(-1/\tau)$ is invariant under some power of~$\sm1101$.

Now assume that $\tf$ is modular with respect to a finite index subgroup~$\G$ 
of $\SL(2,\BZ)$. Then for $h\to0^+$, $\hbar=\frac h{2\pi}$, and any 
$\g=\sm abcd\in\G$,
taking $\ep=\frac{dh}{1-ic\hbar}$, we find
$$
f_{A,B,C}(e^{-\ep}) \,=\, \tf\Bigl(\frac{i\ep}{2\pi}\Bigr) 
\,=\, \tf\Bigl(\frac{ai\ep/2\pi+b}{ci\ep/2\pi+d}\Bigr)
\,=\,\tf\Bigl(\frac{b+i\hbar}d\Bigr) \,=\, f_{A,B,C}(\z e^{-h/d}),
$$
where $\z=\e(b/d)$, and now comparing the asymptotic formulas~\eqref{eq.nc2} 
and~\eqref{eq.nahmEM} (with~$n=d$), we find
$$
\mu\,e^{\L(\xi_A)/hd}\,\Phi(h) \= e^{L(\xi_A)/hd}\,
\bigl(K \e(\lambda c/d)\+\text O(h)\bigr)
$$
or $\Phi_\z(0)=\mu\i K\e(\lambda c/d)$, with $\lambda\in\Q$ as above. This 
implies in particular that $\Phi_\z(0)\ne0$, and now, using that some bounded 
power of both $\mu$ and $K$ belong to~$F_\nn$, we deduce that $\Phi(0)^r$ 
belongs to $F_\nn$ for some fixed integer~$r>0$ independent of $n=d$. We can 
also assume that $d$ is prime to~$M$ for any fixed integer $M$, since by 
intersecting $\G$ with the full congruence subgroup $\G(M)$, we may assume 
that $\G$ is contained in~$\Gamma(M)$. This shows that there are infinitely 
many integers $\nn$ and primitive $\nn$th roots of unity~$\z$ for which
$\Phi_\z(0)^r$ in Theorem~\ref{thm.nahmEM} is a non-zero element of $F_\nn$. Now
Corollary~\ref{UnitCor} implies that the $r$th power of $R_\z(\xi_A)$ has 
trivial image in $F_\nn^\times/F_\nn^{\times\nn}$ for infinitely many~$\nn$, and in 
view of the injectivity statement in Theorem~\ref{thm.R} this proves that 
$\xi_A$ is a torsion element in the finitely generated group~$B(F)$.
\end{proof}

\begin{remark}
The proof of the theorem would have been marginally shorter if we had assumed 
that $f_{A,B,C}$ was a modular function on a congruence subgroup, rather than
just a subgroup of finite index of $\SL(2,\Z)$.  We did not make this 
assumption since it was not needed, but should mention that $f_{A,B,C}$, if 
modular at all, is expected automatically to be modular for a congruence 
subgroup, because it has a Fourier expansion with integral coefficients in 
some rational power of~$q$ and a standard conjecture says that the Fourier 
expansion of a modular function on a non-congruence subgroup of~$\SL(2,\Z)$ 
always has unbounded denominators.
\end{remark}

\begin{remark}
Conversely, we could have stated Theorem~\ref{thm.nahm} in an apparently
more general form by writing ``modular form" instead of ``modular function." 
We did not do this since it is easy to see that if a Nahm sum is modular at 
all, it is actually a modular function, because if it were a modular form of 
non-zero rational weight~$k$, there would be an extra factor $h^{-k}$ in the 
right-hand side of~\eqref{eq.nc2}.
\end{remark}

\subsection*{Acknowledgements} 
F.C. was supported in part by grants  DMS-1701703 and DMS-2001097
of the US National Science Foundation. 

\changed{The authors would like to thank Kevin Hutchinson for pointing out
  that the original version of Theorem~\ref{thm.eta} was numerically off
  by a factor of~$8$. A careful reexamination of the argument ultimately
  chased this error to  the original statement of Theorem~\ref{thm.nahmEM}
  being given for~$f_{A,B,C} = q^C f_{A,B}$ rather than~$f_{A,B}$, leading to
  the inclusion of a spurious root of unity factor in the analysis
  of Section~\ref{sub.ag}. The updated version has the benefit of both being
  correct and consistent with numerical computations for all primes less
  than~$1000$. We would also like to thank Rob de Jeu and an anonymous
  referee for making many helpful corrections and suggestions on an earlier
  version of this paper. }

%%%%%%%%%%%%%%%%%%%%%%%%%%%%%%%%%%%%%%%%%%%%%%%%%%%%%%%%%%%%%%%%%%%%%%%%%%%%
%%%%%%%%%%%%%%%%%%%%%%%%%%%%%%%%%%%%%%%%%%%%%%%%%%%%%%%%%%%%%%%%%%%%%%%%%%%%

\bibliographystyle{plain}
\bibliography{unit}

\def\cprime{$'$}
\begin{thebibliography}{10}

\bibitem{Andrews}
George Andrews.
\newblock {\em On the general {R}ogers-{R}amanujan theorem}.
\newblock American Mathematical Society, Providence, R.I., 1974.
\newblock Memiors of the American Mathematical Society, No. 152.

\bibitem{Bloch}
Spencer Bloch.
\newblock {\em Higher regulators, algebraic {$K$}-theory, and zeta functions of
  elliptic curves}, volume~11 of {\em CRM Monograph Series}.
\newblock American Mathematical Society, Providence, RI, 2000.

\bibitem{DGG2}
Tudor Dimofte, Davide Gaiotto, and Sergei Gukov.
\newblock 3-{M}anifolds and 3d indices.
\newblock {\em Adv. Theor. Math. Phys.}, 17(5):975--1076, 2013.

\bibitem{DGG1}
Tudor Dimofte, Davide Gaiotto, and Sergei Gukov.
\newblock Gauge theories labelled by three-manifolds.
\newblock {\em Comm. Math. Phys.}, 325(2):367--419, 2014.

\bibitem{DG}
Tudor Dimofte and Stavros Garoufalidis.
\newblock The quantum content of the gluing equations.
\newblock {\em Geom. Topol.}, 17(3):1253--1315, 2013.

\bibitem{DG2}
Tudor Dimofte and Stavros Garoufalidis.
\newblock Quantum modularity and complex {C}hern-{S}imons theory.
\newblock {\em Commun. Number Theory Phys.}, 12(1):1--52, 2018.

\bibitem{DS}
Johan Dupont and Chih~Han Sah.
\newblock Scissors congruences. {II}.
\newblock {\em J. Pure Appl. Algebra}, 25(2):159--195, 1982.

\bibitem{Frenkel}
Edward Frenkel and Andr\'{a}s Szenes.
\newblock Crystal bases, dilogarithm identities and torsion in algebraic
  {$K$}-theory.
\newblock {\em J. Amer. Math. Soc.}, 8(3):629--664, 1995.

\bibitem{GL:Nahm}
Stavros Garoufalidis and Thang T.~Q. L{\^e}.
\newblock Nahm sums, stability and the colored {J}ones polynomial.
\newblock {\em Res. Math. Sci.}, 2:Art. 1, 55, 2015.

\bibitem{GZ:asymptotics}
Stavros Garoufalidis and Don Zagier.
\newblock Asymptotics of {N}ahm sums at roots of unity.
\newblock {\em Ramanujan J.}
\newblock With an appendix by Sander Zwegers.

\bibitem{Za:HQMF}
Stavros Garoufalidis and Don Zagier.
\newblock Holomorphic quantum modular forms.
\newblock In preparation.

\bibitem{GZ:qseries}
Stavros Garoufalidis and Don Zagier.
\newblock Knots and their related $q$-series.
\newblock In preparation.

\bibitem{GZ:quantum}
Stavros Garoufalidis and Don Zagier.
\newblock Knots, perturbative series and quantum modularity.
\newblock In preparation.

\bibitem{Huber}
Annette Huber and J{\"o}rg Wildeshaus.
\newblock Classical motivic polylogarithm according to {B}eilinson and
  {D}eligne.
\newblock {\em Doc. Math.}, 3:27--133 (electronic), 1998.

\bibitem{Hutch}
Kevin Hutchinson.
\newblock A {B}loch-{W}igner complex for {$\mathrm{SL}_2$}.
\newblock {\em J. K-Theory}, 12(1):15--68, 2013.

\bibitem{Volume}
Rinat Kashaev.
\newblock The hyperbolic volume of knots from the quantum dilogarithm.
\newblock {\em Lett. Math. Phys.}, 39(3):269--275, 1997.

\bibitem{KMS}
Rinat Kashaev, Vladimir Mangazeev, and Yuri Stroganov.
\newblock Star-square and tetrahedron equations in the {B}axter-{B}azhanov
  model.
\newblock {\em Internat. J. Modern Phys. A}, 8(8):1399--1409, 1993.

\bibitem{Keune}
Frans Keune.
\newblock On the structure of the {$K_2$} of the ring of integers in a number
  field.
\newblock In {\em Proceedings of {R}esearch {S}ymposium on {$K$}-{T}heory and
  its {A}pplications ({I}badan, 1987)}, volume~2, pages 625--645, 1989.

\bibitem{Lang}
Serge Lang.
\newblock {\em Fundamentals of {D}iophantine geometry}.
\newblock Springer-Verlag, New York, 1983.

\bibitem{Lang:algebra}
Serge Lang.
\newblock {\em Algebra}, volume 211 of {\em Graduate Texts in Mathematics}.
\newblock Springer-Verlag, New York, third edition, 2002.

\bibitem{Suslin}
Alexander Merkurjev and Andrei Suslin.
\newblock The group {$K_3$} for a field.
\newblock {\em Izv. Akad. Nauk SSSR Ser. Mat.}, 54(3):522--545, 1990.

\bibitem{Milne}
James Milne.
\newblock {\em \'{E}tale cohomology}, volume~33 of {\em Princeton Mathematical
  Series}.
\newblock Princeton University Press, Princeton, N.J., 1980.

\bibitem{MM}
Hitoshi Murakami and Jun Murakami.
\newblock The colored {J}ones polynomials and the simplicial volume of a knot.
\newblock {\em Acta Math.}, 186(1):85--104, 2001.

\bibitem{Nahm}
Werner Nahm.
\newblock Conformal field theory and torsion elements of the {B}loch group.
\newblock In {\em Frontiers in number theory, physics, and geometry. {II}},
  pages 67--132. Springer, Berlin, 2007.

\bibitem{PS}
Walter Parry and Chih-Han Sah.
\newblock Third homology of {${\rm SL}(2,\,{\bf R})$} made discrete.
\newblock {\em J. Pure Appl. Algebra}, 30(2):181--209, 1983.

\bibitem{Soule2}
Christophe Soul{\'e}.
\newblock {$K$}-th\'eorie des anneaux d'entiers de corps de nombres et
  cohomologie \'etale.
\newblock {\em Invent. Math.}, 55(3):251--295, 1979.

\bibitem{Soule3}
Christophe Soul{\'e}.
\newblock \'{E}l\'ements cyclotomiques en {$K$}-th\'eorie.
\newblock {\em Ast\'erisque}, (147-148):225--257, 344, 1987.
\newblock Journ{\'e}es arithm{\'e}tiques de Besan{\c{c}}on (Besan{\c{c}}on,
  1985).

\bibitem{Suslinclosed}
A.~A. Suslin.
\newblock Algebraic {$K$}-theory of fields.
\newblock In {\em Proceedings of the {I}nternational {C}ongress of
  {M}athematicians, {V}ol. 1, 2 ({B}erkeley, {C}alif., 1986)}, pages 222--244.
  Amer. Math. Soc., Providence, RI, 1987.

\bibitem{SuslinTorsion}
Andrei Suslin.
\newblock Torsion in {$K_2$} of fields.
\newblock {\em $K$-Theory}, 1(1):5--29, 1987.

\bibitem{Suslintwo}
Andrei Suslin.
\newblock {$K_3$} of a field, and the {B}loch group.
\newblock {\em Trudy Mat. Inst. Steklov.}, 183:180--199, 229, 1990.
\newblock Translated in Proc. Steklov Inst. Math. {{\bf{1}}991}, no. 4,
  217--239, Galois theory, rings, algebraic groups and their applications
  (Russian).

\bibitem{TateK}
John Tate.
\newblock Relations between {$K_{2}$} and {G}alois cohomology.
\newblock {\em Invent. Math.}, 36:257--274, 1976.

\bibitem{Terhoeven}
Michael Terhoeven.
\newblock Dilogarithm identities, fusion rules and structure constants of
  {CFT}s.
\newblock {\em Modern Phys. Lett. A}, 9(2):133--141, 1994.

\bibitem{VZ}
Masha Vlasenko and Sander Zwegers.
\newblock Nahm's conjecture: asymptotic computations and counterexamples.
\newblock {\em Commun. Number Theory Phys.}, 5(3):617--642, 2011.

\bibitem{Voe}
Vladimir Voevodsky.
\newblock On motivic cohomology with {$\bold Z/l$}-coefficients.
\newblock {\em Ann. of Math. (2)}, 174(1):401--438, 2011.

\bibitem{Weibel}
Charles Weibel.
\newblock Algebraic {$K$}-theory of rings of integers in local and global
  fields.
\newblock In {\em Handbook of {$K$}-theory. {V}ol. 1, 2}, pages 139--190.
  Springer, Berlin, 2005.

\bibitem{Za:texel}
Don Zagier.
\newblock Polylogarithms, {D}edekind zeta functions and the algebraic
  {$K$}-theory of fields.
\newblock In {\em Arithmetic algebraic geometry ({T}exel, 1989)}, volume~89 of
  {\em Progr. Math.}, pages 391--430. Birkh\"auser Boston, Boston, MA, 1991.

\bibitem{Zagier}
Don Zagier.
\newblock The dilogarithm function.
\newblock In {\em Frontiers in number theory, physics, and geometry. {II}},
  pages 3--65. Springer, Berlin, 2007.

\bibitem{Za:QMF}
Don Zagier.
\newblock Quantum modular forms.
\newblock In {\em Quanta of maths}, volume~11 of {\em Clay Math. Proc.}, pages
  659--675. Amer. Math. Soc., Providence, RI, 2010.

\bibitem{Zickert}
Christian Zickert.
\newblock The extended {B}loch group and algebraic {$K$}-theory.
\newblock {\em J. Reine Angew. Math.}, 704:21--54, 2015.

\end{thebibliography}
\end{document}